\numberwithin{equation}{section}
\newtheorem{thm}{Theorem}[section]
\newtheorem{prop}[thm]{Proposition}
\newtheorem{lem}[thm]{Lemma}
\newtheorem{conj}[thm]{Conjecture}
\newtheorem{rmq}[thm]{Remark}
\newcommand{\R}{\mathbb{R}}
\numberwithin{equation}{section}
\newcommand{\be}{\begin{equation}}
\newcommand{\ee}{\end{equation}}
\newcommand{\e}{\varepsilon}
\newcounter{exercice}
\def\Bcal{\mathcal{B}}
\title[Radial blow-up for the Lin-Ni-Takagi problem]{Classification of radial blow-up at the first critical exponent for the Lin-Ni-Takagi problem in the ball}
\begin{document}

\thanks{B. P. was supported by the FNRS CdR grant J.0135.19 and the Fonds Th\'elam. D.B. and B.P. are supported by an ARC Avanc\'e 2020 at ULB. J.-B.C was supported by FCT - Funda\c c\~ao para a Ci\^encia e a Tecnologia, under the project: UIDB/04561/2020. D.B. is Francqui Research Professor 2021-2024. He thanks the Fondation Francqui for the support.}

\author[Denis Bonheure]{Denis Bonheure}
\address{D\'epartement de Math\'ematique, Universit\'e libre de Bruxelles, Campus de la Plaine CP
213, Bd. du Triomphe, 1050 Bruxelles, Belgium}
\email{denis.bonheure@ulb.be}
\email{Bruno.Premoselli@ulb.be}
\author[Jean-Baptiste Casteras]{Jean-Baptiste Casteras}
\address{CMAFcIO, Faculdade de Ci\^encias da Universidade de Lisboa, Edificio C6, Piso 1, Campo Grande 1749-016 Lisboa, Portugal}
\email{jeanbaptiste.casteras@gmail.com}
\author[Bruno Premoselli]{Bruno Premoselli}
%\address{D\'epartement de Math\'ematique, Universit\'e libre de Bruxelles, Campus de la Plaine CP
%213, Bd. du Triomphe, 1050 Bruxelles, Belgium}

\begin{abstract}
We investigate the behaviour of radial solutions to the Lin-Ni-Takagi problem in the ball $B_R \subset \R^N$ for $N \ge 3$:
\begin{equation*}
\left \{ \begin{aligned}
- \triangle u_p + u_p & = |u_p|^{p-2}u_p & \textrm{ in } B_R, \\
\partial_\nu u_p & = 0 & \textrm{ on } \partial B_R,
\end{aligned} \right.
\end{equation*}
 when $p $ is close to the first critical Sobolev exponent $2^* = \frac{2N}{N-2}$. We obtain a complete classification of finite energy radial smooth blowing up solutions to this problem. We describe the conditions preventing blow-up as $p \to 2^*$, we give the necessary conditions in order for blow-up to occur and we establish their sharpness by constructing examples of blowing up sequences. Our approach allows for asymptotically supercritical values of $p$. We show in particular that, if $p \geq 2^\ast$, finite-energy radial solutions are precompact in $C^2(\overline{B_R})$ provided that $N\geq 7$. 
Sufficient conditions are also given in smaller dimensions if $p=2^\ast$. 
Finally we compare and interpret our results to the bifurcation analysis of Bonheure, Grumiau and Troestler in Nonlinear Anal. 147 (2016). 
\end{abstract}

\maketitle

\section{Introduction}

\subsection{Statement of the results}

Let $R>0$ and $B_R=B(0,R)$ be the ball in $\R^N$, $N \ge 3$. We investigate the behaviour of families $(u_p)_{p>2}$ of radial solutions to the so-called Lin-Ni-Takagi problem in $B_R$:
\begin{equation} \label{LNT}
\left \{ \begin{aligned}
- \triangle u_p + u_p & = |u_p|^{p-2}u_p & \textrm{ in } B_R \\
\partial_\nu u_p & = 0 & \textrm{ in } \partial B_R.
\end{aligned} \right.
\end{equation} 
This equation can be derived from the Patlak-Keller-Segel system
\begin{equation}\label{kssys}
\begin{cases}
\dfrac{\partial u}{\partial t}=\Delta u - D_1\nabla \cdot (u\nabla \phi(v)) \quad &\text{in } B_R ,\\[4mm]  
\dfrac{\partial v}{\partial t}=D_2 \Delta v-D_3 v+D_4 u \quad &\text{in } B_R ,\\[2mm]
\partial_\nu u=\partial_\nu v=0 \quad &\text{on } \partial B_R, \\
u,v>0 &\text{in } B_R ,
\end{cases}
\end{equation}
where $D_i$, $i=1,\ldots ,4$ are positive constants and $\phi$ is a smooth strictly increasing function. This system has been introduced in \cite{KellerSegel} to model chemotaxis, a situation where organisms move towards areas of high concentration of the chemicals they secrete. In this model $u(x,t)$ represents the concentration of the considered organisms and $v(x,t)$ the one of the chemical released. A huge literature exists on the dynamics of this system, concerning several fondamental questions such as the Cauchy theory, blow-up in finite or infinite time and stability to cite a few of them. Since it is not our purpose to analyse the time-depending solutions, we just refer to some recent papers \cite{MR3936129,arxiv.1911.12417,arxiv.1912.00721,MR4179771} and the many citations therein to trace the most important results in this direction. A large variety of highly nontrivial equilibria have been detected since the seminal work \cite{MR929196}, see for instance  
\cite{MR1923818,MR2124160,MR2296306,DruetRobertWei,MR3073256,MR3262455,MR3304582,MR3564729,MR3527634,MR3487266,MR3625083,MR3641921,MR4299895}. This list of references is far from being exhaustive. 
One can prove that stationary positive solutions satisfy
$$\nabla\cdot (u \nabla (\log u - D_1 \phi (v))=0, $$
which, together with the boundary conditions, implies that $u=C e^{D_1 \phi (v)}$ for some positive constant $C$. Taking $\phi (v)=\ln v$ one recovers \eqref{LNT} for a suitable choice of parameters $D_i$ and $C$. The Lin-Ni-Takagi problem has been intensively studied these last decades. We refer to \cite{MR929196,MR974610,MR1106295,MR1103293,MR1480241,ReyWei,MR2114411,MR2645043,MR2806101,DruetRobertWei} for a non-exhaustive list of existence and non-existence results for \eqref{LNT}, related to the so-called Lin-Ni conjecture, and mostly concerning the close-to-critical case. % $p\approx2^\ast$.
As mentioned by several experts of the community, the role of the many highlighted equilibria, as actually the role of any stationary solution, in the dynamics of the parabolic problem \eqref{kssys}, seems unclear up to now. However, it is to expect that loss of compactness by bubbling in the stationary frame plays a role in the dynamics of \eqref{kssys}. This issue has been investigated in other time-depending equations, as for instance in \cite{MR2431658,MR2494455,MR4054257,MR4046015,MR4108414,MR4307216,MR4157676} and the included citations.

\medskip

We will be interested in this work in the blow-up behaviour of radial solutions of \eqref{LNT} at the first critical exponent $p = 2^*$, where we have let $2^* = \frac{2N}{N-2}$. Our main goal in this paper is to provide a complete description of the radial blow-up behaviour of \eqref{LNT} when $p \to 2^*$. This means giving sufficient conditions preventing the blow-up; describing \emph{a priori} necessary condition in order for blow-up to occur; and constructing, when the necessary conditions are satisfied, radial blowing up sequences of solutions of \eqref{LNT}. Our investigations are motivated, among other things, by the bifurcation analysis in \cite{MR3564729}, for which we refer to Section \ref{motivation} below. 

\medskip

For simplicity we will work, in the following, up to a subsequence: we let $(p_k)_{k\ge0}$ be a sequence of positive real numbers converging towards $2^*$ as $k \to + \infty$ and we let $(u_k)_{k \ge0}$ be a sequence of \emph{possibly sign-changing} radial solutions of the following equation:
\begin{equation} \label{LNTk}
\left \{ \begin{aligned}
- \triangle u_k + u_k & = |u_k|^{p_k-2}u_k & \textrm{ in } B_R, \\
\partial_\nu u_k & = 0 & \textrm{ in } \partial B_R,
\end{aligned} \right.
\end{equation} 
which is just \eqref{LNT} with $p = p_k$. 
We will always work with solutions that are \emph{energy-bounded}  in the following sense: we assume that there exists $C >0$ such that 
\begin{equation} \label{energy}
\Vert u_k \Vert_{L^{\alpha_k}(B_R)} \le C, 
\end{equation} 
for all $k \ge 0$, where 
\[ \alpha_k = \max \big( 2^*, \frac{N}{2}p_k - N \big). \]
If $p_k \le 2^*$, then $\alpha_k = 2^*$ and \eqref{energy} implies that each $u_k$ is smooth by the classical argument of \cite{Trudinger} (see also \cite{BrezisKato}). 
Observe that in this case \eqref{energy} is equivalent to a $H^1(B_R)$ bound on the sequence $(u_k)_{k \ge0}$. When $p_k>2^*$ we have $\alpha_k > p_k$ and thus assumption \eqref{energy} is stronger than a uniform bound on the $H^1(B_R)$ norm. The exponent $\alpha_k$ appears however naturally since it defines the precise norm which is preserved by the natural scaling that is formally associated to \eqref{LNT} : given $p>2$, the $L^\alpha(\R^N)$ norm of $\lambda^{\frac{2}{p-2}}u(\lambda x)$ is preserved if and only if $\alpha =\frac{N}{2}(p-2)$. Such an $\alpha$ is also the exact exponent for which a natural bootstrap procedure in \eqref{LNT} does not improve the integrability of a solution. Observe also that when $p_k>2^*$ in \eqref{LNTk}, the smoothness of the solutions is not guaranteed solely by \eqref{energy} anymore. We thus assume in addition, if $p_k > 2^*$, that $u_k$ is smooth, by which we mean $C^2(\overline{B_R})$,  for any $k \ge 0$. Smooth radial solutions of \eqref{LNT} for all $p$ have for instance been constructed in \cite{MR2124160,MR2765510,MR2948289,MR3056708,MR3564729}. Singular solutions have been constructed in \cite{MR3408072,MR4149344} in the supercritical case.

\medskip

We say that a sequence of smooth, possibly sign-changing, radial solutions $(u_k)_{\ge0}$ of \eqref{LNTk} blows up if
$$\lim_{k \to + \infty} \Vert u_k \Vert_{L^\infty(B_R)} = + \infty. $$
Since we deal with energy bounded sequences, Strauss lemma \cite{Strauss} equivalently implies 
$$\lim_{k \to + \infty} |u_k(0)| = + \infty. $$
Standard elliptic theory and \eqref{LNTk} show that if $(u_k)_{k \ge0}$ does not blow up, then it is precompact in the $C^2(\overline{B_R})$ topology. Our first result is a precompactness theorem that states that if the $p_k$'s approach the critical exponent $2^*$ from above, then an energy-bounded sequence of solutions $(u_k)_k$ of \eqref{LNTk} never blows up in dimensions $N \ge 7$. 

\begin{thm}[Critical or supercritical case in dimensions $N\ge 7$] \label{thstabi-Nge7}\mbox{ } \\
Assume $N \ge 7$ and let $(p_k)_{k \ge0}$ be a sequence of positive real numbers such that $p_k \ge 2^*$ for all $k \ge0$ and $\lim_{k \to + \infty} p_k = 2^*$.
Any sequence $(u_k)_{k \ge0}$ of $C^2(\overline{B_R})$ radial solutions of \eqref{LNTk} satisfying \eqref{energy} is precompact in $C^2(\overline{B_R})$.
\end{thm}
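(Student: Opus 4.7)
My plan is to argue by contradiction: suppose the sequence is not precompact in $C^2(\overline{B_R})$. Standard elliptic theory combined with the bound \eqref{energy} then forces $\|u_k\|_{L^\infty(B_R)}\to+\infty$, and the radial Strauss-type lemma, together with \eqref{energy}, localizes the concentration of $|u_k|$ at finitely many radii. I first treat the model case where concentration occurs at the origin, $|u_k(0)|\to+\infty$; the same blow-up analysis will then apply locally to any other concentration radius arising in the sign-changing case.

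Setting $\mu_k := |u_k(0)|^{-(p_k-2)/2}\to 0$, I study the rescaled profile
\[ v_k(y) := \mu_k^{2/(p_k-2)}\, u_k(\mu_k y), \qquad y \in B_{R/\mu_k}, \]
which solves $-\Delta v_k + \mu_k^2 v_k = |v_k|^{p_k-2}v_k$ with $|v_k(0)|=1$. The exponent $\alpha_k$ is precisely the one preserved by this rescaling, so \eqref{energy} gives a uniform bound $\|v_k\|_{L^{\alpha_k}(B_{R/\mu_k})}\leq C$. Since $\alpha_k\geq 2^*$ with $\alpha_k\to 2^*$, a Moser-type iteration on the rescaled equation produces a uniform $L^\infty_{\mathrm{loc}}$ bound, and elliptic regularity then gives $v_k\to U$ in $C^2_{\mathrm{loc}}(\R^N)$ to a nontrivial finite-energy solution of $-\Delta U = |U|^{2^*-2}U$ on $\R^N$ with $|U(0)|=1$. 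By Caffarelli-Gidas-Spruck, $U$ is (up to sign) a standard Talenti bubble.

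To derive a contradiction I use the Pohozaev identity on $B_R$: testing \eqref{LNTk} against $x\cdot\nabla u_k$ and using radiality (which kills $\nabla_T u_k$ on $\partial B_R$) together with $\partial_\nu u_k = 0$ yields
\[ \Big(\tfrac{N-2}{2} + \tfrac{N}{p_k}\Big) \int_{B_R} |u_k|^{p_k}\,dx - (N-1) \int_{B_R} u_k^2\,dx = \tfrac{R}{p_k} \int_{\partial B_R} |u_k|^{p_k}\,d\sigma - \tfrac{R}{2} \int_{\partial B_R} u_k^2\,d\sigma. \]
Along a subsequence, $u_k\to w$ in $C^2_{\mathrm{loc}}(\overline{B_R}\setminus\{0\})$ for some smooth radial solution $w$ of \eqref{LNT} at $p=2^*$, which itself satisfies the corresponding limit Pohozaev identity. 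Subtracting, both boundary terms cancel (concentration is only at $0$) and a principal bubble contribution $(N-2)\int_{\R^N}|U|^{2^*}\,dy$ survives on the left, which is a strictly positive constant; the right-hand side collects (i) a bubble self-interaction in $\int u_k^2$ of order $\mu_k^2\int_{\R^N}U^2\,dy$, finite since $N\ge 5$; (ii) a $(p_k-2^*)$-correction whose sign is controlled by the hypothesis $p_k\ge 2^*$; and (iii) bubble-background cross terms in $\int u_k^2$ of order $\mu_k^{(N-2)/2}$, coming from $\int \mu_k^{-(N-2)/2}U(\cdot/\mu_k)\, w$. In dimension $N\ge 7$ one has $(N-2)/2 > 2$, so the cross terms are dominated by $\mu_k^2$ and all three errors vanish as $k\to+\infty$, contradicting the positive leading term.

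The main expected obstacle is the rigorous expansion of $\int u_k^2$ and $\int |u_k|^{p_k}$ up to the order $\mu_k^2$, and especially the precise bookkeeping of the bubble-background interaction: this is where the threshold $N\ge 7$ becomes essential, in the spirit of the Druet-Hebey-Robert compactness philosophy for Yamabe-type equations. A secondary difficulty, specific to sign-changing $u_k$, is to rule out concentration on an interior sphere $|x|=r_\infty>0$: there the rescaled limit equation would be one-dimensional, and a separate argument based on a one-dimensional Pohozaev identity on the radial profile, combined with the scaling-invariant bound \eqref{energy}, is needed to exclude this regime when $p_k\ge 2^*$ and $N\ge 7$.
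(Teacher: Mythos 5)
Your overall strategy (blow-up by contradiction, rescale to a Talenti bubble, then a Pohozaev identity) matches the paper's philosophy, but the core technical step is wrong and the quantitative part is missing. The Pohozaev identity you write is incorrect: after testing \eqref{LNTk} against $x\cdot\nabla u_k$ (and using $\int|\nabla u_k|^2 = \int |u_k|^{p_k} - \int u_k^2$ to eliminate the gradient term), the coefficient of $\int_{B_R}|u_k|^{p_k}$ is $\tfrac{N}{p_k}-\tfrac{N-2}{2}$, which \emph{vanishes} at $p_k=2^*$ (to first order it is $-\tfrac{(N-2)^2}{4N}(p_k-2^*)$), not $\tfrac{N-2}{2}+\tfrac{N}{p_k}$, which evaluates to $N-2\neq 0$. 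Likewise the $\int u_k^2$ coefficient is $-1$, not $-(N-1)$. This matters decisively: with the correct identity there is \emph{no} order-one ``principal bubble contribution $(N-2)\int_{\R^N}|U|^{2^*}$'' surviving on the left; the $|u_k|^{p_k}$-bubble contribution is multiplied by $(p_k-2^*)\to 0$ (and, crucially, has a sign favourable for $p_k\ge 2^*$), so your claimed contradiction ``positive constant $\ge$ errors of order $\mu_k^2$'' collapses. The true balance is entirely between terms that all tend to zero, and the argument only closes if one tracks their orders exactly.

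This is precisely where the paper's approach differs and where the real work lies. Rather than a Pohozaev identity on the whole ball subtracted from the limit identity, the paper applies the Pohozaev identity \eqref{poho} on the \emph{shrinking} ball $B_{\delta_k}$ with $\delta_k=\sqrt{\mu_{L,k}}$. The inputs are the sharp pointwise estimates of Proposition \ref{theorieC0} and the derivative estimate \eqref{estder}, which are established for an arbitrary bubble tower of length $L\ge 1$ (your proposal only discusses a single bubble and would have to rule out towers separately). With these, one shows $\int_{B_{\delta_k}}u_k^2 \gtrsim \mu_{L,k}^2$ for $N\ge 5$, that the boundary terms on $\partial B_{\delta_k}$ are $O\big(\mu_{L,k}^{(N-2)/2}\big)$, and that the $\tfrac{(N-2)^2}{4N}(p_k-2^*)\int_{B_{\delta_k}}|u_k|^{p_k}$ term is $\ge 0$ because $p_k\ge 2^*$. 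Plugging into \eqref{poho} gives $C\mu_{L,k}^2 \le O\big(\mu_{L,k}^{(N-2)/2}\big)$, which is a contradiction exactly when $\tfrac{N-2}{2}>2$, i.e.\ $N\ge 7$. Your proposal identifies the right threshold $N\ge 7$ and the right ``cross-term order'' heuristic, but as written it neither has a correct Pohozaev identity nor the pointwise a priori estimates needed to make the orders-of-magnitude comparison rigorous, so the conclusion is not reached.

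As a side remark, your worry about concentration on an interior sphere $|x|=r_\infty>0$ is already handled in the paper by a Strauss-type radial estimate and the monotonicity of the energy $E_k(r)$ in \eqref{dec}, which forces $\max_{B_R}|u_k|=|u_k(0)|$ and hence concentration only at the origin; a separate one-dimensional Pohozaev argument is not needed.
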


By standard elliptic theory, Theorem \ref{thstabi-Nge7} implies that $(u_k)_{k\ge0}$ strongly converges, up to a subsequence, in $C^2(\overline{B_R})$ to a solution of \eqref{LNT} with $p=2^*$. 
By the result of \cite{Ni83}, radial solutions of \eqref{LNTkr} when $p_k \ge 2^*$ have fixed sign (otherwise there would be a Dirichlet solution on some smaller ball and this is impossible in a star-shaped domain). In lower dimensions, we can cover the exactly critical case $p_k = 2^*$ for all $k\ge 0$. 

\begin{thm}[Exactly critical case]
 \label{thstabi2etoile}\mbox{ } \\
Let
$p_k=2^*$ for all $k\ge 0$.
Assume that $(u_k)_{k \ge0}$ is a sequence of radial solutions of \eqref{LNTk} that is uniformly bounded in $L^{2^*}(B_R)$. 
\begin{itemize}
\item If $N \not \in \{3, 6\}$, then $(u_k)_{k \ge0}$ is precompact in $C^2(\overline{B_R})$. 
\item If $N=3$, there exists an exceptional radius $R^* >0$ such that if $R\ne R^*$, then $(u_k)_{k \ge0}$ is precompact in $C^2(\overline{B_R})$. 
\item If $N=6$ there exists as increasing sequence of exceptional radii $(R_\ell)_{\ell\ge 1}$, with $R_\ell \to + \infty$ and $\liminf_{\ell \to + \infty} (R_\ell-R_{\ell-1}) >0$, such that if $R \not \in \{R_\ell\}_{\ell\ge 1}$ then $(u_k)_{k \ge0}$ is precompact in $C^2(\overline{B_R})$.
\end{itemize}
\end{thm}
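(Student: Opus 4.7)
For $N \ge 7$ the statement follows directly from Theorem \ref{thstabi-Nge7} applied to the constant sequence $p_k = 2^*$. In the remaining dimensions $N \in \{3, 4, 5, 6\}$ I argue by contradiction. Suppose there exists a sequence $(u_k)_{k \ge 0}$ of radial solutions of \eqref{LNTk} with $p_k = 2^*$, uniformly bounded in $L^{2^*}(B_R)$, such that $\|u_k\|_{L^\infty(B_R)} \to +\infty$. A concentration-compactness analysis in the radial class (which must handle sign-changing solutions, since the rigidity argument of \cite{Ni83} used in Theorem \ref{thstabi-Nge7} is not available at the exactly critical exponent) shows that the blow-up is necessarily given by a single bubble concentrating at the origin, the only admissible radial blow-up point. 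Upon extraction, one writes
\[ u_k = u_\infty + \alpha_k\, PU_{\mu_k} + r_k, \]
where $U_\mu(x) = \mu^{-(N-2)/2}U(x/\mu)$ is the Aubin--Talenti profile, $P$ is the projection onto Neumann boundary data for $-\Delta + 1$ on $B_R$, $u_\infty$ is the weak $H^1$-limit (a smooth radial solution of \eqref{LNT} at $p = 2^*$, possibly zero), $\alpha_k \to \pm 1$, $\mu_k \to 0^+$, and $r_k$ is a small remainder orthogonal to the bubble manifold.

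The central step is then the derivation of a necessary asymptotic identity for $\mu_k$, obtained by testing \eqref{LNTk} against the scaling direction $\partial_\mu PU_{\mu_k}|_{\mu=\mu_k}$, or equivalently via a Pohozaev identity on a small ball around $0$ together with a careful expansion of the boundary contribution through the Neumann Green's function $G$ of $-\Delta + 1$ on $B_R$. Writing $G(\cdot, 0) = c_N |\cdot|^{-(N-2)} + H$ with $H$ smooth, the resulting leading-order balance reads schematically
\[ A_N\, \mu_k^{N-2}\, H(0; R) \;=\; B_N \int_{B_R} U_{\mu_k}^2 \;+\; \text{contribution from } u_\infty \;+\; \text{lower order}, \]
with positive constants $A_N, B_N$. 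The asymptotics of $\int_{B_R} U_\mu^2$ depend strongly on $N$: one has $\sim C_3 R \mu$ for $N = 3$, $\sim C_4 \mu^2 |\log \mu|$ for $N = 4$, and $\sim C_N \mu^2$ for $N = 5, 6$, always with $C_N > 0$. In dimensions $N = 4$ and $N = 5$ the two sides of the identity are of incompatible orders of magnitude with fixed positive signs, so the identity cannot hold and one reaches a contradiction. In dimension $3$, both sides are of order $\mu_k$ and the identity degenerates into the explicit algebraic condition $A_3 H(0; R) = B_3 C_3 R$, which, by explicit computation of the radial Neumann Green's function via modified spherical Bessel functions and a monotonicity argument, is satisfied at exactly one value $R = R^*$.

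The dimension $N = 6$ is the most delicate. The leading-order Pohozaev identity alone would again be obstructive, as in $N = 5$, but the specificity $2^* - 2 = 1$ makes the linearization of the critical nonlinearity at $u_\infty + \alpha_k PU_{\mu_k}$ reduce to the explicit operator $-\Delta + 1 - 2(u_\infty + \alpha_k U_{\mu_k})$. When $R$ is such that the radial Neumann realization of its limit form develops a nontrivial kernel, an additional free direction appears in the Lyapunov--Schmidt reduction; this extra direction can absorb the Pohozaev obstruction, so that precompactness cannot be concluded from the first-order identity alone. A Sturm--Liouville analysis of the corresponding ODE on $[0, R]$, combined with the explicit decay of the potential, identifies the resonant radii as a discrete, increasing sequence $(R_\ell)_\ell$ tending to $+\infty$, and one checks from the oscillation structure of the ODE that consecutive $R_\ell$ remain uniformly separated.

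The main obstacle is clearly the case $N = 6$: unlike the other dimensions, the first-order Pohozaev identity is not directly obstructive, and one must carry the finite-dimensional reduction one further order and perform a careful spectral analysis of the radial Neumann linearized operator on $B_R$ to characterize the exceptional radii. In every case one must also track both possible signs of the bubble and produce uniform-in-$k$ remainder estimates in the $C^2$ topology rather than only in $H^1$, which requires rather technical bootstrap arguments on the rescaled equation.
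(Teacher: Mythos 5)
For $N=4,5$ and the deferral to Theorem \ref{thstabi-Nge7} for $N\ge 7$, your outline is consistent with the paper's strategy: a single-bubble decomposition followed by the Pohozaev identity yields an order-of-magnitude contradiction between the $\int u_k^2$ term and the boundary term. The $N=3$ balance degenerating into a condition on the mass of the Green's function, determining a unique $R^*$, is also the right idea (the paper obtains exactly $H_1=0$ and quotes \cite{MR3938021} for the explicit computation). Incidentally, the claim that the rigidity argument of \cite{Ni83} is ``not available at the exactly critical exponent'' is incorrect: that argument, applied to a sign change producing a Dirichlet solution on a smaller ball, works for $p\ge 2^*$ including $p=2^*$, and the paper relies on it to reduce to positive solutions.

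The genuine gap is the dimension $N=6$. You assert that the leading-order Pohozaev identity would ``again be obstructive, as in $N=5$,'' and that the exceptional radii $R_\ell$ are those at which the radial Neumann linearization develops a nontrivial kernel, which you propose to locate by a Sturm--Liouville analysis. Both claims are off. For $N=6$ the boundary term and $\int_{B_{\delta_k}} u_k^2$ are of the \emph{same} order $\mu_{L,k}^{(N-2)/2}=\mu_{L,k}^2$; the Pohozaev identity is therefore not a contradiction but a \emph{balance}, which at $p=2^*$ fixes $u_0(0)=\frac12$. The paper then exploits the ODE structure: since $u_0$ is a radial solution of \eqref{LNTkr} with $p=2^*$ and the initial data $u_0(0)=\frac12$, $u_0'(0)=0$, it is determined by local uniqueness (it is the solution of \eqref{contrainteu0} constructed in \cite{Ni83}, which oscillates infinitely around $1$), and the Neumann condition $u_0'(R)=0$ forces $R$ to be one of the critical points $R_\ell$ of that fixed profile. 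Nothing in this argument involves the kernel of the linearized operator $-\Delta+1-2u_0$; the degeneracy of $u_0$ is a separate question, which the paper treats in Lemma \ref{lemme6} and Remark \ref{Rem:N=6crit} only as a \emph{supplementary} ingredient to rule out blow-up even when $R=R_\ell$, and which is left partly conjectural. As written, your $N=6$ argument would therefore both misidentify the structure of the Pohozaev obstruction and characterize the exceptional radii by the wrong condition, so a proof cannot be completed along these lines without essentially replacing this step by the ODE-uniqueness argument.
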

The value of the exceptional radius $R^*$ in dimension $N=3$ is described in Section \ref{sec:classi}. We believe that precompactness also holds for $R=R^*$ but it would require an additional and tedious analysis to prove it, see Remark \ref{Rem:N=3crit}.
In dimension $N=6$, the values of $(R_\ell)_{\ell\ge 1}$ are also univocally determined. We refer again to the proof of Theorem \ref{thstabi2etoile} in Section \ref{sec:classi}. 
Under an additional assumption which seems numerically true, we are able to fully prove precompactness, see Lemma \ref{lemme6} below and Remark \ref{Rem:N=6crit} that follows. 
This almost completely settles the exactly critical case $p_k = 2^*$ in \eqref{LNTk} and leads to the following conjecture :
\begin{conj}[Exactly critical case]
 \label{conjstabi2etoile}\mbox{ } \\
 Any uniformly (energy) bounded sequence $(u_k)_{k \ge0}$ of radial solutions of \eqref{LNTk} with $p_k=2^*$ for all $k\ge 0$, is precompact in $C^2(\overline{B_R})$. 
\end{conj}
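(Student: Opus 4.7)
The plan is to argue by contradiction. Suppose $(u_k)$ is an $L^{2^*}$-bounded sequence of radial solutions of \eqref{LNTk} with $p_k = 2^*$ that blows up. By Theorem \ref{thstabi2etoile}, precompactness already holds outside the exceptional radii, so we may assume either $N = 3$ and $R = R^*$, or $N = 6$ and $R = R_\ell$ for some $\ell \ge 1$. Since \cite{Ni83} forces radial solutions at $p = 2^*$ to be positive and radially decreasing, we have $M_k := u_k(0) = \Vert u_k \Vert_{L^\infty(B_R)} \to +\infty$. The canonical rescaling $v_k(y) = M_k^{-1} u_k(M_k^{-2/(N-2)} y)$ satisfies $-\Delta v_k + M_k^{-4/(N-2)} v_k = v_k^{2^*-1}$ on an expanding ball with zero Neumann data, and by Caffarelli-Gidas-Spruck converges in $C^2_{\mathrm{loc}}(\R^N)$ to the standard Aubin-Talenti bubble $U_0$.

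The core ingredient would be a refined pointwise blow-up expansion of the form
\[
u_k(x) = W_k(x) + \phi_k(x) + o(\delta_k),
\]
where $W_k$ is the bubble matched at the boundary to the Neumann Green's function $G$ of $-\Delta + 1$ on $B_R$, the correction $\phi_k$ carries the first interaction between the bubble and the regular part of $G$ at the origin, and $\delta_k \to 0$ is the intrinsic blow-up rate. Such expansions, in the spirit of the $C^0$-theory of Druet-Hebey-Robert or of the second-order analyses of Esposito-Pistoia-V\'etois, should follow by adapting the machinery already developed in the earlier sections of the paper to the exactly critical case $p_k = 2^*$.

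Substituting this refined ansatz into the Pohozaev identity obtained by testing \eqref{LNTk} against $x \cdot \nabla u_k + \tfrac{N-2}{2} u_k$ and integrating over $B_R$ yields an asymptotic identity of the schematic form
\[
\Lambda_1(R)\, \delta_k^{\alpha_1} + \Lambda_2(R)\, \delta_k^{\alpha_2} + o(\delta_k^{\alpha_2}) = 0,
\]
with $\alpha_2 > \alpha_1$, where $\Lambda_1$ and $\Lambda_2$ are geometric quantities built from the Taylor expansion of $G(\cdot, 0)$. The exceptional radii are precisely the zeros of $\Lambda_1$, which is exactly why Theorem \ref{thstabi2etoile} cannot conclude there; the conjecture then reduces to showing $\Lambda_2(R^*) \neq 0$ in dimension $3$, and $\Lambda_2(R_\ell) \neq 0$ for every $\ell \ge 1$ in dimension $6$.

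The main obstacle is this non-vanishing. The coefficient $\Lambda_2$ involves a delicate integral pairing $U_0$ against the next Taylor coefficient of the regular part of $G(\cdot, 0)$, whose dependence on $R$ is transcendental. In dimension $3$ only one exceptional radius occurs, so an explicit (if tedious) computation of $\Lambda_2(R^*)$ should close the case. In dimension $6$ the authors already establish precompactness conditionally on a numerical non-vanishing assumption, recorded in Lemma \ref{lemme6} and Remark \ref{Rem:N=6crit}; proving the conjecture in full generality therefore amounts to verifying this assumption along the infinite family $(R_\ell)$, which seems to require a monotonicity or generating-function argument for $\Lambda_2(R_\ell)$ as $\ell \to +\infty$, and hence genuinely new input beyond the pure blow-up analysis developed here.
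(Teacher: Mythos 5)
The statement you have been asked to prove is Conjecture \ref{conjstabi2etoile}, which the paper explicitly \emph{leaves open}: Theorem \ref{thstabi2etoile} settles all cases except the exceptional radii, and the residual cases ($N=3$ with $R=R^*$, and $N=6$ with $R=R_\ell$) are supported only by the conditional Lemma \ref{lemme6} and the discussions in Remarks \ref{Rem:N=3crit} and \ref{Rem:N=6crit}. There is therefore no proof of the conjecture in the paper to compare against, and you correctly recognize this by closing with an acknowledgement that the argument requires ``genuinely new input.'' What you have written is a roadmap, not a proof, and you should say so from the outset rather than open with ``the plan is to argue by contradiction'' as though a complete proof will follow.

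As a roadmap it is broadly consistent with what the authors envisage, but your description of the $N=6$ obstruction does not match the paper's actual partial result, and this matters for where the missing work lies. The paper does not expand the Green's function $G(\cdot,0)$ of $-\Delta+1$ to a second coefficient $\Lambda_2$ whose non-vanishing is then left open. Instead, Lemma \ref{lemme6} works with $v_k=u_k-u_0$ (where $u_0$ is the weak limit, uniquely determined by $u_0(0)=\tfrac12$ and the ODE) and with the Green's function of $-\Delta+h_0$, $h_0=1-2u_0$; the existence of this Green's function is exactly what requires $u_0$ to be non-degenerate. Granted non-degeneracy, the refined Pohozaev identity \eqref{poho6} already forces $h_0''(0)=0$, which flatly contradicts $u_0''(0)=\tfrac{1}{24}$ obtained by evaluating \eqref{LNTkr} at $r=0$. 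No higher-order coefficient remains to compute: the only missing ingredient in $N=6$ is the non-degeneracy of a single, explicitly determined radial function $u_0$ at each $R_\ell$, which the paper supports only numerically. So the remaining gap there is a spectral statement about a fixed ODE solution, not the non-vanishing of an integral pairing $\Lambda_2$ as you describe; your $\Lambda_1/\Lambda_2$ picture is more apt for $N=3$, where the paper indeed defers to a second-order analysis in the spirit of \cite{KonigLaurain}. A further small inaccuracy: \cite{Ni83} gives positivity of radial solutions at $p=2^*$, and the maximum of $|u_k|$ sits at the origin because the energy $E_k$ in \eqref{dec} is nonincreasing, but the solutions need not be globally radially decreasing --- the weak limit $u_0$ in $N=6$ in fact oscillates around $1$.
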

The missing cases have just been mentioned : $N=3$ and $R=R^*$ in which case the conjecture is supported by Remark \ref{Rem:N=3crit} and $N=6$ and $R=R_\ell$ in which cases the conjecture is supported by Lemma \ref{lemme6} below and Remark \ref{Rem:N=6crit}.

\medskip

Theorems \ref{thstabi-Nge7} and \ref{thstabi2etoile}  are the main precompactness results of this paper. They do not just follow from classical precompactness results for (possibly sign-changing) solutions of equations similar to \eqref{LNT}, such as \cite{DevillanovaSolimini}.
There are two main novelties in our approach here: first, we include \emph{sign-changing} solutions in our analysis, and second, we allow for asymptotically \emph{supercritical} perturbations in \eqref{LNTk}. To our knowledge this seems to be the first \emph{a priori} blow-up analysis for asymptotically supercritical solutions of equations like \eqref{LNT}. We prove Theorems \ref{thstabi-Nge7} and \ref{thstabi2etoile} by contradiction. Assuming that blow-up occurs we prove a sharp pointwise asymptotic description of blowing up sequences of solutions (see Proposition \ref{theorieC0} below). The \emph{radial symmetry} forces blow-up to occur in the form of finite sums of positive bubbles at the origin, with possibly alternating signs (Proposition \ref{theorieC0} deals also with the subcritical case and sign-changing solutions exist in that case). This pointwise description is then used to rule out blow-up by means of Pohozaev identities. The ideas that we use in the proofs have been recently developed to prove compactness results for finite-energy sign-changing solutions of critical elliptic equations: we refer for instance to \cite{GhoussoubMazumdarRobert, PremoselliVetois3, PremoselliVetois2}, and to \cite{PremoselliVetois} for compactness results without bound on the energy. 
 
\medskip 

The techniques that we use to prove Theorems \ref{thstabi-Nge7} and \ref{thstabi2etoile} yield more generally necessary conditions for blow-up for positive solutions. We start with the subcritical case:
\begin{thm}[Blow-up in the subcritical case] \label{thblowup-intro-critandsubcrit}\mbox{ }\\
Assume that $p_k \le2^*$ for all $k\ge0$, $\lim_{k \to +\infty} p_k = 2^*$  and let $(u_k)_{k \ge0}$ be a sequence of \emph{positive radial} solutions of \eqref{LNTk} satisfying \eqref{energy} and denote by $u_0$ its weak limit.
If there is blow-up, then
\begin{itemize}
\item $u_0 \equiv 0$ for $3 \le N \le 5$ and $R\ge R^*$ when $N =3$,
\item $u_0(0) \le \frac12$ when $N=6$. 
\end{itemize}
When $3 \le N \le 6$, the blow-up occurs with a single positive bubble at $0$.
\end{thm}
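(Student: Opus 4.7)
The strategy is to combine the pointwise bubble decomposition of Proposition \ref{theorieC0} with the radial Pohozaev identity for \eqref{LNTk}, and then extract the claimed constraints on $u_0$ from the dimension-dependent asymptotics of the bubble integrals.

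By Proposition \ref{theorieC0}, any blowing-up sequence of positive radial solutions admits a $C^0(\overline{B_R})$ decomposition
\[
u_k = u_0 + \sum_{i=1}^m B_{k,i} + o(1),
\]
where each $B_{k,i}$ is a positive standard bubble of scale $\mu_{k,i}\to 0$ centered at some $x_{k,i}$. Since $u_k$ is radial and $(u_k)$ has finite energy, every concentration point must lie at the origin; the interaction estimates of Proposition \ref{theorieC0} applied to same-sign bubbles concentrating at a common point then force $m=1$. This already yields the last assertion of the theorem for $3\le N\le 6$: blow-up occurs with a single positive bubble $B_k$ at $0$ of scale $\mu_k\to 0$.

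The heart of the proof is to insert $u_k = u_0 + B_k + o(1)$ into the Pohozaev identity for \eqref{LNTk}. Using the Neumann condition together with the radial symmetry (which forces $|\nabla u_k|^2=(\partial_\nu u_k)^2=0$ on $\partial B_R$), this identity reads
\[
\Bigl(\tfrac{N}{p_k}-\tfrac{N-2}{2}\Bigr)\int_{B_R} u_k^{p_k}\,dx - \int_{B_R} u_k^2\,dx + \tfrac{R}{2}\int_{\partial B_R} u_k^2\,dS - \tfrac{R}{p_k}\int_{\partial B_R} u_k^{p_k}\,dS = 0.
\]
Subtracting the analogous identity satisfied by $u_0$ at the critical exponent, the boundary terms reduce to $O(2^*-p_k)+o(1)$ since $B_k\to 0$ uniformly on $\partial B_R$, the first interior term is of order $(2^*-p_k)\int_{\R^N}U^{2^*}+o(2^*-p_k)$, and the difference $\int u_k^2-\int u_0^2$ expands as $\int B_k^2 + 2\int u_0 B_k$ up to lower-order terms, with
\[
\int_{B_R}B_k^2\,dx \sim
\begin{cases} c_3 R\,\mu_k & \text{if }N=3,\\ c_4\,\mu_k^2|\log\mu_k| & \text{if }N=4,\\ c_N\,\mu_k^2 & \text{if }N\ge 5,\end{cases}
\qquad \int_{B_R}u_0\,B_k\,dx \sim c_N' u_0(0)\mu_k^{(N-2)/2}.
\]

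Matching orders dimension by dimension then produces the stated conclusions. In dimensions $N=4$ and $N=5$ the quantities $\int B_k^2$ and $u_0(0)\mu_k^{(N-2)/2}$ dominate the $(2^*-p_k)$ residue, so the leading-order balance forces $u_0(0)=0$; the strong maximum principle applied to the limit equation $-\Delta u_0+u_0=u_0^{2^*-1}$ with Neumann condition (and $u_0\ge 0$, $u_0^{2^*-1}\ge 0$) then yields $u_0\equiv 0$. In dimension $N=3$ the balance becomes $R$-dependent because the $O(R\,\mu_k)$ bubble contribution has to be compensated by a boundary correction involving $u_0$, which is possible only under the size condition $R\ge R^*$ identified in Section \ref{sec:classi}. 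In dimension $N=6$ the three quantities $(2^*-p_k)$, $\int B_k^2\sim \mu_k^2$ and $u_0(0)\mu_k^2$ enter the identity at the same order and produce a linear inequality in $u_0(0)$, whose sharp constants deliver the bound $u_0(0)\le 1/2$. The main technical difficulty is precisely the matching of these asymptotic orders: it requires the sharp pointwise estimate on $u_k-u_0-B_k$ provided by Proposition \ref{theorieC0} in order to expand $\int u_k^2$ and $\int u_k^{p_k}$ accurately enough to identify the coefficients that pin down $u_0(0)$.
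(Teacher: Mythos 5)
Your high-level plan — combine the pointwise decomposition of Proposition \ref{theorieC0} with Pohozaev, then match dimension-dependent orders — is the same as the paper's, but two of your key technical steps do not go through as stated, and a third is under-justified.

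First, the derivation of a \emph{single} bubble. Proposition \ref{theorieC0} does \emph{not} rule out towers: it explicitly allows an integer $L\ge 1$ of nested scales, and indeed towers of positive bubbles are consistent with that proposition (they can occur for $N\ge 7$). The assertion that ``interaction estimates applied to same-sign bubbles concentrating at a common point force $m=1$'' is not a consequence of Proposition \ref{theorieC0}; in the paper the exclusion of towers for $3\le N\le 6$ is a separate, non-trivial step (Claim 2 in the proof of Theorem \ref{thblowup}) obtained by running the Pohozaev identity on the intermediate ball $B_{\sqrt{\mu_{1,k}\mu_{2,k}}}$ and balancing the resulting orders. You cannot simply assert $m=1$.

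Second, and more seriously, the cross-term asymptotics you use to extract $u_0(0)$ are incorrect. You claim $\int_{B_R}u_0\,B_k\,dx \sim c_N' u_0(0)\,\mu_k^{(N-2)/2}$, but after the change of variables $r=\mu_k s$ the integrand behaves like $u_0(\mu_k s)\, s^{N-1}(1+s^2)^{-(N-2)/2}\sim u_0(\mu_k s)\, s$ for large $s$, so the integral is dominated by the \emph{far field} $r\sim O(1)$, not by $r\sim \mu_k$. One gets
\[
\int_{B_R} u_0\,B_k\,dx \;=\;\big(c_N+o(1)\big)\,\mu_k^{\frac{N-2}{2}}\int_0^R u_0(r)\,r\,dr,
\]
a \emph{global} functional of $u_0$, not $u_0(0)$. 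The concentrated quantity is the other cross term $\int u_0\,B_k^{2^*-1}\,dx$ (whose kernel decays like $r^{-(N+2)}$), not the one you wrote; your expansion of $\int u_k^2-\int u_0^2$ therefore does not isolate $u_0(0)$.

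Third, the choice of domain for the Pohozaev identity. You apply it on the full ball $B_R$ and then subtract the identity for $u_0$. With the Neumann condition the boundary terms at $r=R$ involve $u_0(R)$, $u_0(R)^{2^*}$ and the tail $\tilde B_{L,k}(R)\sim \mu_k^{(N-2)/2}$; combined with the global cross term above, the resulting relation mixes $u_0(R)$, $\int u_0 r\,dr$ and $(2^*-p_k)$ but contains no direct appearance of $u_0(0)$. The paper instead applies Pohozaev on the \emph{shrinking} ball $B_{\delta_k}$ with $\delta_k=\sqrt{\mu_{L,k}}$: on that ball the $u_0$-contributions to the interior integrals are negligible, and after rescaling $\hat u_k(r)=\mu_{L,k}^{2/(p_k-2)-(N-2)/2}u_k(\sqrt{\mu_{L,k}}\,r)\to (N(N-2))^{(N-2)/4}r^{2-N}+u_0(0)$, the boundary terms at $\delta_k$ explicitly produce the coefficient $u_0(0)\,\mu_{L,k}^{(N-2)/2}$. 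This intermediate-scale choice is precisely how $u_0(0)$ enters; the full-ball identity does not expose it directly, so your matching argument cannot close. The $N=3$ case (where the mass $H_1$ of the Green's function appears and yields $R\ge R^*$) similarly requires the comparison with the Green's function on the intermediate annulus, which your scheme does not reach.

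In short: the approach is in the right family, but you must run Pohozaev on $B_{\sqrt{\mu_{L,k}}}$ (and on $B_{\sqrt{\mu_{i,k}\mu_{i+1,k}}}$ to exclude towers), not on $B_R$, and your evaluation of $\int u_0 B_k$ is wrong — it is a global quantity, not $u_0(0)$.
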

The value of the radius $R^*$ in dimension $N=3$ is as in Theorem \ref{thstabi2etoile}. We refer to Theorem \ref{thblowup} for the necessary mass condition explaining the role or $R^*$. As a consequence of this result, a tower of bubbles, if it exist in the subcritical case, can arise only in dimension $N\ge 7$. 

Theorem \ref{thstabi2etoile} and Conjecture \ref{conjstabi2etoile} state that a purely critical blow-up is not possible. The missing cases are in dimension $3$ and $6$. In these dimensions the direct counterpart of Theorem \ref{thblowup-intro-critandsubcrit} implies that if there is blow-up, then
$u_0 \equiv0$ if $N=3$ and $R=R^*$, while $u_0(0) = \frac12$ if $N=6$, forcing $R$ to be in the set of critical points $\{R_\ell\}_{\ell\ge 1}$ of $u_0$ (those are fixed by the initial condition $u_0(0) = \frac12$, remembering that $u_0'(0)=0$). 

\medskip

We finally cover the supercritical case $p_k > 2^*$ for all $k \ge 0$. Theorem \ref{thstabi-Nge7} states that blow-up cannot happen in this case in dimension $N\ge 7$. In dimensions $3 \le N \le6$, a blow-up might occur, but we show that, at least when $N=4,5,6$, the weak limit of a bubbling sequence is strictly positive: 

\begin{thm} [blow-up in the supercritical case in dimension $3 \le N\le 6$] \label{thblowup-intro-supercritical}\mbox{ }\\
Assume $4 \le N \le 6$. Let $p_k \ge 2^*$ for all $k\ge0$ with $\lim_{k \to + \infty} p_k = 2^*$, $(u_k)_{k \ge0}$ be a sequence of \emph{positive radial} $C^2(\overline{B_R})$ solutions of \eqref{LNTk} satisfying \eqref{energy} and denote by $u_0$ its weak limit.  
If there is blow-up then $u_0 >0$ and $u_0(0) \ge 1/2$ if $N=6$.
If $N=3$ and $u_0=0$, then $R\le R^*$. 
\end{thm}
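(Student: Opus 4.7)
The plan is to argue by contradiction, mirroring the strategy behind the proofs of Theorems~\ref{thstabi-Nge7}, \ref{thstabi2etoile} and \ref{thblowup-intro-critandsubcrit}. I would assume that a blowing-up sequence exists for which the conclusion fails---that is, $u_0 \equiv 0$ when $N=4,5$, or $u_0(0) < 1/2$ when $N=6$, or $u_0 \equiv 0$ together with $R > R^*$ when $N=3$---and derive a contradiction by exploiting the fact that the sign of the defect $p_k - 2^*$ is now opposite to the one used in the subcritical setting.

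The first step is to invoke the sharp pointwise description of Proposition~\ref{theorieC0}. Because the $u_k$ are radial and positive, blow-up is forced to take the form of a finite tower of standard positive Aubin--Talenti bubbles centred at the origin, superposed on the smooth limit $u_0$ plus a controlled remainder. This yields a uniform $C^0$ expansion of $u_k$ on $B_R$ whose integral moments can be computed to the order needed below.

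The crucial second step is to apply the Pohozaev--Nehari identity to $u_k$ on $B_R$. Using $\partial_\nu u_k = 0$ together with the radial symmetry---which forces the tangential gradient on the boundary sphere to vanish---the identity reduces to
\begin{equation*}
N\Big(\frac{1}{p_k} - \frac{1}{2^*}\Big)\int_{B_R} u_k^{p_k}\,dx \;-\; \int_{B_R} u_k^2\,dx \;=\; \omega_{N-1} R^N \bigg[\frac{u_k(R)^{p_k}}{p_k} - \frac{u_k(R)^2}{2}\bigg].
\end{equation*}
Every quantity in this identity is then expanded via Step~1. The bubble self-interaction produces a leading term with a well-known dimension-dependent behaviour (finite in $L^{2^*}$, with $L^2$-mass blowing up logarithmically at $N=4$ and a markedly slow decay at $N=3$), while the cross-interaction between a bubble and the smooth limit $u_0$ produces a term proportional to $u_0(0)$ times the bubble's mass, which is the standard mechanism by which $u_0(0)$ enters the local Pohozaev analysis of Lin--Ni--Takagi type problems. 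The essential new input is the coefficient $N(1/p_k - 1/2^*)$: in our supercritical regime it is \emph{nonpositive}, whereas in the subcritical setting underlying Theorem~\ref{thblowup-intro-critandsubcrit} it was nonnegative. This single sign flip is what reverses every resulting inequality.

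Matching the leading orders then delivers the claimed conclusions dimension by dimension: at $N = 6$, a coefficient proportional to $u_0(0) - 1/2$ whose sign must now be nonnegative, hence $u_0(0) \geq 1/2$; at $N = 4, 5$, a coefficient proportional to $u_0(0)$ which must be strictly positive, hence $u_0 > 0$ by the strong maximum principle applied to $-\Delta u_0 + u_0 = u_0^{2^*-1}$; and at $N = 3$, the slow decay of the three-dimensional bubble together with the boundary contribution produces the same resonance at the critical radius $R^*$ that appears in Theorem~\ref{thblowup}, the supercritical sign flip turning the requirement $R \geq R^*$ of the subcritical case into $R \leq R^*$. The main obstacle lies in the precision of Step~3: the bubble self-interaction, the bubble--$u_0$ cross term and the supercritical defect $N(1/p_k - 1/2^*)\int u_k^{p_k}$ must all be expanded to exactly the same order with matching constants. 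This is most delicate in the borderline dimension $N = 6$, where the precise constant of the cross term is what identifies the critical threshold $1/2$, and in $N = 3$, where one must push the expansion further in order to recover the explicit value of $R^*$.
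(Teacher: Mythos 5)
Your overall philosophy is correct and matches the paper's: derive the conclusions by combining the sharp pointwise description of Proposition~\ref{theorieC0} with a Pohozaev identity, and exploit the sign of $p_k - 2^*$, which flips all resulting inequalities relative to the subcritical case. In the paper this theorem is indeed a direct corollary of Theorem~\ref{thblowup}, whose proof (Claims~4 and~5) follows exactly that template.

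The genuine gap is in your second step. You propose applying the Pohozaev identity over the \emph{whole} ball $B_R$, using $u_k'(R)=0$ to kill the derivative boundary terms. The paper never does this; it always applies \eqref{poho} on \emph{interior} balls $B_{\delta_k}$ with a carefully tuned radius: a fixed small $\delta$ to show $u_0>0$ (Claim~5) and to get the $N=3$ mass condition (Claim~4), and the intermediate scale $\delta_k=\sqrt{\mu_{L,k}}$ to extract the sharp constraint $u_0(0)\ge\tfrac12$ at $N=6$ (this is the $\delta_k$ in Claim~1 whose argument is reused verbatim in Claim~5). This choice is not cosmetic. Using the full ball, after subtracting the Pohozaev identity satisfied by $u_0$ on $B_R$, leaves you with a relation between several competing terms all of size $\mu_{L,k}^{(N-2)/2}$ (for $N=6$, size $\mu_{L,k}^2$): the bubble--$u_0$ cross term $\int u_0 B_{\lambda_k}\sim u_0(0)\,\mu_{L,k}^{(N-2)/2}$, the bubble self-interaction $\int B_{\lambda_k}^2$, the supercritical defect $(p_k-2^*)\int u_k^{p_k}$, \emph{and} the boundary corrections $u_k(R)-u_0(R)\sim \mu_{L,k}^{N-2-\frac{2}{p_k-2}}G_1(0,R)$, which bring in the Green's function $G_1(0,R)$ and $u_0(R)$ at the very same order. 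Your sketch asserts that the resulting leading coefficient is cleanly proportional to $u_0(0)$ (for $N=4,5$) or to $u_0(0)-\tfrac12$ (for $N=6$); this is not visible, and probably false without further cancellations, in the full-ball version. The paper's use of $\delta_k=\sqrt{\mu_{L,k}}$ is precisely what eliminates the spurious $G_1(0,R)$ and $u_0(R)$ contributions: at that scale the rescaled limit profile is exactly $\frac{(N(N-2))^{(N-2)/4}}{r^{N-2}}+u_0(0)$, so the only feature of $u_0$ entering the boundary term is $u_0(0)$, and the constant $\tfrac12$ falls out of comparing that boundary term with $\int B_0^2$.

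There is also a structural conflation in your sketch. In the paper, $u_0>0$ (for $4\le N\le6$) and $u_0(0)\ge\tfrac12$ (for $N=6$) are established by \emph{two distinct applications} of the Pohozaev identity with different $\delta_k$. For $u_0>0$ one assumes $u_0\equiv 0$ and observes that with $\delta_k=\delta$ fixed, the boundary term is $O(\mu_{L,k}^{2(N-2)-\frac{4}{p_k-2}})$ while $\int_{B_\delta}u_k^2\gtrsim \mu_{L,k}^{N-\frac{4}{p_k-2}}$, and since the supercritical defect adds a nonnegative term, one directly gets $\mu_{L,k}^2\lesssim\mu_{L,k}^{N-2}$ (with a logarithm for $N=4$), a contradiction; this argument never sees $u_0(0)$. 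Only afterward, with $u_0>0$ established and $\delta_k=\sqrt{\mu_{L,k}}$, does the sharper coefficient $u_0(0)-\tfrac12$ appear. Your description folds these into a single leading-order balance, which does not reflect how the dimension-dependent coefficients actually arise, and would not tell you where the value $\tfrac12$ comes from. If you want to pursue a full-ball Pohozaev instead, you would need to carry out and justify all the cancellations between the annular tail of $u_k-u_0$, the boundary Green's function terms, and the $u_0$-Pohozaev identity to the order $\mu_{L,k}^{(N-2)/2}$ and then $\mu_{L,k}^{\min(2,N-2)}$; the interior-ball choices in the paper are designed precisely to avoid this.
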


When $N=3$ the weak limit need not be positive, as the examples in \cite{ReyWei} show, see Remark \ref{remN3} below.

\medskip

It is natural to investigate the sharpness of the conclusion of Theorems \ref{thstabi-Nge7} to \ref{thblowup-intro-supercritical}: that is, to understand if blowing up sequences of solutions do exist when the necessary conditions of Theorems \ref{thblowup-intro-critandsubcrit} and \ref{thblowup-intro-supercritical} are fulfilled. We pursue this aims for positive solutions. We say that a sequence of positive radial solutions $(u_{k})_{k\ge0}$ to \eqref{LNTk} is of the form $u_0+B$ if there exists $u_0 \in C^{2,\theta}(B_R)$, $\theta \in (0,1)$,  $u_0 \ge 0$, and a sequence of bubbles
$$B_{\lambda_k} = [N(N-2)]^{\frac{N-2}{4}} \left( \dfrac{\lambda_k}{\lambda_k^2 +|x|^2} \right)^{\frac{N-2}{2}} $$
with $\lambda_k >0$, $\lambda_k \rightarrow 0$ as $k \to + \infty$, such that
\begin{equation} \label{u0B}
u_{k}=u_0+ B_{\lambda_k}+o(1) \quad \text{ in } H^1(B_R),
\end{equation}
as $k\rightarrow +\infty$. If $u_0 \equiv 0$ in the previous definition we just say that $(u_k)_{k\ge0}$ is of the form $B$. 

\medbreak

We prove in Section \ref{LS} that positive radial blowing up solutions satisfying \eqref{energy} exist for $N \ge 4$ when $p_k\to 2^*$ from below and in  all the cases not covered by Theorem\ref{thstabi-Nge7} when $p_k\to 2^*$ from above. We start with a subcritical bubbling sequence: 
\begin{thm}[Type $B$ in the subcritical case in dimensions $N\ge 4$] \label{thinstabi-typeB}\mbox{ }\\
If $N \ge 4$, there exists a sequence $p_k \to 2^*$ with $p_k < 2^*$ for all $k \ge0$ and a sequence $(u_k)_{k\ge0}$ of positive radial solutions of \eqref{LNTk} of type $B$. 
\end{thm}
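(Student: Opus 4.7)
The proof proceeds by a Lyapunov--Schmidt reduction performed in the radial class, following the general scheme developed in the references mentioned in the introduction. Define the Neumann projection $PB_\lambda$ of the bubble $B_\lambda$ as the unique radial solution of
\begin{equation*}
-\Delta (PB_\lambda) + PB_\lambda = B_\lambda^{2^*-1} \text{ in } B_R, \qquad \partial_\nu (PB_\lambda) = 0 \text{ on } \partial B_R.
\end{equation*}
Since $-\Delta B_\lambda = B_\lambda^{2^*-1}$ in $\R^N$, the correction $\psi_\lambda := B_\lambda - PB_\lambda$ solves a linear Neumann problem with forcing $B_\lambda$ and boundary data $\partial_\nu B_\lambda = O(\lambda^{(N-2)/2})$, which yields $\psi_\lambda = O(\lambda^{(N-2)/2})$ in $L^\infty(B_R)$ by standard elliptic estimates.

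I look for solutions of the form $u_k = PB_{\lambda_k} + \phi_k$ with $\phi_k$ radial and $H^1$-orthogonal to $Z_{\lambda_k} := \partial_\lambda PB_{\lambda}\big|_{\lambda = \lambda_k}$. The linearized operator around $B_\lambda$ on $\R^N$ has, in the radial class, a one-dimensional kernel spanned by $\partial_\lambda B_\lambda$; combined with the decay of $\psi_\lambda$ and $\epsilon_k := 2^* - p_k \to 0$, this yields a uniform coercivity estimate for the linearization of \eqref{LNTk} around $PB_\lambda$ on $(\R Z_\lambda)^\perp$. A contraction mapping argument then produces, for each small $\lambda > 0$ and $\epsilon_k$ small enough, a unique small $\phi_\lambda \perp Z_\lambda$ such that $PB_\lambda + \phi_\lambda$ solves \eqref{LNTk} modulo a Lagrange multiplier in the direction $Z_\lambda$, together with sharp estimates on $\|\phi_\lambda\|_{H^1}$.

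The solutions of \eqref{LNTk} of the form $PB_\lambda + \phi_\lambda$ are then in one-to-one correspondence with critical points of the reduced energy $\widetilde J(\lambda) := J_{p_k}(PB_\lambda + \phi_\lambda)$, where $J_p(u) = \tfrac12 \int_{B_R} (|\nabla u|^2 + u^2) - \tfrac{1}{p}\int_{B_R} |u|^p$. A careful asymptotic expansion yields, as $\lambda, \epsilon_k \to 0^+$,
\begin{equation*}
\widetilde J(\lambda) = J_{2^*}(B) + \tfrac{1}{2}\int_{B_R} B_\lambda^2 \,-\, \epsilon_k\, G(\lambda) + \text{(higher order)},
\end{equation*}
where $G(\lambda) = c_0 + c_1 |\log \lambda|$ for explicit $c_0 \in \R$, $c_1 > 0$, and the linear term satisfies $\int_{B_R} B_\lambda^2 \sim c_N \lambda^2$ for $N \ge 5$ and $\sim c_4 \, \lambda^2 |\log \lambda|$ for $N = 4$. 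Solving $\partial_\lambda \widetilde J(\lambda) = 0$ produces a non-degenerate critical point $\lambda = \lambda(\epsilon_k) \to 0$, with $\lambda(\epsilon_k)^2 \sim \epsilon_k$ in dimensions $N \ge 5$ and $\lambda(\epsilon_k)^2 |\log \lambda(\epsilon_k)| \sim \epsilon_k$ when $N = 4$. Choosing $\epsilon_k \to 0^+$ and setting $u_k := PB_{\lambda(\epsilon_k)} + \phi_{\lambda(\epsilon_k)}$ gives the required sequence: positivity of $u_k$ follows from the pointwise smallness of $\phi_\lambda$ and $\psi_\lambda$ compared with $B_\lambda$, and the decomposition $u_k = B_{\lambda_k} + o(1)$ in $H^1(B_R)$ is immediate.

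The main technical obstacle is obtaining a sufficiently sharp expansion of $\widetilde J(\lambda)$: the coefficients of the $\lambda^2$-type term (stemming from $\int B_\lambda^2$) and of the $\epsilon_k |\log \lambda|$ term must have the right signs and magnitudes for $\partial_\lambda \widetilde J$ to admit a non-degenerate zero tending to $0$. In dimension $N=4$ the extra logarithmic factor in $\|B_\lambda\|_{L^2}^2$ requires a more delicate bookkeeping of error terms in the reduction. In dimension $N=3$, by contrast, $\|B_\lambda\|_{L^2}^2$ is only of order $\lambda$, which changes the whole balance of leading terms and explains why the statement is restricted to $N \ge 4$.
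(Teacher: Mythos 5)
Your overall strategy is the same as the paper's: a radial Lyapunov--Schmidt reduction around a single bubble centred at the origin, followed by locating a critical point of the reduced energy in the concentration parameter (the paper truncates $B_\lambda$ with a cutoff and reparametrizes $\lambda = \lambda_\e(t)$ to work in $t$ on a compact set, but these are cosmetic differences). There are, however, two concrete gaps. The first is the estimate $\|\psi_\lambda\|_{L^\infty(B_R)} = O(\lambda^{(N-2)/2})$ for $\psi_\lambda := B_\lambda - PB_\lambda$. Because you include the zeroth-order term in defining $PB_\lambda$, $\psi_\lambda$ solves the \emph{inhomogeneous} problem $-\Delta\psi_\lambda + \psi_\lambda = B_\lambda$ in $B_R$ with Neumann data $\partial_\nu B_\lambda$, and the interior forcing $B_\lambda$ --- of size $\lambda^{-(N-2)/2}$ near the origin --- cannot be controlled by boundary data alone. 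A Green's function representation gives $\psi_\lambda(0) \sim \lambda^{(6-N)/2}$ for $N \ge 5$ and $\sim \lambda\log\tfrac{1}{\lambda}$ for $N=4$, far larger than $\lambda^{(N-2)/2}$; for $N \ge 7$ the correction does not even tend to zero. This is precisely why the projected bubble is customarily defined with respect to $-\Delta$ alone (making $\psi_\lambda$ harmonic and controlled by boundary data only), or, as the paper does, replaced by a compactly supported truncated bubble so that no projection is needed. Consequently your positivity argument via pointwise smallness of $\psi_\lambda$ relative to $B_\lambda$ is not justified; positivity should instead follow from the strong maximum principle applied to the constructed solution of $-\Delta u_k + u_k = (u_k)_+^{2^*-1-\e_k} \ge 0$, which is how the paper proceeds.

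The second gap is the sign of the $\e_k|\log\lambda|$ term. With $c_1 > 0$ and your convention $-\e_k G(\lambda) = -\e_k c_0 - \e_k c_1|\log\lambda|$, both $\tfrac12\int_{B_R} B_\lambda^2$ and $-\e_k c_1|\log\lambda|$ are \emph{increasing} in $\lambda$ on $(0,1)$, so $\partial_\lambda\widetilde J > 0$ and the stationarity equation has no solution $\lambda(\e_k) \to 0$. The correct coefficient of $\e_k|\log\lambda|$ is positive: it originates from $\lambda^{(N-2)\e_k/2} = 1 - \tfrac{N-2}{2}\,\e_k|\log\lambda| + O(\e_k^2\log^2\lambda)$ in the term $\int B_\lambda^{2^*-\e_k}$, which appears with a minus sign in $J_\e$ (compare the positive constant $c_4(N)$ in Proposition~\ref{energiered}). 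Your stated conclusion $\lambda(\e_k)^2 \sim \e_k$ actually presupposes the positive sign, so the slip is likely notational and recoverable, but as written the reduced functional is monotone and the construction fails.
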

Note that in the (sub)critical case a sequence of solutions of type $B$ or $u_0 +B$ is uniformly bounded in $L^{2^*}(B_R)$. That a blowing up sequence of solutions $(u_k)_{k \ge0}$ in the subcritical case has to be of type $B$ in dimensions $N=4,5$ follows from Theorem \ref{thblowup-intro-critandsubcrit}. Theorem \ref{thinstabi-typeB} does not cover the case $N=3$. In this case blowing up solutions of type $B$ have been constructed in  \cite{ReyWei}. We discuss the $N=3$ case in more details in Remark \ref{remN3} below.

Still in the subcritical case, we also construct solutions of type $u_0+B$, in dimension $N\ge 6$.
Our constructions of solutions of type $u_0+B$ require that $u_0$ is a non-degenerate solution of \eqref{LNT} with the critical exponent $p=2^*$. We recall that, given $h_0 \in C^0(\overline{B_R})$, a solution $u_0$ of 
\begin{equation} \label{eqexcrit}
\left \{ \begin{aligned}
- \triangle u_0 + h_0 u_0 & = |u_0|^{2^*-2}u_0 & \textrm{ in } B_R \\
\partial_\nu u_0 & = 0 & \textrm{ in } \partial B_R.
\end{aligned} \right.
\end{equation} 
is \emph{nondegenerate} if, for any $\varphi \in H^1(B_R)$,
\begin{equation} \label{nondege}
\left \{ \begin{aligned}
- \triangle \varphi + \Big( h_0 - (2^*-1)|u_0|^{2^*-2} \Big) \varphi &= 0  &\textrm{ in } B_R \\
\partial_\nu \varphi & = 0 & \textrm{ in } \partial B_R.
\end{aligned} \right. 
\end{equation}
implies $\varphi \equiv 0$. 
\begin{thm}[Type $u_0+B$ in the subcritical case in dimensions $N\ge 6$] \label{thinstabi_u0+Bsub}\mbox{ }\\
Let $N\ge 6$ and $u_0\in C^{2,\theta}$, $\theta \in (0,1)$, be a positive nondegenerate radial solution of \eqref{LNT} with the critical exponent $p=2^\ast$. If $N=6$, assume in addition that $u_0(0)< \frac12$. There exists a sequence $p_k$ with $\lim_{k \to + \infty} p_k = 2^\ast$, $p_k < 2^*$ for all $k \ge 0$, and a sequence  $(u_k)_{k \ge 0}$ of positive radial solutions of \eqref{LNTk} of type $u_0 + B$. Moreover, we have that
\begin{itemize}
\item $u_{k}-1$ has $i+1$ zeros, if $u_0(0)<1$, 
\item $u_k-1$ has $i$ zeros if $u_0(0) \ge 1$, 
\end{itemize}
where $i$ is the number of zeros of $u_0-1$. 
\end{thm}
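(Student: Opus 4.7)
The plan is to construct $u_k$ by a Lyapunov--Schmidt finite-dimensional reduction, using $u_0+B_{\lambda_k}$ as a first-order ansatz and adjusting the concentration parameter $\lambda_k$ and the exponent $p_k<2^\ast$ simultaneously so that \eqref{LNTk} is satisfied.

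\medskip

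\textbf{Ansatz and linear theory.} First, replace $B_\lambda$ by a projected bubble $PB_\lambda=B_\lambda-\psi_\lambda$, where $\psi_\lambda$ of size $\lambda^{(N-2)/2}$ corrects for the Neumann boundary condition on $\partial B_R$ and absorbs the lower-order term $+u$ of \eqref{LNT} into a small remainder. I search for $u_k=u_0+PB_\lambda+\phi$, with $\phi$ orthogonal (for the inner product associated with the linearized operator) to the one-dimensional radial kernel direction $\partial_\lambda PB_\lambda$. The associated linear operator
\[
L_{k,\lambda}\phi\;=\;-\Delta\phi+\phi-(p_k-1)\,|u_0+PB_\lambda|^{p_k-2}\phi,
\]
restricted to this orthogonal, is uniformly invertible: this uses the nondegeneracy hypothesis on $u_0$ to control outer (order-one) perturbations, together with the classical radial nondegeneracy of $B_\lambda$ modulo dilations to control inner perturbations concentrated at scale $\lambda$. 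The two regimes are patched by a standard blow-up/contradiction argument.

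\medskip

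\textbf{Reduced equation.} The contraction mapping principle then produces a small remainder $\phi=\phi(\lambda,p_k)$, and the full equation reduces to a scalar condition
\[
F_k(\lambda) := \bigl\langle E_k(\lambda)+\text{h.o.t.},\;\partial_\lambda PB_\lambda\bigr\rangle=0,
\]
where $E_k(\lambda)=-\Delta(u_0+PB_\lambda)+(u_0+PB_\lambda)-|u_0+PB_\lambda|^{p_k-2}(u_0+PB_\lambda)$ gathers three effects: the exponent shift $u_0^{2^\ast-1}-u_0^{p_k-1}$, the interaction $|u_0+PB_\lambda|^{p_k-2}(u_0+PB_\lambda)-u_0^{p_k-1}-PB_\lambda^{p_k-1}$, and the boundary/linear corrections from $PB_\lambda$. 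Expanding each contribution against $\partial_\lambda PB_\lambda$ through standard bubble integrals yields, schematically,
\[
F_k(\lambda)\;=\;a_N(2^\ast-p_k)\;-\;\Gamma_N\bigl(u_0(0)\bigr)\,\mu_N(\lambda)\;+\;c_N\,\lambda^{N-2}\;+\;o(\cdots),
\]
with $\mu_N(\lambda)=\lambda^2$ for $N\ge 7$ and $\mu_6(\lambda)=\lambda^2|\log\lambda|$; the coefficient $\Gamma_N(t)$ is proportional to $t$ in $N\ge 7$ and to $1-2t$ in $N=6$, so under the hypothesis $u_0(0)<\tfrac12$ for $N=6$ (resp.\ $u_0(0)>0$ for $N\ge 7$), it is strictly positive. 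Since $a_N(2^\ast-p_k)>0$ and $\Gamma_N(u_0(0))\mu_N(\lambda)$ increases from zero, an implicit-function/intermediate-value argument produces, for $p_k<2^\ast$ close enough to $2^\ast$, a simple root $\lambda_k\to 0$ of $F_k$, which yields the desired $u_k=u_0+PB_{\lambda_k}+o(1)$ in $H^1$.

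\medskip

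\textbf{Counting zeros and main difficulty.} Away from $0$, $u_k=u_0+o(1)$ in $C^2_{\mathrm{loc}}$, so the sign changes of $u_k-1$ outside a fixed neighborhood of $0$ are asymptotically those of $u_0-1$. Near $0$, after rescaling, $u_k(\lambda_k y)\to [N(N-2)]^{(N-2)/4}(1+|y|^2)^{-(N-2)/2}$, so $u_k(0)\to+\infty$. If $u_0(0)<1$ then $u_0<1$ near $0$ for large $k$ while $u_k(0)>1$, creating exactly one additional sign change of $u_k-1$; if $u_0(0)\ge 1$ then $u_k\ge 1$ on a fixed neighborhood of the origin for large $k$ and no new sign change appears. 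The main obstacle in the whole argument lies in the sharp expansion of $F_k$ in dimension $N=6$, where the logarithmic correction $\lambda^2|\log\lambda|$ appears and where $\Gamma_6$ vanishes precisely at $u_0(0)=\tfrac12$: tracking the constants in these interaction integrals carefully is what pins down the sharp threshold in the hypothesis, and also what forces a delicate handling of the remainder $\phi$ to guarantee it does not perturb the outer zero count.
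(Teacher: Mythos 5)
Your overall framework --- a Lyapunov--Schmidt reduction around $u_0 + B_\lambda$, inverting the linearized operator by combining the nondegeneracy of $u_0$ with the radial nondegeneracy of the bubble, reducing to a scalar bifurcation equation in $\lambda$, and deriving the zero count from $C^2_{loc}$ convergence away from and near the origin --- matches the paper. (Minor differences: the paper uses a cutoff bubble $\chi B_\lambda$ of compact support rather than a boundary-projected one, and it works with the reduced energy $I_\e(t) = J_\e(u_0 + B_{\lambda_\e(t)} + \phi(t))$, looking for a $C^0$-stable critical point in $t$; this is equivalent to your direct projection but a bit cleaner to expand.)

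However, your identification of the leading terms in the scalar equation contains two errors, both of which would actually break the argument. First, for $N\ge 7$ the $\lambda^2$ term does \emph{not} come from the interaction with $u_0$: it comes from the self-interaction $\int_{B_R} B_\lambda^2\,dx \sim \lambda^2$, whose coefficient $\frac{2(N-1)}{N(N-2)(N-4)}K_N^{-N}$ is a positive constant \emph{independent of $u_0(0)$}, while the $u_0$-interaction $\int_{B_R} u_0\, B_\lambda^{2^*-1}\,dx \sim u_0(0)\lambda^{(N-2)/2}$ is of strictly lower order when $N\ge 7$ under the correct scaling $\lambda^2 \sim |\e|$. Your claim $\Gamma_N \propto u_0(0)$ for $N\ge 7$ misattributes the dominant contribution; in particular it would give a degenerate coefficient for $u_0 \equiv 0$ and could not explain the existence of type $B$ solutions for $N\ge 7$ subcritical, which the paper does obtain. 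Second, for $N=6$ there is no logarithmic correction at leading order: both $\int B_\lambda^2$ and $\int u_0\,B_\lambda^{2^*-1}$ scale as $\lambda^2$ (a $|\ln\lambda|$ factor in $\int B_\lambda^2$ occurs only in $N=4$), and since $\lambda^{(N-2)/2} = \lambda^2$ exactly at $N=6$, the two contributions are comparable and combine to a constant multiple of $(1-2u_0(0))\lambda^2$, which is precisely why $u_0(0) = \tfrac12$ is the threshold. If one used your scaling $\lambda^2|\ln\lambda| \sim |\e|$, both $\lambda^2$-terms would be $o(\e)$ and could not balance the $\e\ln\lambda$ contribution from $\int F_\e(B_\lambda)$, so the reduced equation would have no admissible root. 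The paper's scaling for $N\ge 6$ is $\lambda_\e(t)=\sqrt{|\e|\,t}$ with $t$ in a fixed compact interval of $(0,\infty)$, and the critical point of the reduced energy sits at $t_0 = c_4(N)/c_5(N,u_0)$ with $c_5$ as in \eqref{defc5}. Once these two computations are corrected, the remainder of your argument, including the zero-counting, lines up with the paper's proof (which deduces the zero count from Proposition \ref{theorieC0}).
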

The assumption that $u_0$ is non-degenerate is not restrictive and is generic in the choice of $R$ as Proposition \ref{nondegu0} below shows. We mention that in some cases non-degeneracy is not required to construct blowing up solutions, see \cite{RobertVetois4}.  If $N=6$, solutions $u_0$ with $u_0(0)<\frac12$ exist at least for $R>R_1$, see for instance \cite{Ni83} and Remark \ref{Rem:N=6}.
The conditions $N \ge 6$ and $u_0(0) \le \frac12$ when $N=6$ are necessary in view of Theorem \ref{thblowup-intro-critandsubcrit}. 

Our last constructive result of blow-up regards the supercritical case. As shown by Theorems  \ref{thstabi-Nge7} and \ref{thblowup-intro-supercritical}, only dimensions $3\le N\le 6$ allow for supercritical bubbling and solutions cannot be of type $B$ in dimension $4\le N\le 6$.

\begin{thm}[Type $u_0+B$ in the supercritical case in dimensions $3\le N\le 6$] \label{thinstabi}\mbox{ }\\ 
Let $3\leq N \le 6$ and  $u_0\in C^{2,\theta}$, $\theta \in (0,1)$, be a positive nondegenerate radial solution of \eqref{LNT}  with the critical exponent $p=2^\ast$. 
If $N=6$, assume in addition that $u_0(0)> \frac12$. Then there exists a sequence $p_k$ with $\lim_{k \to + \infty} p_k = 2^\ast$, $p_k > 2^*$ for all $k \ge 0$, and a sequence  $(u_k)_{k \ge 0}$ of positive radial solutions of \eqref{LNTk} of type $u_0 + B$. Moreover, we have that
\begin{itemize}
\item $u_{k}-1$ has $i+1$ zeros, if $u_0(0)<1$, 
\item $u_k-1$ has $i$ zeros if $u_0(0) \ge 1$, 
\end{itemize}
where $i$ is the number of zeros of $u_0-1$. 
\end{thm}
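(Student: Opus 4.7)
The plan is a Lyapunov-Schmidt finite-dimensional reduction in the radial subspace of $H^1(B_R)$. Let $PB_\lambda \in H^1_{\mathrm{rad}}(B_R)$ denote the Neumann projection of the standard bubble, defined as the radial solution of
\begin{equation*}
-\Delta (PB_\lambda) + PB_\lambda = B_\lambda^{2^*-1} \quad \text{in } B_R, \qquad \partial_\nu (PB_\lambda) = 0 \quad \text{on } \partial B_R.
\end{equation*}
Given $\lambda>0$ small and $p$ close to $2^*$, I look for a solution of \eqref{LNTk} of the form $u = u_0 + PB_\lambda + \phi$, with $\phi$ radial, small, and orthogonal to the one-dimensional near-kernel direction $Z_\lambda = \lambda\, \partial_\lambda PB_\lambda$. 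Plugging the ansatz into \eqref{LNTk} converts the problem into $L_\lambda \phi = \mathcal{E}(\lambda,p) + \mathcal{Q}(\phi) + c\, Z_\lambda$, where $L_\lambda$ is the linearization of $-\Delta + 1 - (\cdot)^{p-1}$ at $u_0 + PB_\lambda$, $\mathcal{E}$ is the ansatz error, and $\mathcal{Q}$ is purely nonlinear.

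The first step is a linear theory for $L_\lambda$ on $Z_\lambda^\perp$ in weighted norms adapted to the profile $B_\lambda$, in the spirit of those used in \cite{MR3564729, PremoselliVetois2}. Uniform invertibility follows from two ingredients: the assumed nondegeneracy of $u_0$, which rules out spurious kernels in the ``outer'' region, together with the standard radial nondegeneracy of the bubble on $\R^N$ (whose radial kernel is spanned solely by $\partial_\lambda B_\lambda$), which rules out kernels in the ``inner'' concentration region. A standard contraction-mapping argument in the chosen weighted norm then provides, for each $(\lambda,p)$ in a suitable window, a unique small correction $\phi = \phi(\lambda,p)$ and a Lagrange multiplier $c=c(\lambda,p)$ solving the projected equation.

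Solvability of \eqref{LNTk} then reduces to finding $(\lambda_k, p_k)$ with $p_k \to 2^*$ from above and $\lambda_k \to 0$ such that $c(\lambda_k, p_k)=0$. Testing the projected equation against $Z_\lambda$ yields a one-dimensional reduced equation whose sharp asymptotic expansion controls everything. Schematically,
\begin{equation*}
\mathcal{R}(\lambda,p) = A_N(u_0)\, \theta_N(\lambda) + B_N \, (p-2^*)\, \sigma_N(\lambda) + o(\text{leading}) = 0,
\end{equation*}
where $\theta_N,\sigma_N$ are explicit positive powers of $\lambda$ with possible logarithmic corrections (no log for $N=3,5$, one log for $N=4,6$), $B_N$ has a definite sign coming from the universal integral $\int_{\R^N} B_1^{2^*} \log B_1$, and $A_N(u_0)$ gathers the interaction terms produced by $u_0$, by the mass $+u_k$ in \eqref{LNTk}, and by the Neumann correction $PB_\lambda-B_\lambda$. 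Direct integration against the bubble profile gives $A_N(u_0) = c_N u_0(0)>0$ for $N=3,4,5$ and $A_6(u_0) = c_6 \bigl(u_0(0)-\tfrac{1}{2}\bigr)$ in the resonant dimension $N=6$. Under the hypotheses of the theorem these coefficients combine so that $\mathcal{R}(\lambda,p)=0$ admits a branch $\lambda=\lambda(p)$ with $\lambda\to 0^+$ precisely when $p-2^*>0$, which produces the sought supercritical family $(u_k,p_k)$ with $u_k$ of type $u_0+B$.

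Finally, positivity of $u_k$ for large $k$ follows since $u_k \to u_0 \geq 0$ uniformly on compact subsets of $\overline{B_R}\setminus\{0\}$ while $u_k(0) \to +\infty$, the remainder $\phi_k$ being controlled in $L^\infty$ by the weighted norm. The zero count of $u_k-1$ is then read off from the uniform convergence $u_k\to u_0$ away from $0$, giving the $i$ zeros of $u_0-1$, plus exactly one extra crossing close to the origin when $u_0(0)<1$ (since $u_k(0)\to+\infty>1>u_0(0)$) and none when $u_0(0)\geq 1$. The main obstacle I anticipate is the sharp computation of $A_N$ in the resonant dimension $N=6$: isolating the factor $u_0(0)-\tfrac{1}{2}$ requires carefully tracking the contribution of the Neumann correction $PB_\lambda-B_\lambda$ inside the interaction integrals, and this is ultimately what distinguishes the supercritical branch ($u_0(0)>\tfrac{1}{2}$) from the subcritical one of Theorem \ref{thinstabi_u0+Bsub}.
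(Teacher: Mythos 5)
Your plan is a Lyapunov--Schmidt reduction, which is indeed the strategy the paper follows in Section~\ref{LS}; the core ingredients you identify (nondegeneracy of $u_0$ for the outer linear theory, radial nondegeneracy of the bubble for the inner one, a one-dimensional reduced equation whose sign determines the side of $2^*$ on which blow-up occurs, and the $u_0(0)-\tfrac12$ threshold in $N=6$) all match the paper. The differences are variant choices within that framework rather than a different route. The paper works with a \emph{cutoff} bubble $\chi(|x|)\lambda^{(2-N)/2}B_0(|x|/\lambda)$ rather than the Neumann-projected bubble $PB_\lambda$, and, crucially, it does an \emph{energy-based} reduction: the free parameter is not $\lambda$ directly but a rescaled parameter $t$, with $\lambda=\lambda_\varepsilon(t)$ pinned down to the right power of $|\varepsilon|$ \emph{a priori} (see \eqref{delta}), and one finds a $C^0$-stable critical point of $t\mapsto I_\varepsilon(t)=J_\varepsilon(u_0+B_{\lambda_\varepsilon(t)}+\phi_\varepsilon(t))$ instead of solving $c(\lambda,p)=0$ by hand. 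The energy parametrisation buys a clean uniform error estimate (Lemma~\ref{reste}) and a transparent expansion (Proposition~\ref{energiered}); your testing-against-$Z_\lambda$ version would work too, but you would still need to fix the right scaling window in $\lambda$ to get uniform error control.

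Two details in your sketch are off, though they do not sink the argument. First, your claim that the reduced equation carries ``one log for $N=4,6$'' is not correct in the setting of this theorem: logarithmic corrections in $\lambda$ only enter at leading order in $N=4$ when $u_0\equiv 0$ (i.e.\ in the type-$B$ construction of Theorem~\ref{thinstabi-typeB}), and there is no leading-order log at all in $N=6$; for Theorem~\ref{thinstabi} one has $u_0>0$ and all cases $3\le N\le 6$ produce pure power balances (see the choices of $\lambda_\varepsilon(t)$ in \eqref{delta} and the expansion \eqref{fin3}). Second, the $u_0(0)-\tfrac12$ threshold in $N=6$ does not really come from the Neumann correction $PB_\lambda-B_\lambda$: in the paper's bookkeeping it arises cleanly as the competition between the mass term $\tfrac12\int_{B_R} B_\lambda^2\,dx$ (contributing $\tfrac{2(N-1)}{(N-2)(N-4)}\lambda^2$) and the interaction term $\int u_0 B_\lambda^{2^*-1}\,dx$ (contributing $\sim u_0(0)\,\lambda^{(N-2)/2}$), which happen to scale identically, both like $\lambda^2$, exactly in $N=6$; this is what produces the constant $c_5(N,u_0)$ in \eqref{defc5} and its sign change at $u_0(0)=\tfrac12$. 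If you insist on the projected-bubble ansatz you can rederive the same coefficient, but you will then need to track \emph{several} terms (the quadratic energy of $PB_\lambda$, the cross terms with $u_0$, and the error $(u_0+PB_\lambda)^{2^*-1}-u_0^{2^*-1}-B_\lambda^{2^*-1}$), not just the boundary correction. Finally, for the zero count, the paper deliberately relies on Proposition~\ref{theorieC0}, which gives a \emph{sharp} pointwise expansion $u_k=u_0+\tilde B_{1,k}+o(\Vert u_0\Vert_\infty+\tilde B_{1,k})+O(\mu_{1,k}^{N-2-\frac{2}{p_k-2}})$; your weighted-$L^\infty$ control of $\phi_k$ could also deliver this, but you would need it to be a strong enough bound (dominated by the bubble profile) to guarantee that $u_k$ crosses the level $1$ exactly once near $0$ when $u_0(0)<1$ and not at all when $u_0(0)\ge 1$.
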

If $N=6$, non constant solutions $u_0$ with $u_0(0)>\frac12$ exist for any radius $R>0$, see \cite{Ni83,MR1106295,MR3564729} and Remark \ref{Rem:N=6}. 
Observe also that $u_0 \equiv 1$ can be chosen in Theorems \ref{thinstabi_u0+Bsub} and \ref{thinstabi}. In that case, the nondegeneracy condition is even explicit : $R$ should be such that $\frac{4}{N-2}$ is not a radial eigenvalue of $-\Delta$ on the ball $B_R$ with Neumann boundary condition, i.e. only a discrete set of radii is forbidden.  

When $N=3$ and $p_k > 2^*=6$ blowing up solutions may be of type $B$ and have for instance been constructed in \cite{ReyWei}. 

\medbreak

The Figure \ref{table} summarizes the classification. It shows the classification is almost complete. 
\begin{figure}[h!t]\label{table}
\begin{center}
\begin{tabularx}{16.475cm}{|c|p{6.2cm}|p{2cm}|p{6.2cm}|}
    \hline
    N & $p_k<2^*$ & $p_k=2^*$ & $p_k>2^*$ \\
    \hline
    $3$ & $\bullet$ {no blow-up if $R<R^*$} & {no blow-up}$^1$ &  \\
&  $\bullet$ {single bubble only} &  & $\bullet$ \underline{towers might exist} \\
&  $\bullet$ {only type $B$ is possible} &  & $\bullet$ {no type $B$ if $R>R^*$}  \\
 &  \emph{$\bullet$ type $B$ occurs for large $R$}  &  & \emph{$\bullet$ type $B$ occurs for small $R$}\\
 & & & $\bullet$ {type $u_0+B$ with $u_0>0$ occurs}  \\ 
\hline
$4,5$ &  $\bullet$ single bubble only& no blow-up  & $\bullet$ \underline{towers might exist}    \\
 &  $\bullet$ only type $B$ is possible &   & $\bullet$ no type $B$  \\
 & $\bullet$ type $B$ occurs & & $\bullet$ type $u_0+B$ with $u_0>0$ occurs  \\
 \hline
 6 & $\bullet$ single bubble only & no blow-up$^2$ & $\bullet$ \underline{towers might exist}\\
  & $\bullet$ only type $u_0+B$ with $u_0(0)\le1/2$  is possible & & $\bullet$ only type $u_0+B$ $u_0(0)\ge1/2$ is possible \\ 
   & $\bullet$ type $u_0+B$ with $u_0(0)<1/2$ occurs & & $\bullet$ type $u_0+B$ with $u_0(0)>1/2$ occurs\\ 
\hline
$\geq 7$ & $\bullet$ \underline{towers might exist} & no blow-up  &  no blow-up \\
& $\bullet$ type $B$ occurs & & \\
& $\bullet$ type $u_0+B$ with $u_0>0$ occurs & & \\
\hline 
\end{tabularx} 

$^1$ see Remark \ref{Rem:N=3crit} - $^2$ see Remark \ref{Rem:N=6crit}
\caption{All the assertions are proved in the paper except those two in italic which are proved in \cite{ReyWei}. We also underlined the cases where we expect towers of bubbles.}
\end{center}
\end{figure}
In dimension $N=3$, one expects blow-up occurs as soon as $R>R^*$ in the subcritical case and $R<R^*$ in the supercritical case. This would require a quantitative analysis of the conditions in \cite{ReyWei}. As already mentioned, the case $R=R^*$ remains open (also in the sub- and super-critical cases). The classification is complete in dimensions $N=4,5$ and $N\ge 7$. The missing piece in dimension $N=6$ concerns the critical case. Finally, we mention that we did not address in this paper the construction of towers of bubbles when they can exist. This part of the classification will be the subject of further works. 
We also emphasize that our focus in this paper has been put on the classification of \emph{radial} blow-up solutions of \eqref{LNTk} at the critical Sobolev exponent. The techniques used in this paper have however been successfully used to prove precompactness results even in the non-radial case: \cite{Premoselli13, PremoselliVetois3,PremoselliVetois2}. Let us also mention that equations like \eqref{LNT} are sensitive to perturbations: when the linear operator in the left-hand side is replaced by $- \triangle$ the situation can be quite different, see for instance  \cite{MR4400170} or \cite{PST}.

\medskip

The proof of Theorems \ref{thinstabi-typeB} to \ref{thinstabi} goes through a Lyapunov-Schmidt reduction and follows closely \cite{MichelettiPistoiaVetois} and \cite{RobertVetois3} (see also \cite{MR3215478}). 
We only construct positive solutions in Theorems \ref{thinstabi-typeB} to \ref{thinstabi} since this is enough to prove the sharpness of Theorems \ref{thstabi-Nge7} and \ref{thstabi2etoile}. It is however very likely that the approach we develop in Section \ref{LS} will yield sign-changing solutions of \eqref{LNTk} having the form $u_0-B$ under suitable conditions in the subcritical case. Towers of bubbles for \eqref{LNTk} are also likely to occur, with or without alternating signs (see Lemma \ref{bubbletowers} below where we prove necessary conditions for towers of bubbles). For critical elliptic equations such as \eqref{LNTk} towers of bubbles have been constructed in \cite{MichelettiPistoiaVetois, PistoiaVetois, MorabitoPistoiaVaira, Premoselli12}. The references \cite{MichelettiPistoiaVetois, RobertVetois3} concern a similar problem on a manifold without boundary (and are not restricted to the radial case) but the techniques straightforwardly adapt to open sets with Neumann boundary conditions and therefore to \eqref{LNTk}. 

\medskip

\subsection{Radial bifurcation analysis of \eqref{LNT}} \label{motivation}

We conclude this introduction by describing how Theorems \ref{thstabi-Nge7} to \ref{thinstabi} complement the radial bifurcation analysis of \cite{MR3564729} around the critical exponent $p=2^*$. In \cite{MR3564729} the authors have studied radial positive solutions of problem \eqref{LNT} bifurcating from the trivial solution $1$ as the parameter $p$ varies. Let $\lambda_i^{rad}$ be the $i$-th eigenvalue of the operator $-\Delta +1$ in $B_R$ with Neumann boundary condition on radial solutions. Among other things, the authors have shown in \cite{MR3564729} that for every $i\geq 2$, $(2+\lambda_i^{rad},1)$ is a bifurcation point for problem \eqref{LNT}. If $\Bcal_i$ denotes the continuum of solutions that branches out of $(2+\lambda_i^{rad},1)$ they also proved that
\begin{itemize}
\item[(i)] the branches $\Bcal_i$ are unbounded and do not intersect; close to $(2+\lambda_i^{rad},1)$, $\Bcal_i$ is a $C^1$-curve;
\item[(ii)] if $u \in \Bcal_i$ then $u >0$;
\item[(iii)] each branch consists of two connected components: the lower branch $\Bcal_i^-$, along which $u (0)<1$, and the upper branch $\Bcal_i^+$, along which $u (0)>1$;
\item[(iv)] if $u \in \Bcal_i$ then $u-1$ has exactly $i-1$ zeros, $u'$ has exactly $i-2$ zeros and each zero of $u'$ lies between two zeros of $u-1$;
\item[(v)] the functions satisfying $u (0)<1$ are uniformly bounded in the $C^1$-norm. 
\end{itemize}
\begin{figure}[h!t]\label{fig}
\begin{center}
{\includegraphics[height=6cm]{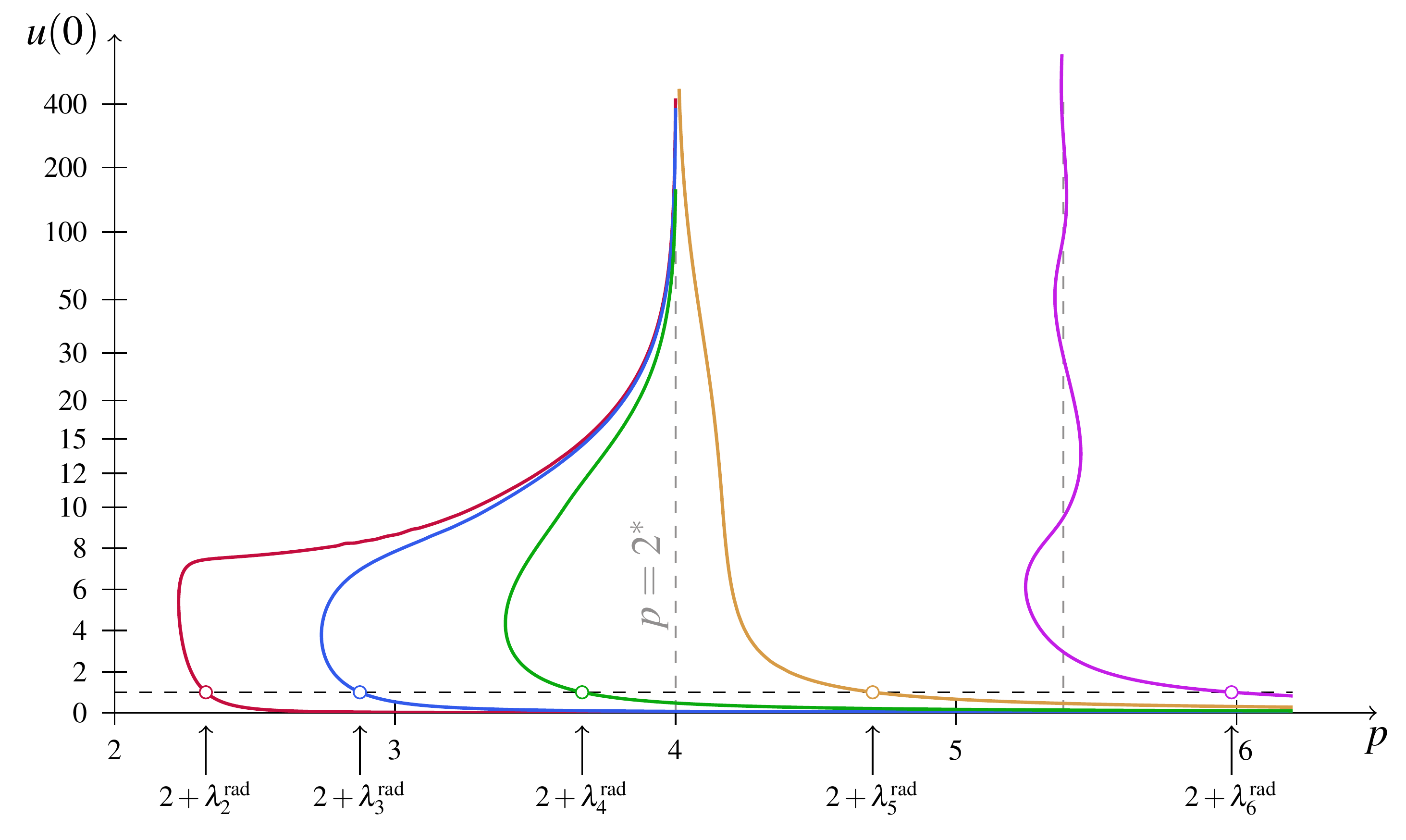}}
\caption{Bifurcation diagram in dimension $N=4$. This figure is taken from \cite{MR3564729} with the courtesy of the authors. }
\end{center}
\end{figure}

The numerical simulations of \cite{MR3564729} seem to indicate that the upper branches $\Bcal_i^+$ can be split into two categories: those having a vertical asymptote at $p =2^*$, that we shall call ``critical'', and those that have vertical asymptotes at supercritical exponents $p_i > 2^*$, that we shall call ``supercritical'' : see e.g. Figure $1$ of this paper and Figure $13$ and Figure $16$ of \cite{MR3564729}. 

Theorems \ref{thinstabi-typeB} to \ref{thinstabi} provide explicit examples of solutions that, we believe, describe the critical upper branches constructed in \cite{MR3564729}. For instance, as Theorems \ref{thinstabi-typeB} shows, positive radial solutions of type $B$ exist for all dimensions $N\ge 4$ in the subcritical case. These solutions take the value $1$ only once and satisfy $u(0)>1$ when $p$ is close to $2^*$. They are good candidates to belong to the second upper branch $\Bcal^+_2$ but a proof of this fact seems quite difficult. Similarly, the branches $\Bcal^+_i$, for $i\geq 3$, are made of functions that take $i+1$ times the value $1$  so it is natural to believe that they correspond to the solutions of the form $u_0^i +B$ constructed in Theorems \ref{thinstabi_u0+Bsub} and \ref{thinstabi}, where $u_0^i$ is a solution of \eqref{eqexcrit} which takes $i$ (if $u^i(0)<1$) or $i+1$ (if $u^i(0)\geq 1$) times the value $1$. A weak limit $u_0^i$ that satisfies $u_0^i(0) <1$ and that takes $i$ times the value $1$ is for instance obtained by taking the intersection of the lower branch $\Bcal^-_{i+1}$ with the vertical line $p=2^\ast$. This always occurs for the $i \ge 1$  satisfying $2+\lambda^i_{rad}<2^\ast$. Note that, for fixed $i\ge1$, $\lambda^i_{rad}$ goes to $0$ as $R \to +\infty$. Up to assuming that  $R$ is big enough we can thus generically find weak solutions $u_0^i$ that oscillate around $1$ a fixed arbitrary number of times. 

Thorem \ref{thstabi-Nge7} and Theorem \ref{thstabi2etoile} also provide informations on the behaviour of the critical branches $\Bcal^+_i$. We now explain how this sheds new light on the numerical simulations of \cite{MR3564729}. Consider Figure $1$ of this paper, which covers dimension $N=4$. We consider the two branches $\Bcal_3^+$ and $\Bcal_4^+$ (those starting from $2+\lambda_{rad}^3$ and $2+\lambda_{rad}^4$). These branches seem to blow up from the left of the vertical line $p=2^*$. \emph{We assume that this is the case.} If we moreover assume that they have bounded $H^1(B_R)$ energy contradicts Theorem \ref{thblowup-intro-critandsubcrit}: this Theorem indeed claims, since $N=4$, that solutions along $\Bcal_3^+$ and $\Bcal_4^+$ blow up with a single bubble at $0$. But then Proposition \ref{theorieC0} below would show that these solutions only take the value $1$ once, which is not the case. There are two possible explanations for this phenomenon. The first is that the solutions on the branches $\Bcal_3^+$ and $\Bcal_4^+$ in the figure all have finite energy and actually cross the line $p=2^\ast$ for very large values of $u(0)$ that are not in the figure; and they then blow up from the right of the vertical line $p=2^*$ according to Theorem \ref{thinstabi}, or may even blow up at a larger supercritical exponent $p_*>2^*$. The second is that the solutions along $\Bcal_3^+$ and $\Bcal_4^+$ do blow up from the left of $p=2^*$ but have infinite energy as $u(0) \to + \infty$ along the branch. The latter situation could correspond, for instance, to an infinite tower of bubbles centered at $0$ glued to a non-zero weak limit $u_0$ as $p \to 2^*$, $p \le 2^*$. Note however that infinite tower of bubbles as $p \to 2^*$ are not known to exist in the context of \eqref{LNT} yet.  In the same vein, Theorem \ref{thstabi2etoile} implies that the finite-energy (in the sense of \eqref{energy}) branches that blow up along the vertical line $p=2^*$ can only oscillate around it a finite number of times (dimension $3$ and $6$ are not fully covered by Theorem \ref{thstabi2etoile}). If Conjecture \ref{conjstabi2etoile} holds, it completely rules out a snaking behaviour of  branches of finite-energy solutions around $p=2^*$. 

We also expect the supercritical branches to correspond to the solutions that  have been constructed in \cite{MR4149344} when $p>2^\ast$. These are positive radial solutions blowing up at the origin and intersecting an arbitrary number of time the value $1$. These solutions were proved  to have infinite Morse index if their vertical asymptote $p$ satisfies $2^\ast -1 <p <p_{JL}$, where $p_{JL}$ is the Joseph and Lundgren exponent  (see \cite{MR340701}) and have finite Morse index if $p>p_{JL}$. This suggests that the branches with vertical asymptote between $2^\ast -1$ and $p_{JL}$ oscillates infinitely many times around the vertical line $p$, a phenomenon which is also observed numerically see Figure $1$. We refer to \cite{MR4053034} for similar results related to the Keller-Segel equation. In this case, the Joseph and Lundgren exponent is replaced by a condition on the dimension of $B_R$. 

The previous considerations describe the behaviour of the branches $\Bcal_i$ in the direction where $u(0)$ blows up. It is also natural to investigate what happens on the other end of $\Bcal_i$, i.e. when $p\rightarrow \infty$. We believe that the solutions corresponding to the lower branches $\Bcal_i^-$ are the solutions that have been constructed in \cite{MR3487266}. There the authors prove, when $p\rightarrow \infty$, the existence of radial positive solutions concentrating on an arbitrary number of spheres and thus intersecting an arbitrary number of times the value $1$  and whose value at the origin is strictly less than $1$ as $p\rightarrow \infty$. Let us finally mention that analogous results have also been obtained for the Keller-Segel equation in \cite{MR3641921}, where the polynomial nonlinearity is replaced by $e^{\mu (u-1)}$ and the bifurcation analysis is done with respect to the parameter $\mu>0$. 

\subsection{Organization of the paper}
In Section \ref{apriori} we prove a sharp pointwise asymptotic description of finite-energy solutions of \eqref{LNTk}, namely Proposition \ref{theorieC0}. This is, to our knowledge, the first sharp a priori asymptotic pointwise description of smooth solutions of \eqref{LNTk} when $p_k >2^*$, $p_k \to 2^*$. 
We apply it to obtain necessary condition for the existence of blowing up solutions of \eqref{LNTk} in Section \ref{pohosec} in a general setting (Theorem \ref{thblowup}). We prove Theorems \ref{thstabi-Nge7} to \ref{thblowup-intro-supercritical} as a consequence. We finally prove Theorems \ref{thinstabi-typeB} to \ref{thinstabi} in Section \ref{LS}.

\section{A priori analysis of finite-energy solutions of \eqref{LNTk} } \label{apriori}

Let $N \ge 3$ and let  $(p_k)_{k\ge0}$ be a sequence of positive numbers, $p_k \ge 2$, converging to $2^* = \frac{2N}{N-2}$ as $k \to + \infty$. For $R>0$ we will denote by $B_R \subset \R^N$ the euclidean ball centered at $0$ and of radius $R$. Throughout this section we let $(u_k)_{ k\ge 0}$ be a sequence of possibly sign-changing radial solutions of \eqref{LNTk} in $C^2(\overline{B_R})$ satisfying \eqref{energy}. A consequence of \eqref{LNTk} and \eqref{energy} is that $u_k$ weakly converges in $H^1(B_R)$ and $L^{2^*}(B_R)$ towards some weak limit $u_0$, that solves \eqref{LNT} with $p=2^*$ and is hence at least in $C^2(\overline{B_R})$.
\smallskip 

We assume throughout this section that the sequence $(u_k)_{k \ge0}$ blows up, that is
\begin{equation} \label{blowup}
\lim_{k \to + \infty} \Vert u_k \Vert_{L^\infty(B_R)} = + \infty 
\end{equation}
up to a subsequence. For any $k \ge 0$, since $u_k$ is radial, it satisfies the ODE
\begin{equation} \label{LNTkr} 
\left \{ \begin{aligned}
- u_k''(r) - \frac{N-1}{r} u_k'(r)  + u_k(r) & = |u_k(r)|^{p_k-2}u_k(r) & \textrm{ in } [0,R] \\
u_k'(0) =u_k'(R) & = 0. & 
\end{aligned} \right.
\end{equation}
Since the energy
\begin{equation} \label{dec}
 r \mapsto E_k(r) := \frac{|u_k'(r)|^2}{2} +  \frac{|u_k(r)|^{p_k}}{p_k} -  \frac{|u_k(r)|^2}{2}
\end{equation} 
is nonincreasing in $[0,R]$, 
$\max_{B_R} |u_k |= |u_k(0)|$, hence $ |u_k(0)|\to + \infty$ as $k \to + \infty$ by \eqref{blowup}. Let $1\le \ell \in \mathbb N$ and $(\mu_{1,k})_{k\ge0}$, $\dots$, $(\mu_{\ell,k})_{k \ge0}$ be $\ell$ sequences of positive numbers converging to $0$ as $k \to + \infty$. For $1 \le i \le \ell$, we adopt the notations
\begin{equation} \label{bulles}
 B_{i,k}(r) = \big(N(N-2) \big)^{\frac{N-2}{4}} \left( \frac{\mu_{i,k}}{\mu_{i,k}^2 + r^2}\right)^{\frac{N-2}{2}} \quad \textrm{ and } \quad \tilde{B}_{i,k}(r) =\mu_{i,k}^{\frac{N-2}{2} - \frac{2}{p_k-2}} B_{i,k}(r),
 \end{equation}
 \begin{equation} \label{B0}
 B_0(r) = \big( N(N-2) \big)^{\frac{N-2}{4}}\Big( 1+ r^2 \Big)^{1 - \frac{N}{2}},
 \end{equation}
 where $ r \ge 0$. 
It is well-known that
\[ \int_{\R^N} B_0^{2^*} dx = K_N^{-N}, \]
where $K_N$ is the sharp constant for the embedding of $\dot{H}^1(\R^N)$ (the homogeneous Sobolev space) into $L^{2^\ast}(\R^N)$ given by 
\begin{equation} \label{KNN}
K_N=\sqrt{\frac{4}{N (N-2)\omega_N^{\frac{2}{N}}}}.
\end{equation}
Since $p_k \to 2^*$, observe  that $\frac{N-2}{2} - \frac{2}{p_k-2} \to 0$ as $k \to + \infty$, and $\tilde{B}_{i,k}$ has to be understood as a perturbation of $B_{i,k}$ that takes into account the asymptotic criticality of \eqref{LNTkr}. The main result of this section is the following, sharp pointwise asymptotic description of $u_k$: 
\begin{prop} \label{theorieC0}
Assume that $(u_k)_{k \ge0}$ satisfies \eqref{blowup} \eqref{LNTkr} and \eqref{energy} and let $u_0$ be its weak limit in $H^1(B_R)$. There exist a sequence $(\e_k)_{k\ge0}$ converging to $0$, $C>0$, an integer $L \ge 1$, $L$ sequences $(\mu_{1,k})_{k\ge0}, \dots, (\mu_{L,k})_{k \ge0}$ of positive numbers satisfying $\mu_{i,k} = o(\mu_{i+1,k})$ for any $1 \le i \le L-1$ and a family $(\kappa_i)_{1 \le i \le L} \in \{\pm1\}^L$ such that
\[ \left| u_k(r) - u_0(r) - \sum_{i=1}^L \kappa_i\tilde{B}_{i,k}(r)\right| \le \e_k \Big( \Vert u_0 \Vert_{L^\infty(B_R)} + \sum_{i=1}^L \tilde{B}_{i,k}(r) \Big) + C \mu_{L,k}^{N-2-\frac{2}{p_k-2}}\]
for any $k \ge 0$ and every $r \in [0,R]$.
\end{prop}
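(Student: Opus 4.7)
The proof naturally splits into an $H^1$-level bubble decomposition and a subsequent upgrade to a sharp pointwise estimate. First, I would extract bubbles by iterated rescaling. Since the energy $E_k$ in \eqref{dec} is nonincreasing, $|u_k(0)|$ is the maximum of $|u_k|$ and tends to infinity by \eqref{blowup}. Setting $\mu_{1,k}^{-2/(p_k-2)} = |u_k(0)|$ and rescaling $v_k(x) = \mu_{1,k}^{2/(p_k-2)} u_k(\mu_{1,k} x)$, the radial profile $v_k$ satisfies $-\Delta v_k + \mu_{1,k}^2 v_k = |v_k|^{p_k-2} v_k$ on $B_{R/\mu_{1,k}}$ with $|v_k(0)| = 1$. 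Standard elliptic estimates together with the classification of radial entire solutions of the critical limit equation $-\Delta v = |v|^{2^*-2}v$ on $\R^N$ give $v_k \to \kappa_1 B_0$ in $C^2_{\mathrm{loc}}(\R^N)$ for some $\kappa_1 \in \{\pm 1\}$. The factor $\mu_{1,k}^{\frac{N-2}{2} - \frac{2}{p_k-2}}$ in the definition \eqref{bulles} of $\tilde B_{1,k}$ precisely corrects the mismatch between the conformal normalization of $B_0$ and the $p_k$-scaling.

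Second, I would iterate the extraction: if $u_k - u_0 - \kappa_1 \tilde B_{1,k}$ remains unbounded in $L^\infty$, I select its next concentration scale (which, by radiality and the fact that all blow-up is centered at the origin, must be strictly larger than $\mu_{1,k}$ — otherwise the first rescaling would have captured it) and extract a second bubble $\kappa_2 \tilde B_{2,k}$ with $\mu_{1,k} = o(\mu_{2,k})$. Each extracted bubble carries an $L^{2^*}$-mass equal to $K_N^{-N} + o(1)$, so by \eqref{energy} the process terminates after finitely many steps $L$. This produces the $H^1$-decomposition and the ordering of scales, and shows $u_k - u_0 - \sum_{i=1}^L \kappa_i \tilde B_{i,k} \to 0$ in $H^1(B_R)$.

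Third, and most technically, one must upgrade this $H^1$-convergence into the sharp $C^0$-estimate. I would proceed by contradiction: suppose there exists a sequence $r_k \in [0,R]$ where the claimed pointwise bound fails, i.e.\ where the error dominates both the bubble profiles and the residual $\mu_{L,k}^{N-2-2/(p_k-2)}$. Rescaling the residual $w_k = u_k - u_0 - \sum_i \kappa_i \tilde B_{i,k}$ at the scale dictated by $r_k$ would produce a new nontrivial profile on $\R^N$ — either a further nonzero bubble, which would contradict the termination of the extraction, or a bounded harmonic function on $\R^N$ or a half-space, which is ruled out by the Neumann condition and by radial symmetry. The workhorse is a partition of $[0,R]$ into annuli centered on each scale $\mu_{i,k}$, on which a radial Green's function representation of \eqref{LNTkr} together with a Moser-type iteration (based on the $L^{\alpha_k}$-control \eqref{energy}) provides the pointwise estimate. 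The residual term $C\mu_{L,k}^{N-2-2/(p_k-2)}$ appears naturally as the value of $\tilde B_{L,k}$ at the outer edge of its zone of influence.

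The principal obstacle is the asymptotically \emph{supercritical} regime $p_k > 2^*$, where the classical Trudinger--Br\'ezis--Kato bootstrap on $L^{2^*}$ no longer closes. The remedy lies precisely in the choice of $\alpha_k = \frac{N}{2}p_k - N$ in \eqref{energy}: this is the unique exponent preserved by the natural $p_k$-scaling and the unique one for which the bootstrap on \eqref{LNTk} is borderline. Combined with the Strauss-type radial decay, this $L^{\alpha_k}$-bound converts integrability into uniform pointwise control on fixed annuli, and the scaling-adapted profile $\tilde B_{i,k}$ (rather than $B_{i,k}$) is what makes every step — bubble identification, energy accounting, Green's function estimate, and final bootstrap — compatible with the supercritical perturbation.
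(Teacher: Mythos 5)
Your overall architecture --- iterated bubble extraction at the origin, termination by the energy bound \eqref{energy}, upgrade to a pointwise estimate via Green's function representation, and the role of $\alpha_k$ and Strauss in the supercritical regime --- matches the paper's design at a high level, but two of your technical choices would not close, and one of them is precisely the crux of the supercritical case.

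First, the iteration step. You propose to look at the residual $u_k - u_0 - \sum_i \kappa_i \tilde{B}_{i,k}$ and, if it remains unbounded, extract a new bubble from it, producing an ``$H^1$-decomposition''. This is a Struwe-type profile decomposition, and the paper explicitly avoids it: in the supercritical regime $p_k > 2^\ast$ the nonlinearity is not controlled by the $H^1$ norm, so such a decomposition is not available from the $H^1$ bound alone, and this is exactly the obstruction the paper's Step $1$ is designed to bypass. The paper's substitute is the global scale-invariant estimate $r^{2/(p_k-2)}|u_k(r)| \le C$ (obtained from the $L^{\alpha_k}$ bound via Sobolev embedding into $W^{1,t}$ and Strauss's lemma), and the inductive extraction (Step $3$) is a dichotomy on the scale-invariant quantities $r^{2/(p_k-2)}|u_k(r) - u_0(r)|$ and $r^{2/(p_k-2)}|u_k(r)-\tilde{B}_{i+1,k}(r)|$: either they vanish uniformly outside the bubble scales, or a rescaling at the offending point produces a new profile. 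Your sketch only mentions Strauss decay in passing, whereas it is the foundational estimate that makes the whole construction work without a Struwe decomposition.

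Second, the contradiction argument for the $C^0$-upgrade is not quite as you describe. Radiality forces blow-up at the origin, which is at fixed distance from $\partial B_R$, so the ``bounded harmonic function on a half-space'' alternative never arises; limiting profiles are identified directly as bubbles via a removable-singularity lemma, the $L^{\alpha_k}$ control, and the Caffarelli--Gidas--Spruck classification. Note also that the CGS classification covers only positive solutions, and the paper first rules out sign-changing radial entire solutions of $-\Delta v = |v|^{2^\ast-2}v$ by Pohozaev; your sketch skips this. Finally, the intermediate pointwise decay between consecutive scales is obtained in the paper by explicit barrier/comparison functions $\Phi_k^\varepsilon$ (Step $5$) rather than Moser iteration, and the remainder $C\mu_{L,k}^{N-2-\frac{2}{p_k-2}}$ is not merely ``the value of $\tilde{B}_{L,k}$ at the edge'': it comes from a separate contradiction argument showing $\eta_k(\delta)\lesssim \mu_{L,k}^{N-2-\frac{2}{p_k-2}}$ when $u_0\equiv 0$, which relies on the fact that $-\Delta + 1$ with Neumann condition has no kernel.
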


Proposition \ref{theorieC0} implies that for any sequence $(r_k)_{k \ge0}$ of points of $[0,R]$, we have 
\[ u_k(r_k) = u_0(r_k) + \sum_{i=1}^L \kappa_i \tilde{B}_{i,k}(r_k) + o \Big( \Vert u_0 \Vert_{L^\infty(B_R)}  + \sum_{i=1}^L \tilde{B}_{i,k}(r_k)  \Big) +  O(\mu_{L,k}^{N-2-\frac{2}{p_k-2}}) \]
as $k \to + \infty$. In other words, $u_k$ looks like, at a pointwise level and in strong spaces,  the sum of alternating positive bubbles centred at $0$ and concentrating at different rates.

We prove Proposition \ref{theorieC0} in the general context of possibly \emph{sign-changing} solutions, and not just positive ones. Pointwise descriptions of finite-energy blowing up \emph{positive} sequences of solutions of equations like \eqref{LNTk} have been known to hold for a while. A general result is available in \cite{DruetHebeyRobert}, and other possible references are \cite{LiZhu,DruetJDG, HebeyZLAM}. Specific reference for the Lin-Ni-Takagi problem are \cite{DruetRobertWei, ThizyLinNi}. Very recently, in \cite{Premoselli13}, this type of results has been extended to the general setting of possibly \emph{sign-changing} solutions, and applications to the precompactness of sign-changing solutions of Schr\"odinger-Yamabe type equation on manifolds have been given in \cite{PremoselliVetois2, PremoselliVetois3}. An analogous result for sign-changing solutions of Hardy-Schr\"odinger type equations is in \cite{GhoussoubMazumdarRobert}.

The proof of Proposition \ref{theorieC0} goes through an iterative construction of bubbling scales. The radiality of $u_k$ rules out non-trivial sign-changing bubbles and forces $u_k$ to blow up, at the origin, as a sum of positive bubbles with possibly alternating signs. This observation allows us to adapt in the context of \eqref{LNTk} the proof of \cite{GhoussoubMazumdarRobert}, which is our main inspiration. Note that our proof does not assume the existence of a Struwe-type decomposition  \cite{Struwe} for solutions of \eqref{LNTk}: for this reason it also works in the asymptotically supercritical case $p_k \ge 2^*$ (when \eqref{energy} is satisfied).

\medskip

The remaining part of the section is dedicated to the proof of Proposition \ref{theorieC0}. The proof goes through several steps. The arguments in what follows are written using the notations from \eqref{LNTk} since many of them would adapt to the non-radial case. We however explicitly mention when radiality is crucially used. 

\medskip

\textbf{Step $1$: A global scale-invariant estimate.} We claim that there exists $C >0$ such that, for all $k \ge0$ and $r \in [0,R]$,
\begin{equation} \label{C01}
r^{\frac{2}{p_k-2}} |u_k(r)| \le C .
\end{equation}
In the case where $p_k \le 2^*$ for all $k \ge 0$, the estimate \eqref{C01} follows directly from Strauss's lemma \cite{Strauss} in $H^1(B_R)$, i.e.
\begin{equation*} \label{C01-sub}
r^{\frac{2}{p_k-2}} |u_k(r)| \le  R^{\frac{2p_k-N(p_k-2)}{2(p_k-2)}}r^{\frac{N-2}{2}}|u_k(r)| \le C.
\end{equation*}
When $p_k < 2^*$, the power $\frac{2}{p_k-2}$ is clearly not sharp. 
In the supercritical case $p_k > 2^*$, we need a preliminary step. The assumption \eqref{energy} implies $(u_k)_k$ is bounded in $W^{2,s}(B_R)$ with $s=\frac{N(p_k-2)}{2p_k-2}$ (observe this exponent is strictly bigger than $1$). By Sobolev embeddings,  $(u_k)_k$ is therefore bounded in $W^{1,t}(B_R)$ with $t= \frac{Ns}{N-s} = \frac{N(p_k-2)}{p_k}$. Now, Strauss's radial lemma in $W^{1,t}(B_R)$, see for instance \cite[Corollaire II.1]{MR683027}, implies 
\begin{equation*} \label{C01-super}
r^{\frac{N-t}{t}} |u_k(r)| = r^{\frac{2}{p_k-2}} |u_k(r)| \le C .
\end{equation*}
In the case $p_k > 2^*$, the power $\frac{2}{p_k-2}$ is probably sharp.

\smallskip A first consequence of \eqref{LNTk}, \eqref{C01} and standard elliptic theory is that 
\begin{equation} \label{C02}
 u_k \to u_0 \quad \text{ in } C^2([\delta, R]), \text{ for any fixed } \delta >0 
 \end{equation}
as $k \to + \infty$. Hence blow-up only occurs at $0$. We now iteratively construct the blow-up scales of $(u_k)_{k \ge0}$. If $\ell \ge 1$ is an integer we consider the following property that we denote by $(H_{\ell})$:
\begin{equation} \label{Hl} \tag{$H_\ell$}
\begin{array}{l}
\textrm{there exist } \ell \textrm{ sequences } (\mu_{1, k})_{k \ge0}, \dots, (\mu_{\ell,k})_{k\ge0} %, (r_{1, k})_{k \ge0}, \dots, (r_{\ell,k})_{k\ge0} \\
 \textrm{ of positive numbers }
 \textrm{ satisfying: }\\[2mm]
 \quad \bullet\ \mu_{i,k} = o(\mu_{i+1,k}) \text{ as } k \to + \infty \text{ for any } 1 \le i \le \ell-1 \text{ and } \lim_{k \to + \infty} \mu_{\ell,k} = 0 \\[2mm]
\quad  \bullet\ \mu_{i,k}^{\frac{2}{p_k-2}} u_k \big( \mu_{i,k}\, \cdot) \to \kappa_i B_0 \text{ in } C^2_{loc}(\R^N \backslash \{0\}) \text{ for some } \kappa_i \in \{-1,1\} \\[2mm]
 \text{ as } k \to + \infty \text{ for any } 1 \le i \le \ell. \\ 
\end{array} 
\end{equation}

\medskip

\textbf{Step $2$: $(H_1)$ holds true.}  By \eqref{blowup} and \eqref{dec}, $\max_{B_R} |u_k| = |u_k(0)| \to + \infty$ as $k \to + \infty$. We can thus let 
\begin{equation} \label{m1k}
 \mu_{1,k}=\big( N(N-2)\big)^{\frac{(N-2)(p_k-2)}{8}} |u_k(0)|^{- \frac{p_k-2}{2}}, 
 \end{equation}
and $\mu_{1,k} \to 0$ as $k \to + \infty$. The sequence $v_{1,k}(x) : = \mu_{1,k}^{\frac{2}{p_k-2}} u_k \big(\mu_{1,k} x \big)$, defined for $x \in B_{\frac{R}{\mu_{1,k}}}$ then satisfies $|v_{1,k}(0)|=\big(N(N-2)\big)^{\frac{N-2}{4}}$, $\Vert v_{1,k} \Vert_{L^\infty(B_{\frac{R}{\mu_{1,k}}})} \le \big(N(N-2)\big)^{\frac{N-2}{4}}$ and, by \eqref{LNTk}, 
\[ - \triangle v_{1,k} + \mu_{1,k}^2 v_{1,k} = v_{1,k}^{p_k-1} \quad \text{ in } B_{\frac{R}{\mu_{1,k}}}.\]
By standard elliptic theory, and since $u_k$ is radial, $v_{1,k}$ converges in $C^2_{loc}(\R^N)$, up to a subsequence, towards a radial solution $v_1$ of $- \triangle v_1 = |v_1|^{2^*-2}v_1$ in $\R^N$ with $|v_1(0) |= \big(N(N-2)\big)^{\frac{N-2}{4}}$ and $\Vert v_1 \Vert_{^\infty(\R^N)}\le \big(N(N-2)\big)^{\frac{N-2}{4}}$. By the Pohozaev identity \cite{Pohozaev} there do not exist radial sign-changing solutions of this equation in $\R^N$. Hence $v_1$ is of constant sign, and by the classification result of \cite{CaGiSp} there exists $\kappa_1 \in \{\pm1\}$ such that $v_1(r) = \kappa_1 B_0(r)$, where $B_0$ is as in \eqref{B0}. Since $v_{1,k} \to B_0$ in $C^2_{loc}(\R^N)$ this proves $(H_1)$.

\medskip

\textbf{Step $3$: the induction property.}

\medskip

\textbf{Step $3.1$.} We  claim that the following holds true: assume that $(H_\ell)$ is true for some $\ell \ge 1$. Then either 
\begin{equation} \label{C03}
 \lim_{M \to + \infty} \limsup_{k \to + \infty} \max_{[M \mu_{\ell,k}, R]} r^{\frac{2}{p_k-2}} |u_k(r) - u_0(r)| = 0 
 \end{equation}
or $(H_{\ell+1})$ is true. 

\smallskip

\begin{proof}[Proof of \eqref{C03}]
We proceed by contradiction and, up to passing to a subsequence, assume that there exists $s_k \in (0,R]$ with $s_k >> \mu_{\ell,k}$ and 
\begin{equation} \label{Step31}
s_k^{\frac{2}{p_k-2}}|u_k(s_k) - u_0(s_k)| \to a >0 
\end{equation}  as $k \to + \infty$. Remark that the right-hand side of \eqref{Step31} is always finite by \eqref{C01}. By \eqref{C02} $s_k \to 0$, thus $s_k^{\frac{2}{p_k-2}}|u_k(s_k) |\to a$ and $|u_k(s_k)| \to + \infty$. For all $r \in [0, \frac{R}{s_{k}}]$, we let 
\[ w_k(r) = s_k^{\frac{2}{p_k-2}} u_k\big( s_k r\big). \]
By \eqref{LNTk} and  \eqref{C01} $w_k$ satisfies $- \triangle w_k + s_{k}^2 w_k = |w_k|^{p_k-2}w_k$ in $B_{\frac{R}{s_k}}$ and $|w_k(r)| \le Cr^{- \frac{2}{p_k-2}}$ for any $0 < r \le \frac{R}{s_k}$. By standard elliptic theory $w_k$ thus converges, in $C^2_{loc}(\R^N \backslash \{0\})$, towards a function $w_0$ satisfying $|w_0(1) |= a >0$ and 
\[ - \triangle w_0 = |w_0|^{2^*-2}w_0 \quad \text{ in } \R^N \backslash \{0\}. \]
Using \eqref{energy} we have, for $k \ge 0$ and for all $0<\delta <1$ fixed, 
\[ \begin{aligned}
 \int_{B_{\frac{1}{\delta}} \backslash B_\delta}  |w_0|^{2^*} dx & = \int_{B_{\frac{1}{\delta}} \backslash B_\delta} |w_k|^{\alpha_k} dx + o(1) \\
 & = s_k^{\frac{2\alpha_k}{p_k-2} - N} \int_{B_{\frac{\nu_{\ell+1,k}}{\delta}} \backslash B_{\delta \nu_{\ell+1,k}}} |u_k|^{\alpha_k} dx +o(1)\\
 & \le C s_k^{\frac{2\alpha_k}{p_k-2} - N} +o(1). 
\end{aligned} \]
As explained in the proof of Step $1$ we have $s_k^{\frac{2\alpha_k}{p_k-2} - N} \le 1$ so that, after passing to the limit as $k\to + \infty$ and then as $\delta \to 0$, $w_0 \in L^{2^*}(\R^N)$. Classical removable singularity results (see e.g. Lemma $2.1$ in \cite{CaGiSp}) show that $w_0$ satisfies $- \triangle w_0 = |w_0|^{2^*-2}w_0$ in $\R^N$ and is thus at least of class $C^2$. As in the proof of Step $2$, since $u_k$ is radial $w_0$ is also radial and in particular does not change sign. Since $|w_0(1)| = a >0$ the classification result of \cite{CaGiSp} again shows that 
\[ w_0(r) = \kappa_{\ell+1}\frac{\lambda^{\frac{N-2}{2}}}{\big( \lambda^2 + \frac{r^2}{N(N-2)} \big)^{\frac{N-2}{2}}} \]
for some $\lambda >0$ depending on $a$ and some $\kappa_{\ell+1} \in \{\pm1\}$. It remains to let 
\[\mu_{ \ell+1,k} =\big(N(N-2)\big)^{\frac12}  \lambda s_k.\]
Since $s_k >> \mu_{\ell,k}$ and $s_k = o(1)$ this proves that $(H_{\ell+1})$ is satisfied for the sequences $(\mu_{1,k})_{k \ge0}, \dots, (\mu_{\ell,k})_{k \ge0}$ and $ (s_k)_{k \ge 0}$. 
\end{proof} 

\medskip

\textbf{Step $3.2$.} We  claim that the following holds true: assume that $(H_\ell)$ is true for some $\ell \ge 1$ and let $\delta >0$. Then either 
\begin{equation} \label{C032}
 \lim_{M \to + \infty} \limsup_{k \to + \infty} \max_{[M \mu_{i,k}, \delta \mu_{i+1,k} ]} r^{\frac{2}{p_k-2}} |u_k(r) - \tilde{B}_{i+1,k}(r)| = 0 
 \end{equation}
for all $1 \le i \le \ell-1$ or $(H_{\ell+1})$ is true. 

\begin{proof}[Proof of \eqref{C032}.]
We proceed again by contradiction and assume that there exists $i \in \{1, \dots, \ell-1\}$ and a sequence $(s_k)_{k \ge0}$ with $\frac{s_k}{\mu_{i,k}} \to + \infty$ and $s_k \le \delta \mu_{i+1,k}$ such that, up to a subsequence,
\begin{equation} \label{step32}
\lim_{k \to + \infty}  s_k^{\frac{2}{p_k-2}} |u_k(s_k) - \tilde{B}_{i+1,k}(s_k)| = a >0.
  \end{equation}
Note that the limit is finite by \eqref{bulles} and \eqref{C01}. By the second bullet in $(H_\ell)$ we also have $s_k = o(\mu_{i+1,k})$, and hence \eqref{step32} becomes, with \eqref{bulles},
\[   \lim_{k \to + \infty}  s_k^{\frac{2}{p_k-2}} |u_k(s_k)|  = a >0.\]
For all $r \in [0, \frac{R}{s_k}]$, we let 
\[ w_k(r) = s_k^{\frac{2}{p_k-2}} u_k\big( s_k r\big). \]
Mimicking the proof of \eqref{C03} shows that $(H_{\ell+1})$ is then satisfied for the sequences $(\mu_{1,k})_{k \ge0}, \dots, (\mu_{i,k})_{k\ge0}$, $\big(\big(N(N-2)\big)^{\frac12} \lambda s_k\big)_{k \ge0}$, $(\mu_{i+1,k})_{k \ge0}, \dots, (\mu_{\ell,k})_{k \ge0}$, where $\lambda$ is some positive number that depends on $a$. 
\end{proof}
\medskip

\textbf{Step $4$: the number of concentration points is finite.} We let 
$$L = \max \{ \ell \ge 1 \text{ such that } (H_\ell) \text{ holds true} \}.$$ We claim that $L < + \infty$ and that, for any fixed $ \delta >0$,
\begin{equation} \label{C04}
\begin{aligned}
&  \lim_{M \to + \infty} \limsup_{k \to + \infty} \max_{[M \mu_{i,k}, \delta \mu_{i+1,k} ]} r^{\frac{2}{p_k-2}} |u_k(r) - \tilde{B}_{i+1,k}(r)| = 0,  \quad  1 \le i \le L-1  \\
& \lim_{M \to + \infty} \limsup_{k \to + \infty} \max_{[M \mu_{L,k}, R]} r^{\frac{2}{p_k-2}} |u_k(r) - u_0(r)| = 0 .
 \end{aligned}
 \end{equation}

\begin{proof}[Proof of the fact that $L < + \infty$ and of \eqref{C04}.]
Let $\ell \ge 1$ for which $(H_\ell)$ is true and let $0<\delta <1$. Then by $(H_\ell)$ we can write that 
\[ \begin{aligned}
\int_{B_R}| u_k|^{\alpha_k} dx &\ge \sum_{i=1}^\ell \int_{B_{\frac{\mu_{i,k}}{\delta}} \backslash B_{\delta \mu_{i,k}}} |u_k|^{\alpha_k} dx \\
& = \sum_{i=1}^\ell \mu_{i,k}^{N - \frac{2\alpha_k}{p_k-2} } \Big( \int_{B_{\frac{1}{\delta}} \backslash B_{\delta }} B_0^{2^*} dx +o(1) \Big) \\
& = \sum_{i=1}^\ell \mu_{i,k}^{N - \frac{2\alpha_k}{p_k-2} } \Big( K_N^{-N} + \e_\delta +o(1) \Big),
\end{aligned} \]
where $\e_\delta \to 0$ as $\delta \to 0$. As before, $N - \frac{2\alpha_k}{p_k-2} \le 0$, so that $\mu_{i,k}^{N-\frac{2\alpha_k}{p_k-2}} \ge 1$ for all $1 \le i \le \ell$. Passing to the limit as $k \to +\infty$ and as $\delta \to 0$ we thus obtain
\[ \ell \le C K_N^{N} \]
where $C$ is the bound on the energy in \eqref{energy}. This shows that $L< \infty$. Equation \eqref{C04} then follows from the maximality of $L$, \eqref{C03} and \eqref{C032}.
\end{proof}

\smallskip

In what follows, for any $0< \delta <R$ fixed, we let 
\[ \eta_k(\delta) = \max_{r \in [\delta, R]} |u_k(r)|. \]

\medskip

\textbf{Step $5$: Interpolation inequalities.}

\medskip

In this step we obtain a first set of pointwise estimates on $|u_k|$ that interpolate between \eqref{C03} and \eqref{C032}.

\medskip

\textbf{Step $5.1$: interpolation between $\tilde{B}_{L,k}$ and $u_0$.} Let $L$ be as in Step $4$. We claim that for any $0 < \e < \frac12$ fixed there exists $\delta_{\e}$ >0 and $C_{\e} >0$ such that, for any $r \in [\mu_{L,k}, R]$,
\begin{equation}\label{C06}
| u_k(r)| \le C_{\e} \Big( \mu_{L,k}^{\frac{2}{p_k-2}(1-2\e)}r^{-\frac{4(1-\e)}{p_k-2}} + \eta_k(\delta_{\e})  r^{- \frac{4\e}{p_k-2}} \Big). 
\end{equation}

\begin{proof}[Proof of \eqref{C06}]
Let $0 < \e < \frac12$ and $0 < \delta <R$ be fixed. For $r \in [\mu_{L,k}, R]$ we let 
\[ \Phi_k^{\e}(r) = \mu_{L,k}^{\frac{2}{p_k-2}(1-2\e)} G(r)^{\frac{4(1-\e)}{(N-2)(p_k-2)}} + \eta_k(\delta)G(r)^{\frac{4\e}{(N-2)(p_k-2)}},\]
where, for $r = |x|$  we have let $G(r) =  \frac{1}{(N-2) \omega_{N-1}} |x|^{2-N}$, where $\omega_{N-1}$ is the area of $\mathbb{S}^{N-1} \subset \R^N$.  
We let $s_k \in [\mu_{L,k}, R]$ be such that
\[ \frac{u_k(s_k)}{\Phi_k^{\e}(s_k)} = \max_{s \in [\mu_{L,k},R]} \frac{u_k(s)}{\Phi_k^{\e}(s)}.\]
We first assume that, up to a subsequence, 
\begin{equation} \label{step512}
\mu_{L,k} =o( s_k) \quad \text{ and } \quad s_k \le \delta
\end{equation}
 as $k \to + \infty$. Let $x_k \in B_R \backslash B_{\mu_{L,k}}$ be such that $s_k = |x_k|$. By definition of $s_k$ we have 
\begin{equation} \label{step52}
 \frac{- \triangle u_k(x_k)}{u_k(x_k)} \ge \frac{- \triangle \Phi_k^{\e}(x_k)}{ \Phi_k^{\e}(x_k)}.
 \end{equation}
On the one side, straightforward computations show that 
\[  s_k^2 \frac{- \triangle \Phi_k^{\e}(x_k)}{ \Phi_k^{\e}(x_k)} \to (n-2)^2\e(1-\e) >0 \]
as $k \to + \infty$. On the other side, \eqref{LNTk}, \eqref{C04} and \eqref{step512} show, since $s_k \to 0$, that 
\[ s_k^2  \frac{- \triangle u_k(x_k)}{u_k(x_k)} \le  \delta^{2} \Vert u_0 \Vert_{L^\infty(B_R)}^{\frac{4}{N-2}} + o(1) \]
as $k \to + \infty$. If $u_0 \equiv 0$ we have a contradiction. If $u_0 \not \equiv 0$ we can now choose $\delta = \delta_\e$ so that $\delta_\e^{2} \Vert u \Vert_{L^\infty(B_R)}^{\frac{4}{N-2}} \le \frac12 \e(1-\e)$. We then get a contradiction with \eqref{step52}, which shows that \eqref{step512} cannot happen.

Thus, we either have $\frac{s_k}{\mu_{L,k}} \to C \ge 1$ or $s_k \ge \delta_\e$, for some $\delta_\e >0$, up to a subsequence. In the first case \eqref{C06} follows from the second bullet in $(H_L)$, while in the second case \eqref{C06} follows from the definition of $\eta_k(\delta_\e)$. This shows that
\begin{equation*}
 u_k(r) \le C_{\e} \Big( \mu_{L,k}^{\frac{2}{p_k-2}(1-2\e)}r^{-\frac{4(1-\e)}{p_k-2}} + \eta_k(\delta_{\e})  r^{- \frac{4\e}{p_k-2}} \Big) 
\end{equation*}
for any $r \in [\mu_{L,k}, R]$. Since $-u_k$ also satisfies \eqref{energy}, \eqref{blowup} and \eqref{LNTkr} the same inequality remains true also for $-u_k$. Hence \eqref{C06} holds true.
\end{proof}

\medskip

\textbf{Step $5.2$: interpolation between $\tilde{B}_{i,k}$ and $\tilde{B}_{i+1,k}$.}  Let $L$ be as in Step $4$ and $i \in \{1, \dots, L-1\}$. We claim that for any $0 < \e < \frac12$ there exists a constant $C_{\e} >0$ such that, for any $r \in [\mu_{i,k}, \mu_{i+1,k}]$,
\begin{equation}\label{C062}
 u_k(r) \le C_{\e} \Big( \mu_{i,k}^{\frac{2}{p_k-2}(1-2\e)}r^{-(1-\e)\frac{4}{p_k-2}} + \mu_{i+1,k}^{- \frac{2}{p_k-2}}\left( \frac{\mu_{i+1,k}}{r} \right)^{ \frac{4\e}{p_k-2}} \Big). 
\end{equation}

Remark that, when $r= \mu_{i+1,k}$, $u_k$ is of order $\mu_{i+1,k}^{- \frac{2}{p_k-2}}$ by the second bullet in $(H_L)$. Estimate \eqref{C062} is therefore the direct analogue of \eqref{C06} where $\eta_k(\delta)$ has been formally replaced by $\mu_{i+1,k}^{- \frac{2}{p_k-2}}$ (and  $r^{- \frac{4\e}{p_k-2}} $ has been replaced by $ \mu_{i+1,k}^{ \frac{4\e}{p_k-2}} r^{- \frac{4\e}{p_k-2}}$ to attain the value $\mu_{i+1,k}^{- \frac{2}{p_k-2}}$ at the correct scale).

\begin{proof}[Proof of \eqref{C062}]
The proof is very similar to the proof of \eqref{C06}. Let $i \in \{1, \dots, L-1\}$ and $0 < \e < \frac12$ be fixed. For $r \in [\mu_{i,k}, \mu_{i+1,k}]$ we let 
\[ \Phi_{i,k}^{\e}(r) = \mu_{i,k}^{\frac{2}{p_k-2}(1-2\e)} G(r)^{\frac{4(1-\e)}{(N-2)(p_k-2)}} + \mu_{i+1,k}^{- \frac{2(1-2\e)}{p_k-2}} G(r)^{\frac{4\e}{(N-2)(p_k-2)}},\]
where $G(r)$ is as in the proof of \eqref{C06}. We let $s_k \in [\mu_{L,k}, R]$ be such that
\[ \frac{u_k(s_k)}{\Phi_{i,k}^{\e}(s_k)} = \max_{s \in [\mu_{L,k},R]} \frac{u_k(s)}{\Phi_{i,k}^{\e}(s)}.\]
We first assume that, up to a subsequence, $\mu_{i,k} =o( s_k)$ and $s_k = o(\mu_{i+1,k})$ as $k \to + \infty$. Let $x_k$ be such that $s_k = |x_k|$. By definition of $s_k$ we have 
\begin{equation*}
 \frac{- \triangle u_k(x_k)}{u_k(x_k)} \ge \frac{- \triangle \Phi_{i,k}^{\e}(x_k)}{ \Phi_{i,k}^{\e}(x_k)}.
 \end{equation*}
Straightforward computations show again that 
\[  s_k^2 \frac{- \triangle \Phi_{i,k}^{\e}(x_k)}{ \Phi_{i,k}^{\e}(x_k)} \to  (n-2)^2\e(1-\e) >0 \]
as $k \to + \infty$, while \eqref{LNTk} and \eqref{C04} again show that 
\[ s_k^2  \frac{- \triangle u_k(x_k)}{u_k(x_k)} = o(1) \] 
as $k \to + \infty$, a contradiction. Thus, we either have $\frac{s_k}{\mu_{i,k}} \to C \ge 1$ or $\frac{s_k}{\mu_{i+1,k}} \to C \le 1$ up to a subsequence. In both cases  we obtain that 
$$\frac{u_k(s_k)}{\Phi_{i,k}^{\e}(s_k)}  \le C_\e$$
for some positive constant $C_\e$. The inequality on $-u_k$ is obtained as in the proof of \eqref{C06} and \eqref{C062} follows.
\end{proof}

\textbf{Step $6$: improved pointwise estimates.} Let $0 < \delta < \frac{R}{2}$ be fixed. We prove that there exists $C_\delta >0$ such that, for any $k \ge 0$ and $r \in [0,R]$,
\begin{equation} \label{C07}
\big| u_k(r) \big|\le C_\delta \Big( \sum_{i=1}^L \tilde{B}_{i,k}(r) + \eta_k(\delta) \Big)
\end{equation}
holds, where $\tilde{B}_{i,k}$ is as in \eqref{bulles}.

\begin{proof}[Proof of \eqref{C07}]
Let $G_1$ be the Green's function of $- \triangle +1$ in $B_R$ with Neumann boundary condition. Recall that there exists $C>0$ such that $\big| G_1(x,y)  \big|\le C |x-y|^{2-n}$ for any $x,y \in B_R, x \neq y$ (see the arguments in \cite{RobertNeumann}). Let $(s_k)_{k \ge 0}, s_k = |x_k|$, be any sequence of points in $[0, R]$. First, if $s_k \in [R/2,R]$, \eqref{C07} follows from the definition of $\eta_k(\delta)$. We can thus assume that $s_k \in [0,R/2]$ for all $k \ge 0$. A representation formula for $u_k$ at $s_k$ (or $x_k$) then gives, thanks to \eqref{LNTk} and $(H_L)$,
\begin{equation} \label{step61}
\begin{aligned}
 u_k(r_k) & = \int_{B_{\mu_{1,k}}} G_1(x_k, y) |u_k(|y|)|^{p_k-2}u_k(|y|) dy  \\
&+ \sum_{i=2}^L  \int_{B_{\mu_{i,k}} \backslash B_{\mu_{i-1,k}}} G_1(x_k, y) |u_k(|y|)|^{p_k-2}u_k(|y|) dy \\
&+ \int_{B_R \backslash B_{\mu_{L,k}}} G_1(x_k, y) |u_k(|y|)|^{p_k-2}u_k(|y|) dy.
\end{aligned} \end{equation}
Straightforward computations with the second bullet of $(H_L)$ and \eqref{C062} show that 
\begin{equation} \label{step62}
\left| \int_{B_{\mu_{1,k}}} G_1(x_k, y) |u_k(|y|)|^{p_k-2}u_k(y) dy \right| \le C \tilde{B}_{1,k}(s_k)
\end{equation}
(see e.g. \cite{HebeyZLAM}, Proposition $6.1$) and that, for $2 \le i \le L-1$,
\begin{equation} \label{step63}
 \left| \int_{B_{\mu_{i,k}} \backslash B_{\mu_{i-1,k}}} G_1(x_k, y) |u_k(|y|)|^{p_k-2}u_k(y) dy \right| \le C \Big( \tilde{B}_{i-1,k}(s_k) + \tilde{B}_{i,k}(s_k) \Big)
\end{equation}
 for some $C>0$ independent of $k$. Similarly \eqref{C06} shows, with straightforward computations, that 
 \begin{equation} \label{step64}
\left| \int_{B_R \backslash B_{\mu_{L,k}}} G_1(x_k, y) |u_k(|y|)|^{p_k-2}u_k(y) dy \right| \le C \Big( \tilde{B}_{L,k}(s_k) + \eta_k(\delta)^{p_k-1} \Big)
\end{equation}
 for some $C>0$ independent of $k$. Since $\eta_k(\delta)^{p_k-1} \le C \eta_k(\delta)$ by \eqref{C02}, plugging \eqref{step62}, \eqref{step63} and \eqref{step64} in \eqref{step61} proves \eqref{C07}.  
\end{proof}

\medskip

We are now in position to conclude the proof of Proposition \ref{theorieC0}.

\begin{proof}[End of the proof of Proposition \ref{theorieC0}]
Let $0 < \delta < \frac{R}{2}$ be fixed.  We first prove that, if $u_0 \equiv 0$, we have 
\begin{equation} \label{prop1}
\eta_k(\delta) \le C_{\delta}' \mu_{L,k}^{N-2 -\frac{2}{p_k-2}}
\end{equation}
for some $C_{\delta}' >0$. We proceed by contradiction and assume that, up to a subsequence, 
\begin{equation*} 
\eta_k(\delta) >> \mu_{L,k}^{N-2-\frac{2}{p_k-2}}
\end{equation*} 
as $k \to + \infty$, and let $\tilde{u}_k = \frac{u_k}{\eta_k(\delta)}$. We then have $\eta_k(\delta) >> \mu_{i,k}^{N-2-\frac{2}{p_k-2}}$ for any $1 \le i \le L$ and, by \eqref{C07} and the expression of $\tilde{B}_{i,k}$ in \eqref{bulles}, we have
\begin{equation} \label{contratildeuk}
 \Vert \tilde{u}_k \Vert_{L^\infty(B_R \backslash B_\sigma)} \le C_\delta+ o(1) 
 \end{equation}
for any fixed $0 <\sigma <R$. Since $u_0 \equiv 0$ we have, by \eqref{C02}, $\eta_k(\delta) = o(1)$ as $k \to + \infty$. Therefore, by \eqref{LNTk} and standard elliptic theory, $\tilde{u}_k \to \tilde{u}_0$ in $C^2_{loc}(B_R \backslash \{0\})$ where $\tilde{u}_0$ satisfies
\[ \left \{ \begin{aligned}
- \triangle \tilde{u}_0  + \tilde{u}_0 & = 0 & \textrm{ in } B_R \backslash \{0\} \\
\partial_\nu \tilde{u}_0 & = 0 & \textrm{ in } \partial B_R
\end{aligned} \right. \]
Passing \eqref{contratildeuk} to the limit shows that $|\tilde{u}_0| \le C_\delta$ in $B_R \backslash \{0\}$, so the singularity at $0$ is removable. Hence $\tilde{u}_0$ is smooth in $B_R$ and satisfies 
\[ \left \{ \begin{aligned}
- \triangle \tilde{u}_0  + \tilde{u}_0 & = 0 & \textrm{ in } B_R \\
\partial_\nu \tilde{u}_0 & = 0 & \textrm{ in } \partial B_R
\end{aligned} \right. . \]
Integrating by parts yields $\tilde{u}_0 \equiv 0$, which is a contradiction since, by definition, $|\tilde{u}_0(s_0)| = 1$ for some $s_0 \in [\delta, R]$. This proves \eqref{prop1}

\medskip

If now $u_0 \not \equiv 0$ in $B_R$, we have $\eta_k(\delta) \le \Vert u_0 \Vert_{L^\infty(B_R)} + o(1)$ by \eqref{C02}. Coming back to \eqref{C07} with \eqref{prop1} we have thus shown that, whatever $u_0$ is, there exists $C>0$ such that, for any $k \ge 0$, 
\begin{equation} \label{prop2}
\big| u_k(r) \big| \le C \Big( \sum_{i=1}^L \tilde{B}_{i,k}(r) + \Vert u_0 \Vert_{L^\infty(B_R)} + \mu_{L,k}^{N-2-\frac{2}{p_k-2}} \Big)
\end{equation} 
holds for any $r \in [0,R]$. We now improve this estimate into an optimal expansion of $u_k$. We let again $s_k = |x_k|$ be a sequence in $[0,R]$. If $s_k \not \to 0$, then 
\begin{equation} \label{prop21}
 u_k(s_k) = u_0(s_k) + o\big( \Vert u_0 \Vert_{L^\infty(B_R)} \big) + O(\mu_{L,k}^{N-2-\frac{2}{p_k-2}}) 
 \end{equation}
 by \eqref{C02} and \eqref{prop2}, which proves Proposition \ref{theorieC0} when $s_k \not \to 0$. We thus assume that  $s_k =o(1)$ as $k \to + \infty$. Let $0 < \sigma < 1$. Note first that 
\begin{equation} \label{prop3}
 \int_{B_{\frac{\mu_{L,k}}{\sigma}}} G_1(x_k, y) |u_0|^{2^*-2}(y) u_0(|y|) dy = o\big( \Vert u_0 \Vert_{L^\infty(B_R)} \big) 
 \end{equation}
as $k \to + \infty$. This is obvious if $u_0 \equiv 0$ and, if $u_0 \not \equiv 0$, it follows since $\mu_{L,k} = o(1)$. Taking the difference of two representation formulae, one for $u_k$ and one for $u_0$, we then obtain with \eqref{prop3} that
\begin{equation} \label{prop4}
\begin{aligned}
 u_k(s_k) - u_0(s_k) 
 & = \int_{B_{\frac{\mu_{1,k}}{\sigma}}} G_1(x_k, y) |u_k(|y|)|^{p_k-2}u_k(|y|) dy + o\big( \Vert u_0 \Vert_{L^\infty(B_R)} \big)   \\
&+ \sum_{i=2}^L  \int_{B_{\frac{\mu_{i,k}}{\sigma}} \backslash B_{\frac{\mu_{i-1,k}}{\sigma}}} G_1(x_k, y) |u_k(|y|)|^{p_k-2}u_k(|y|) dy \\
&+ \int_{B_R \backslash B_{\frac{\mu_{L,k}}{\sigma}}} G_1(x_k, y) \Big( |u_k(|y|)|^{p_k-2}u_k(|y|)  - |u_0(|y|)|^{2^*-2}u_0(|y|)  \Big) dy.
\end{aligned} \end{equation}
Using \eqref{prop2} and the dominated convergence theorem, straightforward computations show that
\begin{equation} \label{prop5} 
\begin{aligned}
 \int_{B_{\frac{\mu_{1,k}}{\sigma}}} G_1(x_k, y) |u_k(|y|)|^{p_k-2}u_k(|y|) dy & = \big(1+O(\e_\sigma) \big) \tilde{B}_{1,k}(s_k) \\
\end{aligned}
 \end{equation}
and that, for any $1 \le i \le L$,
\begin{equation} \label{prop6}
\begin{aligned}
   &\int_{B_{\frac{\mu_{i,k}}{\sigma}} \backslash B_{\frac{\mu_{i-1,k}}{\sigma}}} G_1(x_k, y) |u_k(|y|)|^{p_k-2}u_k(|y|) dy \\
   & = \int_{B_{\sigma \mu_{i,k}} \backslash B_{\frac{\mu_{i-1,k}}{\sigma}}} G_1(x_k, y) |u_k(|y|)|^{p_k-2}u_k(|y|) dy \\
   & +  \int_{B_{\frac{\mu_{i,k}}{\sigma}} \backslash B_{\sigma \mu_{i,k}}} G_1(x_k, y) |u_k(|y|)|^{p_k-2}u_k(|y|) dy  \\
   & =  \big(1 + O(\e_\sigma) \big) \tilde{B}_{i,k}(s_k) + O \big( \e_\sigma \tilde{B}_{i-1,k}(s_k) \big) + O \big( \mu_{L,k}^{N-2-\frac{2}{p_k-2}}\delta_{iL} \big)
\end{aligned}
\end{equation}
hold as $k \to + \infty$ (see e.g. Proposition $6.1$ and lemmatas $7.6$ and $7.7$  in \cite{HebeyZLAM}). In \eqref{prop5} and \eqref{prop6} $\e_\sigma$ denotes a quantity that satisfies $\lim_{\sigma \to 0} \e_\sigma = 0$ and the notation $\delta_{iL}$ in \eqref{prop6} means that the last term only appears if $i=L$. We now estimate the last integral in \eqref{prop4}. Assume first that $u_0 \not \equiv 0$. Then, using \eqref{C02} and \eqref{prop2}, we have 
\begin{equation} \label{prop7}
\begin{aligned}
 & \int_{B_R \backslash B_{\frac{\mu_{L,k}}{\sigma}}} G_1(x_k, \cdot)  \Big( |u_k|^{p_k-2}u_k  - |u_0|^{2^*-2}u_0  \Big) dy \\
 & =  \int_{B_{\sigma} \backslash B_{\frac{\mu_{L,k}}{\sigma}}} G_1(x_k, \cdot)  \Big( |u_k|^{p_k-2}u_k  - |u_0|^{2^*-2}u_0 \Big) dy + o(\Vert u_0 \Vert_{L^\infty(B_R)}) \\
 & = O \big(\e_\sigma \tilde{B}_{L,k}(s_k) \big) + o(\Vert u_0 \Vert_{L^\infty(B_R)})
\end{aligned}
\end{equation}
as $k\to + \infty$. Assume finally that $u_0 \equiv 0$. Then, with \eqref{prop2}, straightforward computations show that 
 \begin{equation} \label{prop8}
 \int_{B_R \backslash B_{\frac{\mu_{L,k}}{\sigma}}} G_1(x_k, \cdot) \big( |u_k|^{p_k-2} u_k - |u_0|^{2^*-2}u_0  \big)dy = O \big(\e_\sigma \tilde{B}_{L,k}(s_k) \big)
 \end{equation}
as $k \to + \infty$. Plugging \eqref{prop5}, \eqref{prop6}, \eqref{prop7} and \eqref{prop8} in \eqref{prop4} shows that for any $0 < \sigma < 1$ fixed and any $k \ge 0$ large enough we have  
\[ u_k(s_k) = u_0(s_k) + \big( 1 + O(\e_\sigma) \big) \sum_{i=1}^L \tilde{B}_{i,k}(s_k) + O\big(\e_\sigma \Vert u_0 \Vert_{L^\infty(B_R)} \big)+  O(\mu_{L,k}^{N-2-\frac{2}{p_k-2}}) ,\]
where $\lim_{\sigma \to 0} \e_\sigma = 0$. In other words, and up to passing to a subsequence, there is a  constant $C>0$ and a sequence $(\e_k)_{k \ge0}$ of positive numbers  converging to $0$ as $k \to + \infty$ such that 
\[ \Big| u_k(s_k) - u_0(s_k) - \sum_{i=1}^L \tilde{B}_{i,k}(s_k)\Big| \le  \e_k  \Big(  \sum_{i=1}^L \tilde{B}_{i,k}(s_k) + \Vert u_0 \Vert_{L^\infty(B_R)} \Big)+  C\mu_{L,k}^{N-2-\frac{2}{p_k-2}} .\]
That the sequence $(\e_k)_{k \ge0}$ and the constant $C$ can be made independent of the choice of $(s_k)_{k \ge0}$, and thus only dependent on the choice of the subsequence in $(u_k)_{k \ge0}$, follows from standard arguments (see e.g. \cite{HebeyZLAM}, proof of Proposition $7.3$). Together with \eqref{prop21} this concludes the proof of Proposition \ref{theorieC0}.
 \end{proof}
 
Pointwise estimates on the derivatives of $u_k$ are also obtained thanks to Proposition \ref{theorieC0}. Differentiating the representation formula \eqref{prop4} with respect to $s$ and using Proposition \ref{theorieC0} we obtain that there exists
a constant $C>0$ such that, up to passing to a subsequence for $(u_k)_{k\ge0}$,
\begin{equation} \label{estder}
|u_k'(r)| \le C \Bigg( \sum_{i=1}^L \frac{\mu_{i,k}^{N-2- \frac{2}{p_k-2}}}{\big( \mu_{i,k} + r \big)^{N-1}} + \Vert u_0 \Vert_{L^\infty(B_R)} \Bigg) \quad \text{ holds for any } r \in [0,R].
\end{equation}

The result of Proposition \ref{theorieC0} is robust and adapts to more general settings. Let for instance $h_0$ be a smooth radial function in $B_R$ such that $- \triangle + h_0$ has no kernel on $H^1$ functions with Neumann boundary condition. By this we mean that there does not exists any non-zero $u \in H^1(B_R)$ satisfying 
\begin{equation} \label{nokernel}
 \left \{ \begin{aligned}
- \triangle u + h_0 u & = 0 & \textrm{ in } B_R \\
\partial_\nu u & = 0 & \textrm{ in } \partial B_R
\end{aligned} \right.\end{equation}
in a weak sense. Such an operator $- \triangle + h_0$ then admits a uniquely defined radial Green's function $G_{h_0}$ in $B_R$ (see e.g. the arguments in \cite{RobertNeumann}). In particular, it comes with a representation formula as in \eqref{prop4}. A careful inspection of the proof of Proposition \ref{theorieC0} then shows that the exact statement of Proposition \ref{theorieC0} remains true if $- \triangle +1$ in \eqref{LNTk} is replaced by any operator $- \triangle + h_0$ with no kernel, for a smooth radial function $h_0$ in $[0,R]$. 

If $- \triangle + h_0$ has a kernel in the sense of \eqref{nokernel} and $(u_k)_{k \ge0}$ is a sequence of possibly sign-changing radial and $C^2(\overline{B_R})$ solutions of 
\begin{equation*}
\left \{ \begin{aligned}
- \triangle u_k + h_0 u_k & = |u_k|^{p_k-2}u_k & \textrm{ in } B_R \\
\partial_\nu u_k & = 0 & \textrm{ in } \partial B_R
\end{aligned} \right.
\end{equation*} 
satisfying \eqref{energy} and \eqref{blowup}, the proof of Proposition \ref{theorieC0} again adapts and shows that $u_k$ satisfies
\[ \left| u_k(r) - u_0(r) - \sum_{i=1}^L \kappa_i\tilde{B}_{i,k}(r)\right| \le \e_k \Big( 1 + \sum_{i=1}^L \tilde{B}_{i,k}(r) \Big) \]
for some $C>0$, $L \ge 1$ and $L$ sequences $(\mu_{1,k})_{k\ge0}, \dots, (\mu_{L,k})_{k \ge0}$ satisfying $\mu_{i,k} = o(\mu_{i+1,k})$ for $1 \le i \le L-1$ and a family $(\kappa_i)_{1 \le i \le L} \in \{\pm1\}^L$. The kernel of $- \triangle + h_0$ prevents in general to improve the term $\e_k$ in the right-hand side, even when $u_0 \equiv 0$. We refer to \cite{DruetRobertWei} for a detailed analysis in this case and to \cite{Premoselli13} for a general proof of pointwise estimates even in the presence of a kernel.

\section{Necessary conditions for blow-up and proof of Thorem \ref{thstabi-Nge7} and Theorem \ref{thstabi2etoile}} \label{pohosec}

In this section we apply Proposition \ref{theorieC0} to precisely describe the blow-up configurations that radial finite-energy solutions of \eqref{LNTk} can develop. We first provide a complete description of blow-up configurations for positive solutions of \eqref{LNTk}, which allows us to prove Theorems \ref{thstabi2etoile} to \ref{thblowup-intro-supercritical}. We then apply the same techniques to prove Theorem \ref{thstabi-Nge7}. The main tool for this is the classical Pohozaev identity that we now recall. Let $(u_k)_{k \ge0}$ be a sequence of smooth possibly sign-changing solutions of \eqref{LNTk} and let $(\delta_k)_{k \ge0}$ be a sequence of real numbers with $0 < \delta_k \le R$. Multiplying \eqref{LNTkr} by $r u_k'(r) + \frac{N-2}{2} u_k(r)$ and integrating over $B_{\delta_k}$ yields, after a few integration by parts (see e.g. \cite{HebeyZLAM}, proposition $6.2$), the following identity:

\begin{equation} \label{poho}
\begin{aligned}
 \int_{B_{\delta_k}} u_k^2 dx &+ \frac{(N-2)^2}{4N} (p_k-2^*) \int_{B_{\delta_k}} |u_k|^{p_k} dx \\
& = \omega_{N-1} \delta_k^{N-1} \Bigg( -\frac{\delta_k}{2}(u_k'(\delta_k))^2 - \frac{N-2}{2}u_k(\delta_k) u_k'(\delta_k) 
& + \frac{\delta_k}{2} (u_k(\delta_k))^2 - \frac{\delta_k}{p_k} |u_k(\delta_k)|^{p_k}\Bigg).
\end{aligned}
\end{equation}

This identity will be used repeatedly in the following to obtain necessary conditions on the blow-up behaviour of sequences of solutions of \eqref{LNTk}.

\subsection{Classification of positive blow-up configurations}\label{sec:classi}

We first recall the definition of the \emph{mass} of a Green's function. Let $h_0$ be a smooth radial function in $B_R$ such that $- \triangle + h_0$ has no kernel (see \eqref{nokernel}). Let $G_{h_0}$ be the Green's function of $- \triangle + h_0$ with Neumann boundary condition on $B_R$, which is then uniquely defined. The iterative construction of $G_{h_0}$ (see \cite{ReyWei} when $N=3$ but also, in the Dirichlet case and when $N \ge 4$, \cite{LiZhu, Aub, RobDirichlet, RobertNeumann}) show that $x \mapsto G_{h_0}(0,x)$ is radial and expands as 
\begin{equation} \label{greenfunc}
 G_{h_0}(0,r) = \frac{1}{(N-2) \omega_{N-1}} r^{2-N} + \left \{ 
\begin{aligned} &H_{h_0} + O(r^{1- \alpha}) & \textrm{ if } N=3 \\
&  \alpha_4 \ln \frac{ 1}{r} + O(1) & \textrm{ if } N=4 \\
&  \alpha_N r^{4-N} + O(r^{4+\alpha-N}) & \textrm{ if } N\ge 5 \\
 \end{aligned} \right. \end{equation}
for some $0<\alpha<1$, where $H_{h_0} \in \mathbb{R}$ and $\alpha_N \not = 0$ for $N \ge 4$. If $h_0$ vanishes at a certain order at $0$ these expansions can be improved (see \cite{RobertVetois5}). Assume for instance that $h_0(0) = h_0'(0) = 0$. Then 
\begin{equation} \label{greenfunc2}
 G_{h_0}(0,r) = \frac{1}{(N-2) \omega_{N-1}} r^{2-N} + \left \{ 
\begin{aligned} &H_{h_0} + O(r^{1- \alpha}) & \textrm{ if } N=3,4,5 \\
& - c_0 \ln \frac{1}{r} + O(1) & \textrm{ if } N=6 \\
&  \alpha_N r^{6-N} + O(r^{6+\alpha-N}) & \textrm{ if } N\ge 7, \\
 \end{aligned} \right. \end{equation}
where $c_0 = \frac{h_0''(0)}{32 \omega_5}$ and for some constant $\alpha_N \neq 0$. If we have in addition $h_0''(0) = 0$, the expansion in the six-dimensional case also becomes 
\begin{equation*}
 G_{h_0}(0,r) = \frac{1}{4 \omega_{5}} r^{-4} + H_{h_0} + O(r^{1- \alpha})  \quad \textrm{ as } r \to 0 .
 \end{equation*}
We say that $H_{h_0}$ in expansions \eqref{greenfunc} -- \eqref{greenfunc2} is the \emph{mass} of $G$ at $0$. Remark that our sign convention is different than the one in \cite{ReyWei}: using the notations of \cite{ReyWei}, our mass $H_1$ of $G_1$ defined in \eqref{greenfunc} reads as 
\begin{equation} \label{masseRW}
H_1 = - (1+ H_1(0,0))
\end{equation} where $H_1$ is defined in \cite[Equation (1.2)]{ReyWei}. 
 
 \medskip

The following Theorem completely describes the blow-up behaviour for positive radial solutions of \eqref{LNTk}: 

\begin{thm} \label{thblowup}
Let $(u_k)_{k \ge0}$ be a sequence of \emph{positive radial} solutions of \eqref{LNTkr} such that 
\begin{itemize}
\item[(i)] $\Vert u_k \Vert_{L^{\alpha_k}(B_R)} \le C$, for all $k \ge 0$, where $\alpha_k = \max \big( 2^*, \frac{N}{2}p_k - N \big)$,
\item[(ii)] $\lim_{k \to + \infty} \Vert u_k \Vert_{L^\infty(B_R)} = + \infty.$
\end{itemize}
Assume moreover that $u_k \in C^2(\overline{B_R})$ for all $k \ge 0$ if $p_k \ge 2^*$ and denote by $u_0$ its weak limit.

$1)$ Assume that $p_k \le 2^*$ for all $k\ge0$ and $\lim_{k\to +\infty} p_k = 2^*$. Then
\begin{itemize}
\item $u_0 \equiv 0$ for $3 \le N \le 5$ and $u_0(0) \le \frac12$ when $N=6$. 
\item If $N=3$ we have in addition $H_1 \le 0$ where $H_1$ is the mass of $G_1$ at $0$ as in \eqref{greenfunc}. 
\item When $3 \le N \le 6$ blow-up occurs with a single positive bubble at $0$.
\end{itemize}

\smallskip

$2)$ Assume that $p_k = 2^*$ for all $k\ge0$. Then
\begin{itemize}
\item  either  $N=3$ and then $u_0 \equiv0$, $H_1=0$, where $H_1$ is the mass of $G_1$ at $0$ as in \eqref{greenfunc}, and the blow-up occurs with a single positive bubble at $0$.
\item or $N=6$, $u_0(0) = \frac12$ and the blow-up occurs with a single positive bubble at $0$.
\end{itemize}

\smallskip

$3)$ Assume that $p_k \ge 2^*$ for all $k\ge0$ and $\lim_{k\to +\infty} p_k = 2^*$. Then $3 \le N\le 6$.

\begin{itemize}

\item If $4 \le N \le 6$ we have $u_0 >0$ and $u_0(0) \ge \frac12$ when $N=6$. 
\item If $N=3$ we either have $u_0 >0$ or $(u_0\equiv 0$ and $H_1 \ge0)$, where $H_1$ is the mass of $G_1$ at $0$ as in \eqref{greenfunc}. 
\end{itemize}
\end{thm}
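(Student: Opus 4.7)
The strategy is to combine the sharp pointwise description from Proposition~\ref{theorieC0} with the Pohozaev identity \eqref{poho}, applied at well-chosen scales. Since $u_k>0$ and $u_0$ is its weak limit, all bubble signs $\kappa_i$ from Proposition~\ref{theorieC0} equal $+1$ and $u_0\ge 0$. The analysis splits into two pieces: the Pohozaev balance on the whole ball $B_R$, which extracts the relation between $u_0$ and the bubbles, and the Pohozaev balance on intermediate annuli between successive bubble scales, which rules out or constrains bubble towers.

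First I would apply \eqref{poho} on $B_R$, using $u_k'(R)=0$, and subtract the Pohozaev identity for the limit $u_0$, which solves \eqref{LNT} with $p=2^*$. Using that $u_k \to u_0$ in $C^2([\delta,R])$ with quantitative correction $\tilde B_{i,k}(R)=O(\mu_{i,k}^{N-2-2/(p_k-2)})$ from Proposition~\ref{theorieC0}, I expect to reach an asymptotic identity of the schematic form
\begin{equation*}
2 u_0(0) \sum_{i=1}^L \int_{B_R} \tilde B_{i,k}\, dx + \int_{B_R} \Bigl(\sum_{i=1}^L \tilde B_{i,k}\Bigr)^2 dx + \frac{(N-2)^2}{4N}(p_k - 2^*)\Bigl(L\, K_N^{-N} + \int_{B_R} u_0^{2^*} dx\Bigr) = E_k,
\end{equation*}
where $E_k$ collects remainders controlled by Proposition~\ref{theorieC0} and \eqref{estder}. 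The key scalings are $\int_{B_R}\tilde B_{i,k}\,dx$ of order $\mu_{i,k}^{(N-2)/2}$, while $\int_{B_R}\tilde B_{i,k}^2\,dx$ is of order $R\mu_{i,k}$ if $N=3$, $\mu_{i,k}^2|\ln\mu_{i,k}|$ if $N=4$, and $\mu_{i,k}^2$ if $N\ge 5$.

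The conclusion would then follow from a dimension-dependent analysis of this identity. For $3\le N\le 5$ the term $u_0(0)\,\mu^{(N-2)/2}$ strictly dominates $\int \tilde B^2$; in the subcritical case ($p_k\le 2^*$), positivity combined with the negativity of $(p_k-2^*)$ forces $u_0(0)=0$ and hence $u_0\equiv 0$ by the strong maximum principle applied to the limit critical equation. In $N=6$, the leading terms $u_0(0)\mu^2\gamma_6$ and $\int \tilde B^2\sim \gamma_6'\mu^2$ are of identical order, and a direct comparison of coefficients yields the threshold $u_0(0)\le 1/2$ subcritically, $u_0(0)\ge 1/2$ supercritically, and equality $u_0(0)=1/2$ in the exactly critical case. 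To force $L=1$ in $3\le N\le 6$ subcritically, I would rerun the Pohozaev argument on intermediate balls $B_{\delta_k}$ with $\mu_{i,k}\ll \delta_k\ll\mu_{i+1,k}$: each additional bubble contributes a definite positive amount to the analogue of the identity above, and the iterated balance is incompatible with positivity. The supercritical case in $N\ge 7$ is already handled by Theorem~\ref{thstabi-Nge7}. For the exactly critical case $p_k=2^*$, the $(p_k-2^*)$ term disappears identically, so in $N=4,5$ and $N\ge 7$ the scaling mismatch between $\int \tilde B^2$ and the RHS $\omega_{N-1}R^N(u_k(R)^2/2-u_k(R)^{2^*}/2^*)$ produces a direct contradiction, leaving only $N=3$ and $N=6$ as admissible.

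The most delicate case is $N=3$ with $u_0\equiv 0$, where the previous leading terms cancel and the Green's function mass $H_1$ governs the surviving balance. Using the representation formula
\begin{equation*}
u_k(R) = \int_{B_R} G_1(R,y)\, |u_k|^{p_k-2}u_k(y)\,dy
\end{equation*}
together with the expansion \eqref{greenfunc} and the concentration of the bubble at the origin, one extracts a contribution proportional to $H_1$ at the appropriate order. Reinserting this refined boundary expansion into the Pohozaev identity and comparing it with the $(p_k-2^*)$ volume term then forces $H_1\le 0$ subcritically, $H_1=0$ in the exactly critical case, and $H_1\ge 0$ supercritically. This subleading computation, and the accompanying control of the error terms in Proposition~\ref{theorieC0} at the correct precision once all leading cancellations have been performed, is the main technical obstacle of the proof.
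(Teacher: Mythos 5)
Your strategy is close in spirit to the paper's (Pohozaev identity combined with the pointwise description from Proposition~\ref{theorieC0}), but there is a genuine gap in how you extract the key quantity $u_0(0)$, and as written your approach would not recover the sharp threshold $u_0(0)\le\frac12$ in dimension $N=6$.

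The crucial error is the claim that $\int_{B_R} u_0\,\tilde B_{i,k}\,dx$ is asymptotic to $u_0(0)\int_{B_R}\tilde B_{i,k}\,dx$. This is false: for $N\ge 3$ the $L^1$-mass of $B_{\mu}$ on $B_R$ does \emph{not} concentrate at the origin. Indeed $r^{N-1}B_\mu(r)\sim c\,\mu^{(N-2)/2}\,r$ for $\mu\ll r\le R$, so most of the $L^1$-mass of $B_\mu$ sits near $r=R$, and
\begin{equation*}
\int_{B_R} u_0\,\tilde B_{i,k}\,dx = \bigl(N(N-2)\bigr)^{\frac{N-2}{4}}\mu_{i,k}^{\frac{N-2}{2}}\int_{B_R}u_0(x)\,|x|^{2-N}\,dx\;+\;o\bigl(\mu_{i,k}^{\frac{N-2}{2}}\bigr),
\end{equation*}
which involves the weighted average $\int_{B_R} u_0\,|x|^{2-N}dx$, not $u_0(0)$. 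Consequently your schematic identity has the wrong coefficient in front of $\mu^{(N-2)/2}$, and the condition that would follow in $N=6$ is a bound on $\int u_0|x|^{-4}dx$, not on $u_0(0)$. A second, related issue is that applying the Pohozaev identity on the full ball $B_R$ and subtracting the one for $u_0$ leaves a residual boundary contribution $\omega_{N-1}R^N\bigl(\tfrac{u_0(R)^{p_k}}{p_k}-\tfrac{u_0(R)^{2^*}}{2^*}\bigr)$, which is itself of size $O(p_k-2^*)$ and enters at the same order as the volume term $\frac{(N-2)^2}{4N}(p_k-2^*)\int u_k^{p_k}$; it is not part of a negligible error $E_k$ and there is no reason for its sign to cooperate.

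The paper avoids both issues by applying \eqref{poho} on a \emph{shrinking} ball $B_{\delta_k}$ with $\delta_k=\sqrt{\mu_{L,k}}$. At this intermediate scale the boundary term is evaluated via the rescaled limit $\hat u_k(r)=\mu_{L,k}^{\frac{2}{p_k-2}-\frac{N-2}{2}}u_k(\sqrt{\mu_{L,k}}\,r)\to \frac{(N(N-2))^{(N-2)/4}}{r^{N-2}}+u_0(0)$, whose additive constant is exactly $u_0(0)$; the cross terms $-\frac{\delta_k}{2}u_k'^2$ and $-\frac{N-2}{2}u_ku_k'$ then combine to produce $\frac12(N-2)^2(N(N-2))^{\frac{N-2}{4}}\omega_{N-1}u_0(0)\,\mu_{L,k}^{\frac{N-2}{2}}(1+o(1))$, while the $p$-dependent boundary piece $\frac{\delta_k}{p_k}|u_k(\delta_k)|^{p_k}$ is $O(\mu_{L,k}^{N/2})=o(\mu_{L,k}^{(N-2)/2})$ and disappears. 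The volume $\int_{B_{\delta_k}}u_k^2$ only carries the bubble self-interaction $O(\mu_{L,k}^2)$ (for $N\ge5$), so the comparison cleanly pits $u_0(0)\mu^{(N-2)/2}$ against $\frac{(N-2)^2}{4N}(p_k-2^*)K_N^{-N}$ and $C_1(N)\mu^2$, yielding the stated dimension- and sign-dependent constraints, and in particular the exact coefficient $\frac12$ when $N=6$. Your remaining ideas (intermediate-radius Pohozaev at $\sqrt{\mu_{i,k}\mu_{i+1,k}}$ to rule out towers; Green's representation with the mass expansion \eqref{greenfunc} to handle $N=3$, $u_0\equiv0$) are essentially the paper's, but they should also be carried out at shrinking or fixed small radii $\delta$, not on the full ball, for the same reason.
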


The conditions in Theorem \ref{thblowup}, in particular the mass conditions when $N=3$, are well-known and have been highlighted for instance in \cite{ LiZhu,Druetdim3, DruetJDG}. Simplicity of the blow-up (that is the nonexistence of towers of bubbles) is here a consequence of the radiality of the solutions. As is easily seen, Theorem \ref{thblowup} immediately implies Theorems \ref{thblowup-intro-critandsubcrit} and \ref{thblowup-intro-supercritical}. The consequence of the sign of the mass on the admissible values of the radius $R$ in dimension $N=3$ is discussed in Remark \ref{Rem:mass-suite}. 

\medbreak

Before proving Theorem \ref{thblowup}, we show that Theorem \ref{thstabi2etoile} is a consequence of it :

\begin{proof}[Proof of Theorem \ref{thstabi2etoile} assuming Theorem \ref{thblowup}.]
We first assume that $N=6$. We let $(u_k)_{k \ge0}$ be a sequence of positive radial solutions to \eqref{LNTkr} with $p_k = 3$ satisfying the boundedness assumption (i) and assume by contradiction that $(u_k)_{k\ge0}$ blows up. We let $u_0$ be the weak limit of $(u_k)_{k \ge0}$. Theorem \ref{thblowup} shows that $u_k$ blows-up with a single bubble at the origin and that $u_0(0) = \frac12$. Since $u_0$ is radial this completely determines it: in particular by local uniqueness $u_0$ coincides with the restriction to $[0,R]$ of the unique radial positive solution of 
\begin{equation} \label{contrainteu0}
 - u'' - \frac{5}{r}u' + u-u^2 = 0 \text{ in } [0, + \infty) , \quad u'(0) = 0, u(0) = \frac12 
 \end{equation}
 constructed in Theorem $1.6$ of \cite{Ni83}. It is proven in \cite{Ni83} that this solution tends to $1$ as $r \to + \infty$ and oscillates an infinite number of times around the constant solution $1$. We denote by $(R_\ell)_{\ell\ge 1}$ the sequence of the local extrema  of $u_0$ in $(0, + \infty)$, which is a discrete set by local uniqueness. Since $u_0$ solves \eqref{LNTkr} with $p_k=3$ we necessarily have $R = R_\ell$ for some $k \ge 0$, otherwise the Neumann boundary condition is not satisfied. That $\lim_{\ell \to +\infty} R_\ell = +\infty$ and $\liminf_{\ell \to +\infty} (R_\ell - R_{\ell-1}) >0$ also follows from \cite{Ni83}. This concludes the proof of Theorem \ref{thstabi2etoile} when $N=6$.
 
We now consider the case $N=3$. Let $(u_k)_{k \ge0}$ be a sequence of positive radial solutions to \eqref{LNTkr} with $p_k = 6$ satisfying the boundedness assumption (i) and assume by contradiction that $(u_k)_{k\ge0}$ blows up. Theorem \ref{thblowup} then shows that $u_k$ blows up with a zero weak limit and one bubble at the origin and that the mass of $G$, the Green's function of $- \triangle +1$, defined as in \eqref{greenfunc}, vanishes at $0$. For the ball $B(0,R)$ the mass of $G$ can be explicitly computed. A simple scaling argument similar to that of Remark \ref{remscaling} shows that it vanishes if and only if the mass of the Green's function of $- \triangle + R^2$ in the ball $B(0,1) \subset \R^3$ vanishes. The mass of $- \triangle + R^2$ in $B(0,1)$ has been explicitly computed in \cite{MR3938021}, Lemma $2.3$, where it was shown that there exists a unique value of $R$, that we call $R^*$, for which it vanishes. Thus $R=R^*$ which proves Theorem \ref{thstabi2etoile}.
\end{proof}

\begin{rmq}[Critical case : $N=3$ and $R=R^*$]\label{Rem:N=3crit}
In the exactly critical case $p_k = 6$ for all $k \ge 0$ it seems very likely to us that blow-up for finite-energy positive radial solutions of \eqref{LNTkr} will not occur even when $R=R^*$. We believe a rigorous proof of this fact should follow from an adaptation of the techniques developed in \cite{KonigLaurain}. It would consist to go to the next order in the analysis. Since this would be quite lengthly, we leave that point open. 
Note also that our setting here is different from the one in \cite{MR3938021} where perturbations of $R$ were allowed. 
 \end{rmq}

\begin{rmq}[Critical case : mass condition in dimension $N=3$]\label{Rem:mass-suite}
In dimension $N=3$, as previously mentioned, the mass of $G_1$ at $0$ vanishes for one and only one radius $R^*$. In the assertion 1) of Theorem \ref{thblowup}, the conclusion $H_1\le 0$ implies $R\ge R^*$ whereas $H_1\ge 0$ implies $R\le R^*$ in 3). This easily follows from \eqref{masseRW} and the analysis in \cite{ReyWei} (see also \cite{MR3938021}). 
\end{rmq}

\begin{proof}[Proof of Theorem \ref{thblowup}]
Throughout this proof we let $(u_k)_{k\ge0}$ be a sequence of \emph{positive} functions satisfying \eqref{LNTkr}, and the assumptions (i) and (ii). We let $L \ge 1$ be as in Proposition \ref{theorieC0}. 

\medskip

\noindent\textbf{Claim 1: } if $p_k \le 2^*$ (up to a subsequence), then $u_0 \equiv0$ when $3 \le N \le 5$ and $u_0(0) \le \frac12$ when $N=6$ with equality if $p_k = 2^*$ for all $k\ge 0$.

We choose $\delta_k = \sqrt{\mu_{L,k}}$. Straightforward computations using Proposition \ref{theorieC0} and \eqref{estder} show that 
\[ \begin{aligned}
 \omega_{N-1} \delta_k^{N-1} \Bigg( -\frac{\delta_k}{2}(u_k'(\delta_k))^2 - \frac{N-2}{2}u_k(\delta_k) u_k'(\delta_k) + \frac{\delta_k}{2} (u_k(\delta_k))^2 - \frac{\delta_k}{p_k} |u_k(\delta_k)|^{p_k}\Bigg) \\
 = O \Big(\mu_{L,k}^{\frac{N-2}{2} + N-2 - \frac{4}{p_k-2}} \Big),
 \end{aligned} \]
 and that 
 \[ \begin{aligned}
& \int_{B_{\delta_k}} u_k^2 dx  = \left \{ \begin{aligned}& O \big( \mu_{L,k}^{2 - \frac{4}{p_k-2}} \big) & \textrm{ if } N=3 \\ &O \big( \mu_{L,k}^{\frac{7}{2} - \frac{4}{p_k-2}} \big) & \textrm{ if } N=4 \\ &\big(C_1(N) + o(1) \big) \mu_{L,k}^{N - \frac{4}{p_k-2}} & \textrm{ if } N \ge 5, \\ \end{aligned} \right.  
 \end{aligned}\]
 where $C_1(N) = \int_{\R^N} B_0^2dx$ when $N \ge 5$. Independently, Proposition \ref{theorieC0} with Fatou's lemma shows that 
  \[ \int_{B_{\delta_k}} u_k^{p_k}dx \ge  \mu_{L,k}^{N - 2 - \frac{4}{p_k-2}} \big( K_N^{-N} + o(1) \big) \]
as $k \to + \infty$. Since $p_k \le 2^*$ for all $k \ge0$ plugging the latter computations in \eqref{poho} yields
\[ 2^*-p_k =  O \big( \mu_{L,k}^{\frac{N-2}{2}} \big) +  \left \{ \begin{aligned}& O \big( \mu_{L,k} \big) & \textrm{ if } N=3 \\ &O \big( \mu_{L,k}^{\frac{3}{2}} \big) & \textrm{ if } N=4 \\ &O\big(\mu_{L,k}^{2}\big) & \textrm{ if } N \ge 5 \\ \end{aligned} \right. . \]
As a consequence, and since $N-2- \frac{4}{p_k-2} \to 0$ as $k \to + \infty$, we have that
\begin{equation} \label{th411}
 \mu_{L,k}^{N - 2 - \frac{4}{p_k-2}} = 1 +o(1) 
 \end{equation}
as $k \to + \infty$. Let, for $0 \le r \le \frac{R}{\mu_{L,k}}$, $\hat{u}_k(r) =\mu_{L,k}^{\frac{2}{p_k-2} - \frac{N-2}{2}} u_k(\sqrt{\mu_{L,k}} r)$. With \eqref{LNTk}, Proposition \ref{theorieC0}, \eqref{th411} and standard elliptic theory it is easily seen that 
\[ \hat{u}_k(r) \to \hat{u}_\infty(r) = \frac{(N(N-2))^{\frac{N-2}{4}}}{r^{N-2}} + u_0(0) \]
in $C^2_{loc}(]0,1])$ as $k \to + \infty$. Using the latter convergence we can compute again the boundary integral in \eqref{poho}. Scaling by a factor $\sqrt{\mu_{L,k}}$ and using the exact expression of $\hat{u}_\infty$ and \eqref{th411} we have 
\[ \begin{aligned}
 \omega_{N-1} \delta_k^{N-1} \Bigg( -\frac{\delta_k}{2}(u_k'(\delta_k))^2 - \frac{N-2}{2}u_k(\delta_k) u_k'(\delta_k) + \frac{\delta_k}{2} (u_k(\delta_k))^2 - \frac{\delta_k}{p_k} |u_k(\delta_k)|^{p_k}\Bigg) \\
 = \Big( \frac12 (N-2)^{2} (N(N-2))^{\frac{N-2}{4}}\omega_{N-1} u_0(0) + o(1) \Big) \mu_{L,k}^{\frac{N-2}{2}}.
 \end{aligned} \]
It remains to plug this new expression in \eqref{poho}. Using \eqref{th411} and combining it with the previous integral estimates, and since $p_k \le 2^*$, we obtain that 
 \begin{equation*}
  u_0(0) + o(1) = O\big(\mu_{L,k}^{\frac12} \big) \quad \text{ if } 3 \le N \le 5. 
  \end{equation*}
   Hence $u_0(0) =0$ (and thus $u_0 \equiv 0$) if $3 \le N \le 5$. If $N =6$ it is known that $\omega_6 = \frac{16}{15} \pi^3$, so that straightforward computations show that $ C_1(6) = 106 \pi^3$. Since $\omega_5 = \pi^3$, the identity \eqref{poho} now gives, with the two previous estimates and after passing to the limit:
 \begin{equation} \label{valeuru0}
  \begin{aligned}
 & u_0(0) \le \frac12 \quad & \textrm{ if } p_k \le 2^* \text{ for all  } k \ge0 \\
 & u_0(0) = \frac12 \quad & \textrm{ if } p_k =2^* \text{ for all  } k \ge0. \\
 \end{aligned} 
 \end{equation}
 
 \medbreak
 
 \noindent\textbf{Claim 2:}  if $p_k \le 2^*$ (up to a subsequence), then, in dimensions $3 \le N \le 6$,  a blow-up can only occur with a single positive bubble at the origin (regardless of the nature of $u_0$). 
 
 Assume by contradiction that $L \ge 2$ and let 
 \begin{equation} \label{blowup0}
 \delta_k = \sqrt{\mu_{1,k}\mu_{2,k}}.
 \end{equation} Using Proposition \ref{theorieC0} and \eqref{estder} we have that
 \[ \begin{aligned}
 \omega_{N-1} \delta_k^{N-1} \Bigg( -\frac{\delta_k}{2}(u_k'(\delta_k))^2 - \frac{N-2}{2}u_k(\delta_k) u_k'(\delta_k) + \frac{\delta_k}{2} (u_k(\delta_k))^2 - \frac{\delta_k}{p_k} |u_k(\delta_k)|^{p_k}\Bigg) \\
 = O \Big( \mu_{1,k}^{N-2-\frac{4}{p_k-2}} \left( \frac{\mu_{1,k}}{\mu_{2,k}}\right)^{\frac{N-2}{2}} \Big), 
 \end{aligned} \]
 that 
 \begin{equation} \label{blowup1}
  \begin{aligned}
& \int_{B_{\delta_k}} u_k^2 dx  = \left \{ \begin{aligned}& O \big( \mu_{1,k}^{2 - \frac{4}{p_k-2}} \big) & \textrm{ if } N=3 \\ &O \big( \mu_{1,k}^{4- \frac{4}{p_k-2}} \ln \frac{1}{\mu_{1,k}}\big) & \textrm{ if } N=4 \\ &\big(C_1(N) + o(1) \big) \mu_{1,k}^{N - \frac{4}{p_k-2}} & \textrm{ if } N \ge 5, \\ \end{aligned} \right.  
 \end{aligned}
 \end{equation}
 and that 
 \begin{equation} \label{integralecritique}
   \int_{B_{\delta_k}} u_k^{p_k}dx = \mu_{1,k}^{N - 2 - \frac{4}{p_k-2}} \big( K_N^{-N} + o(1) \big) 
   \end{equation}
as $k \to + \infty$. Estimate \eqref{blowup1} implies in particular that 
\[ \int_{B_{\delta_k}} u_k^2 dx =  O \Big( \mu_{1,k}^{N-2-\frac{4}{p_k-2}} \left( \frac{\mu_{1,k}}{\mu_{2,k}}\right)^{\frac{N-2}{2}} \Big) \]
for any $3 \le N \le 6$. Plugging the latter estimates in \eqref{poho} we thus obtain that 
\begin{equation} \label{blowup2}
 p_k -2^* = O \Bigg(  \left( \frac{\mu_{1,k}}{\mu_{2,k}}\right)^{\frac{N-2}{2}} \Bigg)
 \end{equation}
(remark that \eqref{blowup2} does not require the assumption $p_k \le 2^*$). For $0 \le r \le \frac{R}{\delta_k}$ we now let 
 \[ \hat{u}_k(r) = \mu_{1,k}^{\frac{2}{p_k-2} - (N-2)} \delta_k^{N-2} u_k\big( \delta_k r \big) \]
 where $\delta_k$ is still given by \eqref{blowup0}.  With \eqref{blowup2} we now have 
 $$ \left( \frac{\mu_{1,k}}{\mu_{2,k}}\right)^{\frac{N-2}{2} - \frac{2}{p_k-2}} = 1 + o(1) $$
as $k \to + \infty$, so that by Proposition \ref{theorieC0} we have
 \[ \hat{u}_k(r) \to \frac{(N(N-2))^{\frac{N-2}{4}}}{r^{N-2}} + \big( N(N-2) \big)^{\frac{N-2}{4}} \quad \text{ in } C^2_{loc}(]0,1])\]
 as $k \to + \infty$. The boundary term can thus be computed again as 
 \begin{equation} \label{blowup3}
  \begin{aligned}
& \omega_{N-1} \delta_k^{N-1} \Bigg( -\frac{\delta_k}{2}(u_k'(\delta_k))^2 - \frac{N-2}{2}u_k(\delta_k) u_k'(\delta_k) + \frac{\delta_k}{2} (u_k(\delta_k))^2 - \frac{\delta_k}{p_k} |u_k(\delta_k)|^{p_k}\Bigg) \\
& = \Big( \frac12 (N-2)^2 \big( N(N-2) \big)^{\frac{N-2}{4}} \omega_{N-1} + o(1) \Big)\left( \frac{\mu_{1,k}}{\mu_{2,k}}\right)^{\frac{N-2}{2}} \mu_{1,k}^{N-2 - \frac{4}{p_k-2}}.
 \end{aligned} 
 \end{equation}
  The latter estimate in \eqref{poho}, together with \eqref{blowup1}, now shows that 
 \[ \left( \frac{\mu_{1,k}}{\mu_{2,k}}\right)^{\frac{N-2}{2}}  =  \left \{ \begin{aligned}& O \big( \mu_{1,k} \big) & \textrm{ if } N=3 \\ &O \big( \mu_{1,k}^{\frac{3}{2}} \big) & \textrm{ if } N=4 \\ &\big(C_1(N) + o(1) \big) \mu_{1,k}^2 & \textrm{ if } N \ge 5. \\ \end{aligned} \right.  \]
Since $\mu_{2,k}  \to 0$ as $k \to + \infty$ this is a contradiction when $N \le 6$ and the claim is proved.

\medbreak

\noindent\textbf{Claim 3:} if $p_k = 2^*$ for all $k \ge 0$, then a blow-up is not possible when  $N\not \in \{3, 6\}$.

\emph{Assume first that $N=4,5$.} We have already proven in Claim 1 that $u_0 \equiv 0$ and in Claim 2 that $L=1$. We choose $\delta_k = \delta$ for some $0 < \delta < R$ fixed. For this choice of $\delta_k$ it is easily seen, using again Proposition \ref{theorieC0} and \eqref{estder}, that we have 
 \[ \begin{aligned}
 \omega_{N-1} \delta^{N-1} \Bigg( -\frac{\delta}{2}(u_k'(\delta))^2 - \frac{N-2}{2}u_k(\delta) u_k'(\delta) + \frac{\delta}{2} (u_k(\delta))^2 - \frac{\delta}{p_k} |u_k(\delta)|^{p_k}\Bigg) 
= O \big( \mu_{1,k}^{N-2 } \big)
 \end{aligned} \]
 and
 \[ \begin{aligned}
& \int_{B_{\delta_k}} u_k^2 dx  = \left \{ \begin{aligned} 
& 64 \omega_3 \mu_{1,k}^{2}\ln \frac{1}{\mu_{1,k}} + o \big(  \mu_{1,k}^{2} \ln \frac{1}{\mu_{1,k}}\big) & \textrm{ if } N=4 \\ &\big(C_1(5) + o(1) \big) \mu_{1,k}^{2 } & \textrm{ if } N = 5. \\ \end{aligned} \right.  
 \end{aligned}\]
Plugging the latter estimates in \eqref{poho} shows that 
\[ \begin{aligned}
& 64 \omega_3 \mu_{1,k}^{2}\ln \frac{1}{\mu_{1,k}} + o \big(  \mu_{1,k}^{2 }\ln \frac{1}{\mu_{1,k}}\big) = O \big(  \mu_{1,k}^{2} \big) \quad \textrm{ if } N=4 \\
&\big(C_1(5) + o(1) \big) \mu_{1,k}^2 = O(\mu_{1,k}^3) \quad \textrm{ if } N=5, 
 \end{aligned} \]
a contradiction in both cases. 

\medbreak

\emph{Consider now the dimensions $N \ge 7$.} In this setting, it is possible that $u_0 \not \equiv 0$ and $L >1$. We choose $\delta_k = \sqrt{\mu_{L,k}}$. With Proposition \ref{theorieC0} and \eqref{estder} we now have 
 \[ \begin{aligned}
 \omega_{N-1} \delta_k^{N-1} \Bigg( -\frac{\delta_k}{2}(u_k'(\delta_k))^2 - \frac{N-2}{2}u_k(\delta_k) u_k'(\delta_k) + \frac{\delta_k}{2} (u_k(\delta_k))^2 - \frac{\delta_k}{p_k} |u_k(\delta_k)|^{p_k}\Bigg) 
= O \big( \mu_{L,k}^{\frac{N-2}{2} } \big).
\end{aligned} \]
Straightforward computations using Proposition \ref{theorieC0} again show that 
 \[  \int_{B_{\delta_k}} u_k^2 dx  = \big(C_1(N) + o(1) \big) \mu_{L,k}^{2 }  \textrm{ if } N \ge 7. \]
 Plugging the latter two identities in \eqref{poho} then shows that 
 \[  \big(C_1(N) + o(1) \big) \mu_{L,k}^{2 }  = O( \mu_{L,k}^{\frac{N-2}{2}}), \]
 a contradiction since $N \ge 7$.

\medskip

\noindent\textbf{Claim 4:} if $N=3$ and $u_0=0$, then $H_1 = 0$ if $p_k = 2^*$ for all $k\ge 0$ whereas $H_1 \stackrel{\le}{\ge} 0$  if $p_k  \stackrel{\le}{\ge}  2^*$ for all $k \ge 0$ .

If $p_k \le 2^*$ we have in fact proven that $u_0 \equiv 0$ and $L=1$. If $p_k \ge 2^*$ for all $k \ge 0$, one can have $u_0>0$ and it is an assumption here that $u_0 \equiv 0$.
Since $u_0 \equiv 0$, by \eqref{LNTk} and Proposition \ref{theorieC0} we have 
\[ \mu_{L,k}^{\frac{2}{p_k-2}-1} u_k(r) \to G_1(r) \quad \text{ in } C^1_{loc}(]0,R]), \]
where $G_1$ is as in \eqref{greenfunc} and where we already know that $L=1$ if $p_k \le 2^*$ for all $k\ge0$. Choose again $\delta_k = \delta$ for some $0 < \delta < R$ fixed. Straightforward computations with \eqref{greenfunc} show that
 \[ \begin{aligned}
 \omega_{N-1} \delta^{N-1} \Bigg( -\frac{\delta}{2}(u_k'(\delta))^2 - \frac{N-2}{2}u_k(\delta) u_k'(\delta) + \frac{\delta}{2} (u_k(\delta))^2 - \frac{\delta}{p_k} |u_k(\delta)|^{p_k}\Bigg) \\
=\Big( \frac{1}{2} 3^{\frac14}\omega_2 H_1 + O(\delta) + o(1) \Big)  \mu_{L,k}^{1 - \frac{4}{p_k-2}},
 \end{aligned} \]
 where $H_1$ is as in \eqref{greenfunc}.  Independently, we have by Proposition \ref{theorieC0} that
 \[ \int_{B_\delta} u_k^2 dx = O \big( \delta \mu_{L,k}^{2 - \frac{4}{p_k-2}}  \big) + o\big( \mu_{L,k}^{2 - \frac{4}{p_k-2}}\big) \]
and that, up to passing to a subsequence, that
\begin{equation} \label{integralecritique2}
 \int_{B_\delta} u_k^{p_k} dx = \big( C_2 + o(1) \big) \mu_{L,k}^{1- \frac{4}{p_k-2}} 
 \end{equation}
for some $C_2 >0$. The latter equality is straightforward when $p_k \le 2^*$ since $L=1$. If $p_k > 2^*$ it is obtained, up to passing to a subsequence, as a double inequality. On the one side a bound from below follows from Fatou's theorem, and on the other side a bound from above is obtained from Proposition \ref{theorieC0} and the estimate 
\[ \left( \frac{\mu_{i,k}}{\mu_{L,k}} \right)^{N - \frac{2p_k}{p_k-2}} = O(1)\]
for all $1 \le i \le L-1$, that follows since  $N - \frac{2p_k}{p_k-2} \ge0$ when $p_k \ge 2^*$. Note that estimate \eqref{integralecritique2} is different than \eqref{integralecritique}: in \eqref{integralecritique2} we are integrating in $B_\delta$ -- and not on $B_{\sqrt{\mu_{1,k}\mu_{2,k}}}$ as before -- hence possible contributions from all the bubbles have to be taken into account. Plugging the latter estimates in \eqref{poho} finally gives 
\[ \big(H_1 + O(\delta) + o(1) \big)  \mu_{L,k} = \big(C + o(1) \big) (p_k-2^*) \]
for some positive constant $C$. If $p_k = 2^*$ for all $k$ this shows, after passing to the limit $\delta \to 0$, that $H_1 = 0$. If $p_k \le 2^*$ for all $k \ge 0$  we similarly get that $H_1 \le 0$ and, if $p_k \ge 2^*$ for all $k \ge0$, we obtain that $H_1 \ge0$. 

\medbreak

With Claim 1 to Claim 4, we are done with the assertions concerning $p_k \le 2^*$ and $p_k = 2^*$ for all $k \ge 0$. We have also completed the proof of the case $p_k \ge 2^*$ when $N =3$.

\medskip

\noindent\textbf{Claim 5:} if $p_k \ge 2^*$ for all $k \ge 0$ and  $4 \le N \le 6$, then $u_0 >0$ and $u_0(0) \ge \frac12$ when $N=6$. 

We proceed by contradiction and assume that $u_0 \equiv 0$. Choose again $\delta_k = \delta$ for some $0 < \delta < R$. Proposition \ref{theorieC0} and \eqref{estder} show that 
 \[ \begin{aligned}
 \omega_{N-1} \delta^{N-1} \Bigg( -\frac{\delta}{2}(u_k'(\delta))^2 - \frac{N-2}{2}u_k(\delta) u_k'(\delta) + \frac{\delta}{2} (u_k(\delta))^2 - \frac{\delta}{p_k} |u_k(\delta)|^{p_k}\Bigg)
= O \big( \mu_{L,k}^{2(N-2) - \frac{4}{p_k-2}} \big)
 \end{aligned} \]
and that 
 \[ \begin{aligned}
& \int_{B_{\delta_k}} u_k^2 dx  \ge \left \{ \begin{aligned} 
& 64 \omega_3 \mu_{L,k}^{4 - \frac{4}{p_k-2}}\ln \frac{1}{\mu_{L,k}} + o \big(  \mu_{L,k}^{4 - \frac{4}{p_k-2}}\ln \frac{1}{\mu_{L,k}}\big) & \textrm{ if } N=4 \\ &\big(C_1(N) + o(1) \big) \mu_{L,k}^{N - \frac{4}{p_k-2}} & \textrm{ if } N = 5,6. \\ \end{aligned} \right.  
 \end{aligned}\]
Since $p_k \ge 2^*$ for all $k \ge 0$, we infer from the Pohozaev identity \eqref{poho} that  
\[ \begin{aligned}
& \ln \frac{1}{\mu_{1,k}} + o \big(\ln \frac{1}{\mu_{1,k}}\big) = O \big( 1 \big) \quad \textrm{ if } N=4 \\
&\big(C_1(N) + o(1) \big) \mu_{L,k}^2 = O(\mu_{1,k}^{N-2}) \quad \textrm{ if } N=5,6, 
\end{aligned} \]
and this yields a contradiction. Whence $u_0 >0$. Finally, the same arguments that led to \eqref{valeuru0} similarly show that $u_0(0) \ge \frac12$ when $p_k \ge 2^*$. 

\medbreak

This concludes the proof of Theorem \ref{thblowup}.
\end{proof}

In the next section, we construct  examples of blowing up sequences of positive solutions in all the cases mentioned in Theorem \ref{thblowup} when $N \ge 4$. Unless specified otherwise, towers of positive bubbles at the origin (corresponding to $L\ge2$ in Proposition \ref{theorieC0}) are likely to occur. In this case the identity \eqref{poho} also allows us to obtain necessary conditions on the blow-up. Assume that $(u_k)_{k \ge0}$ is a sequence of radial positive solutions of \eqref{LNTk} satisfying \eqref{energy} and \eqref{blowup}. We claim for instance that the following holds:

\begin{lem} \label{bubbletowers}
Assume that $p_k \ge 2^*$ for all $k \ge 0$, $3 \le N \le 6$ and that $(u_k)_{k \ge0}$ blows up with at least two bubbles, that is $L \ge2$. Then, for any $1 \le i \le L-1$, there exists a constant $C_i >0$ such that
\[ \left( \frac{\mu_{i,k}}{\mu_{i+1,k}} \right)^{\frac{N-2}{2}}= \big(C_i + o(1) \big)(p_k-2^*) .\]
\end{lem}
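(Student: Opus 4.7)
Following the approach of Claim~2 in the proof of Theorem~\ref{thblowup}, the plan is to apply the Pohozaev identity \eqref{poho} at the intermediate radius
\[
\delta_k := \sqrt{\mu_{i,k}\,\mu_{i+1,k}},
\]
which separates the inner $i$ bubbles (concentrating at scales $\mu_{j,k}\le \mu_{i,k}\ll \delta_k$) from the outer $L-i$ ones ($\mu_{j,k}\ge\mu_{i+1,k}\gg \delta_k$). Setting
\[
\hat u_k(r) := \mu_{i,k}^{\frac{2}{p_k-2}-(N-2)}\,\delta_k^{N-2}\,u_k(\delta_k r),
\]
and using Proposition~\ref{theorieC0} and \eqref{estder} -- with $\kappa_j\equiv 1$ for all $j$ since $u_k>0$ -- one shows that $\hat u_k\to [N(N-2)]^{(N-2)/4}\bigl(r^{2-N}+1\bigr)$ in $C^2_{\mathrm{loc}}\bigl((0,1]\bigr)$. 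The same computation that yielded \eqref{blowup3} then gives that the boundary side of \eqref{poho} equals
\[
\bigl(C_B+o(1)\bigr)\left(\frac{\mu_{i,k}}{\mu_{i+1,k}}\right)^{\!\frac{N-2}{2}}\mu_{i,k}^{N-2-\frac{4}{p_k-2}}, \qquad C_B := \tfrac12(N-2)^2[N(N-2)]^{(N-2)/4}\omega_{N-1}>0.
\]

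\paragraph{Interior integrals and balance.}
Because $\delta_k$ is intermediate, Proposition~\ref{theorieC0} implies that only the inner bubbles contribute at leading order to the interior integrals, yielding
\[
\int_{B_{\delta_k}}|u_k|^{p_k}\,dx = \bigl(K_N^{-N}+o(1)\bigr)\sum_{j=1}^i\mu_{j,k}^{N-2-\frac{4}{p_k-2}},
\]
while $\int_{B_{\delta_k}}u_k^2\,dx = O\bigl(\mu_{i,k}^{N-\frac{4}{p_k-2}}\bigr)$ for $N\ge 5$ (with a logarithmic factor for $N=4$ and the bound $O(\mu_{i,k}^{2-\frac{4}{p_k-2}})$ for $N=3$). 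Substituting into \eqref{poho}, dividing through by $\mu_{i,k}^{N-2-\frac{4}{p_k-2}}$, and extracting a subsequence along which
\[
S_k := \sum_{j=1}^i \Bigl(\frac{\mu_{j,k}}{\mu_{i,k}}\Bigr)^{N-2-\frac{4}{p_k-2}}
\]
converges to some $S_i\in[1,i]$ (possible since the exponent tends to $0$), one obtains
\[
R_k + \frac{(N-2)^2}{4N}K_N^{-N}\,S_i\,(p_k-2^*)\bigl(1+o(1)\bigr) = \bigl(C_B+o(1)\bigr)\left(\frac{\mu_{i,k}}{\mu_{i+1,k}}\right)^{\!\frac{N-2}{2}},
\]
where $R_k$ stands for the rescaled $L^2$ contribution, satisfying $R_k = O(\mu_{i,k}^2)$ (up to a logarithm in $N=4$).

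\paragraph{The dimensional obstacle and conclusion.}
The delicate step is to show $R_k = o\bigl((\mu_{i,k}/\mu_{i+1,k})^{(N-2)/2}\bigr)$, i.e.\ that $\mu_{i,k}^{2-(N-2)/2}\mu_{i+1,k}^{(N-2)/2}\to 0$. This holds automatically in every dimension $3\le N\le 6$ because $\mu_{i+1,k}\to 0$: for $N=6$ the condition simply reads $\mu_{i+1,k}^2\to 0$; for $N=3,4,5$ the factor $\mu_{i,k}^{2-(N-2)/2}$ diverges, but one checks directly, e.g.\ $\mu_{i,k}\mu_{i+1,k}\to 0$ for $N=3,4$ and $\mu_{i,k}^{1/2}\mu_{i+1,k}^{3/2}\to 0$ for $N=5$. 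Once this is in hand, $R_k$ is absorbed into the $o(1)$ correction on the right-hand side, and the Pohozaev balance collapses to
\[
\left(\frac{\mu_{i,k}}{\mu_{i+1,k}}\right)^{\!\frac{N-2}{2}} = (C_i + o(1))(p_k-2^*), \qquad C_i := \frac{(N-2)^2 K_N^{-N}\,S_i}{4N\,C_B} > 0,
\]
proving the lemma. The main obstacle is precisely this smallness check, and it is exactly where the hypothesis $N\le 6$ is used: for $N\ge 7$, the exponent $2-(N-2)/2$ becomes negative and $\mu_{i,k}^2$ could compete with the boundary term, but this is anyway consistent with Theorem~\ref{thstabi-Nge7}, which rules out blow-up altogether in those dimensions.
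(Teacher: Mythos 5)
Your proof is essentially the paper's argument, reorganized. You apply the Pohozaev identity \eqref{poho} at the same intermediate radius $\delta_k = \sqrt{\mu_{i,k}\mu_{i+1,k}}$, rescale as the paper does, and balance the three contributions. The genuine difference is the logical scaffolding: the paper first proves \emph{by contradiction} that $p_k-2^* \gtrsim (\mu_{i,k}/\mu_{i+1,k})^{(N-2)/2}$ (assuming the opposite forces the $L^2$ term to match the boundary, which collapses because $\mu_{i+1,k}\to 0$), then invokes \eqref{blowup2} for the converse, and passes to a subsequence. You instead notice directly that the rescaled $L^2$ contribution $R_k=O(\mu_{i,k}^2)$ is $o\bigl((\mu_{i,k}/\mu_{i+1,k})^{(N-2)/2}\bigr)$, which makes the Pohozaev identity at once a two-sided balance. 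This is a cleaner way to see the same inequality; you also track the bubble contributions $\sum_{j\le i}\mu_{j,k}^{N-2-\tfrac{4}{p_k-2}}$ more explicitly via the sequence $S_k$, which the paper handles implicitly by saying the estimates carry over from $\mu_{1,k}$ to $\mu_{i,k}$.

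Two small caveats. First, the precise boundary expansion $(C_B+o(1))(\mu_{i,k}/\mu_{i+1,k})^{(N-2)/2}\mu_{i,k}^{N-2-\tfrac{4}{p_k-2}}$ is \emph{not} an immediate consequence of Proposition~\ref{theorieC0}: the convergence $\hat u_k \to [N(N-2)]^{(N-2)/4}(r^{2-N}+1)$ requires $(\mu_{i,k}/\mu_{i+1,k})^{\tfrac{N-2}{2}-\tfrac{2}{p_k-2}}=1+o(1)$, i.e.\ $(p_k-2^*)\ln\bigl(\mu_{i+1,k}/\mu_{i,k}\bigr)\to 0$. Without this the constant term in $\hat u_k$ can diverge. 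The paper derives this preliminary fact via the cruder $O$-bound \eqref{blowup2}; you invoke the sharp expansion without establishing this step, so you should explicitly record \eqref{blowup2} (or the equivalent $p_k-2^*=O((\mu_{i,k}/\mu_{i+1,k})^{(N-2)/2})$) before using \eqref{blowup3}-type asymptotics. Second, a couple of your smallness computations have slips: for $N\in\{3,4,5\}$ the factor $\mu_{i,k}^{2-\tfrac{N-2}{2}}$ has a positive exponent and hence \emph{tends to $0$}, it does not diverge; and for $N=3$ the quantity to be checked (after normalising the $L^2$ bound $O(\mu_{i,k}^{2-\tfrac{4}{p_k-2}})$) is $\mu_{i,k}^{1/2}\mu_{i+1,k}^{1/2}\to 0$, not $\mu_{i,k}\mu_{i+1,k}$. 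The final conclusion in each dimension is unaffected.
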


\begin{proof}
We first prove that there exists a constant $C_1 >0$ such that 
\begin{equation} \label{tower}
p_k-2^* = \big(C_1+o(1) \big) \left( \frac{\mu_{1,k}}{\mu_{2,k}} \right)^{\frac{N-2}{2}}. 
\end{equation}
Assume by contradiction that $p_k-2^* = o \big( (\mu_{1,k}\mu_{2,k}^{-1})^{\frac{N-2}{2}} \big)$. This implies in particular that 
$$ \left( \frac{\mu_{1,k}}{\mu_{2,k}} \right)^{\frac{N-2}{2}- \frac{2}{p_k-2}} = 1+ o(1) $$
as $k \to + \infty$. With the latter estimates \eqref{blowup1}, \eqref{blowup2} and \eqref{blowup3} obtained in the proof of Theorem \ref{thblowup} are still true (since they did not use the assumption $p_k \le 2^*$). Then plugging \eqref{blowup3} and \eqref{blowup1} in \eqref{poho} gives
\[ \big(1+o(1) \big) \left( \frac{\mu_{1,k}}{\mu_{2,k}} \right)^{\frac{N-2}{2}} = \left \{ \begin{aligned}& O \big( \mu_{1,k} \big) & \textrm{ if } N=3 \\ &O \big( \mu_{1,k}^{2}\ln \frac{1}{\mu_{1,k}} \big) & \textrm{ if } N=4 \\ &\big(C_1(N) + o(1) \big) \mu_{1,k}^{2} & \textrm{ if } N = 5,6.  \\ \end{aligned} \right.  \]
Since $\mu_{2,k} \to 0$ as $k \to + \infty$ this is a contradiction. This proves that $p_k-2^* \gtrsim \big( (\mu_{1,k}\mu_{2,k}^{-1})^{\frac{N-2}{2}} \big)$. The other inequality is given by \eqref{blowup2}, and this proves \eqref{tower} up to passing to a subsequence. If now $i \in \{1, \dots, L-1\}$, we apply a Pohozaev identity on $B_{r_{i,k}}$, where $r_{i,k} = \sqrt{\mu_{i,k}\mu_{i+1,k}}$. Estimates \eqref{blowup1}, \eqref{blowup2} and \eqref{blowup3} remain true with $\mu_{1,k}$ and $\mu_{2,k}$ replaced by $\mu_{i,k}$ and $\mu_{i+1,k}$ respectively. The proof then follows the same lines as the proof of \eqref{tower}.
\end{proof}

\subsection{Proof of Theorem \ref{thstabi-Nge7}} 

\begin{proof}[Proof of Theorem \ref{thstabi-Nge7}]
We assume that $N \ge 7$, $p_k \ge 2^*$ for all $k \ge0$ and $\lim_{k \to + \infty} p_k = 2^*$, and we let $(u_k)_{k \ge0}$ be a sequence of solutions of \eqref{LNTkr} satisfying \eqref{energy}. As mentioned before, by the result of \cite{Ni83}, radial solutions of \eqref{LNTkr} when $p_k \ge 2^*$ have fixed sign. We will however not use this observation in this proof but instead rely on Proposition \ref{theorieC0} and   \eqref{poho}, that are independent of the sign of the solutions. We proceed again by contradiction and assume that \eqref{blowup} holds. We will again conclude by using the Pohozaev identity \eqref{poho}. Let $L \ge 1$ be the integer given by Proposition \ref{theorieC0} and let $\delta_k = \sqrt{\mu_{L,k}}$. Straightforward computations using Proposition \ref{theorieC0} and \eqref{estder} show that 
\[ \begin{aligned}
& \int_{B_{\delta_k}} u_k^2 dx  \ge \big(  C_1(N) + o(1) \big) \mu_{L,k}^{N - \frac{4}{p_k-2}} 
 \end{aligned}\]
for any $N \ge 7$, for some positive constant $C_1(N)$ only depending on $N$ and 
\[ \begin{aligned}
 \omega_{N-1} \delta_k^{N-1} \Bigg( -\frac{\delta_k}{2}(u_k'(\delta_k))^2 - \frac{N-2}{2}u_k(\delta_k) u_k'(\delta_k) + \frac{\delta_k}{2} (u_k(\delta_k))^2 - \frac{\delta_k}{p_k} |u_k(\delta_k)|^{p_k}\Bigg) \\
 = O\big( \mu_{L,k}^{\frac{N-2}{2} + N-2 - \frac{4}{p_k-2}} \big).
 \end{aligned} \]
Since $p_k \ge 2^*$, plugging these estimates in \eqref{poho} gives,
\[ C_1(N) \mu_{L,k}^2 \le O \big( \mu_{L,k}^{\frac{N-2}{2}} \big). \]
Since $N \ge 7$, this is a contradiction so that \eqref{blowup} cannot occur. The sequence $(u_k)_{k \ge0}$ is thus uniformly bounded in $L^\infty(B_R)$ and converges, up to a subsequence, in $C^2(\overline{B_R})$ by standard elliptic theory. This proves Theorem \ref{thstabi-Nge7}.
\end{proof}

\medskip

We conclude this subsection by showing that if an additional assumption is added we can improve the conclusion of Theorem \ref{thstabi2etoile} when $N=6$.

\begin{lem} \label{lemme6}
Assume that $N=6$ and $p_k=3$ for all $k \ge 0$. Let $(u_k)_{k\ge0}$ be a sequence of radial solutions of \eqref{LNTkr} that satisfies \eqref{energy} and let $u_0$ be its weak limit in $H^1(B_R)$. If $u_0$ is non-degenerate in the sense of \eqref{nondege} then $(u_k)_{k \ge0}$ is bounded in $L^\infty(B_R)$ (and hence precompact in $C^2(B_R)$).
\end{lem}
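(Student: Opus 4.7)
The plan is to argue by contradiction. Suppose that $(u_k)$ is not bounded in $L^\infty(B_R)$; up to a subsequence, $|u_k(0)| \to + \infty$. Specializing Theorem \ref{thblowup} (case 2) to $N=6$ and $p_k = 2^\ast = 3$ forces $u_0 > 0$, $u_0(0) = \tfrac12$, and the blow-up to consist of a single positive bubble $\tilde B_{1,k}$ centered at the origin; in particular $R$ must be a critical radius of the profile $u_0$. Proposition \ref{theorieC0} then yields the pointwise expansion
\[ u_k(r) = u_0(r) + \tilde B_{1,k}(r) + \phi_k(r), \qquad |\phi_k(r)| \le \varepsilon_k\bigl(\|u_0\|_{L^\infty(B_R)} + \tilde B_{1,k}(r)\bigr) + C\mu_{1,k}^2, \]
with $\varepsilon_k \to 0$ as $k \to +\infty$.

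The first step is to write a linear equation for $\phi_k$ and to exploit the non-degeneracy assumption to improve the above bound. A direct computation using $p_k = 2^\ast$ together with the equations for $u_0$ and $\tilde B_{1,k}$ gives
\[ L_0 \phi_k = \tilde B_{1,k}(1 + 2 u_0 - \tilde B_{1,k}) + 2 \tilde B_{1,k} \phi_k + \phi_k^2, \qquad L_0 \varphi := -\Delta \varphi + \varphi - 2 u_0 \varphi, \]
with Neumann boundary conditions. The non-degeneracy assumption \eqref{nondege} is precisely the statement that $L_0$ has trivial radial kernel under Neumann boundary conditions, hence it is an isomorphism of the radial subspace of $H^1(B_R)$. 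Inverting $L_0$ and carefully analyzing the decay of the source both near the bubble core and on the regular part of $B_R$, I expect to sharpen Proposition \ref{theorieC0} into $|\phi_k(r)| \le C \mu_{1,k}^2$ uniformly on $[\sqrt{\mu_{1,k}}, R]$, together with a matching $C^1$ control on $\phi_k(\delta)$ and $\phi_k'(\delta)$ for any fixed $\delta \in (0,R)$.

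The second step is to apply the Pohozaev identity \eqref{poho} on a fixed ball $B_\delta$, $\delta \in (0, R)$, and to subtract the same identity applied to $u_0$. In $N=6$, both $\int_{\R^6} B_0^2\,dx$ and $\int_{B_\delta} \tilde B_{1,k}\,dx$ are of order $\mu_{1,k}^2$, and the cross-term $2\int_{B_\delta} u_0 \tilde B_{1,k}\,dx$ is also of that order, with leading coefficient $u_0(0) \int_{\R^6} B_0\,dx$. Using the improved bound on $\phi_k$ from Step~1, all contributions involving $\phi_k$ on both the interior and boundary sides of the identity are $o(\mu_{1,k}^2)$. Matching the two sides at order $\mu_{1,k}^2$ yields a balance of the form
\[ \alpha + \beta u_0(0) = \gamma(\delta), \]
where $\alpha, \beta$ are explicit positive constants coming from $\int_{\R^6} B_0^2\,dx$ and $\int_{\R^6} B_0\,dx$, and $\gamma(\delta)$ depends only on $u_0(\delta)$ and $u_0'(\delta)$. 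Letting $\delta \to 0$, and recalling that the leading $\mu_{1,k}^2$-Pohozaev balance already forced $u_0(0) = \tfrac12$ in the proof of Theorem \ref{thblowup}, the left-hand side stabilizes at a fixed positive constant while $\gamma(\delta) \to 0$, giving the desired contradiction.

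The main obstacle is Step~1: the estimate $\phi_k = O(\mu_{1,k}^2)$ must hold in a norm strong enough to ensure that $\int_{B_\delta} u_0 \phi_k\,dx$ and the boundary values $\phi_k(\delta), \phi_k'(\delta)$ are $o(\mu_{1,k}^2)$, while the source $\tilde B_{1,k}(1 + 2u_0)$ of the equation for $\phi_k$ carries a bubble-like piece concentrating at the origin with $L^1$-mass of order $\mu_{1,k}^2$. Inverting $L_0$ therefore has to be done in weighted norms tailored to the bubble profile, and it is at this very point that the non-degeneracy assumption is needed (without it, $\phi_k$ could inherit a non-trivial kernel contribution of the same order as the leading bubble correction, which would contaminate the Pohozaev balance). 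Once the linear estimate is in place, Step~2 reduces to a careful but routine expansion of \eqref{poho}.
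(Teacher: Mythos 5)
Your outline identifies the correct two-phase structure (sharpen the remainder estimate using non-degeneracy, then run a Pohozaev argument), and your Step~1 is in the right spirit: the paper proves exactly such an improvement, by working with $v_k := u_k - u_0$, which solves $-\Delta v_k + h_0 v_k = v_k^2$ with $h_0 = 1 - 2u_0$, and showing $v_k = (1+o(1))B_{1,k} + O(\mu_{1,k}^2)$ with no residual $o(\|u_0\|_\infty)$ error, precisely because $-\Delta + h_0$ has no kernel. (Your displayed linearised equation has a sign slip: since $-\Delta \tilde B_{1,k} \approx \tilde B_{1,k}^2$, the source is $\tilde B_{1,k}(2u_0 - 1) + 2\tilde B_{1,k}\phi_k + \phi_k^2$, not $\tilde B_{1,k}(1 + 2u_0 - \tilde B_{1,k})$; and note that $h_0(0)=h_0'(0)=0$ is exactly what the constraint $u_0(0)=\tfrac12$ buys you.)

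Step~2, however, has a genuine gap, and the claimed contradiction does not materialize at the order $\mu_{1,k}^2$. Two concrete problems. First, the constant $\beta$ you introduce is supposed to come from $\int_{\R^6} B_0\,dx$, but in $N=6$ one has $B_0 \sim r^{-4}$ and this integral diverges; the cross-term $\int_{B_\delta} u_0 B_{1,k}\,dx$ is in fact $O(\mu_{1,k}^2\delta^2)$, picking up its mass from the far field of the bubble, not from a finite normalization constant. Second, and more importantly, the boundary quantity $\gamma(\delta)$ does \emph{not} tend to $0$ as $\delta \to 0$: after the subtraction, the leading boundary contribution at order $\mu_{1,k}^2$ comes from $\mu_{1,k}^{-2}v_k(\delta) \to (\text{const})\cdot G_{h_0}(\delta) \sim \delta^{-4}$ and from $\mu_{1,k}^{-2}v_k'(\delta) \sim \delta^{-5}$, and the $\delta^5$ prefactor in \eqref{poho} exactly cancels this singularity. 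The $O(\mu_{1,k}^2)$-level Pohozaev balance on $B_\delta$ is precisely the same balance that already yields $u_0(0) = \tfrac12$ in Theorem~\ref{thblowup}; it is an identity, not a new constraint, so it cannot give a contradiction.

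What the paper actually does at this point is to change the Pohozaev multiplier. Instead of applying \eqref{poho} (built with the multiplier $r\partial_r + \tfrac{N-2}{2}$) and subtracting the $u_0$ version, it multiplies the equation $-\Delta v_k + h_0 v_k = v_k^2$ by $r v_k'(r)$ \emph{alone}, which produces the identity \eqref{poho6} with the weight $h_0 + \tfrac12 r h_0'$ in front of $v_k^2$ on the interior side. Since $h_0(0) = h_0'(0) = 0$, this weight is $h_0''(0)r^2 + O(r^3)$ near $0$, so the $O(\mu_{1,k}^2)$ terms disappear and the leading interior contribution is $\approx h_0''(0)\int_{B_\delta} r^2 B_{1,k}^2\,dx \sim h_0''(0)\,\mu_{1,k}^4\ln(1/\mu_{1,k})$, which \emph{diverges} at the $\mu_{1,k}^4$ scale. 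The boundary side, controlled via $\mu_{1,k}^{-2}v_k \to G_{h_0}$ and the $N=6$ expansion \eqref{greenfunc2} of $G_{h_0}$ (which has a $\log$ correction whose coefficient is $\propto h_0''(0)$), remains $O(\mu_{1,k}^4)$. Matching forces $h_0''(0)=0$, i.e. $u_0''(0)=0$; but the radial ODE at $r=0$ with $u_0(0)=\tfrac12$, $u_0'(0)=0$ gives $u_0''(0) = \tfrac{1}{24}\neq 0$. This log-divergence mechanism, produced by the modified multiplier, is the missing idea in your Step~2.
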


\begin{proof}
By Theorem \ref{thblowup} we know that  $u_0$ is positive, satisfies $u_0(0) = \frac12$ and the blow-up arises with a single bubble at the origin. For any $k \ge 0$, we let $v_k = u_k - u_0$. By \eqref{LNTk} $v_k$ satisfies
\begin{equation} \label{eqvk}
 - \triangle v_k + h_0 v_k = v_k^2 \quad \text{ in } B_R, 
 \end{equation}
where we have let $h_0 = 1 - 2u_0$. As a consequence of the non-degeneracy of $u_0$ (see \eqref{nondege}) it is easily seen that $- \triangle + h_0$ has no kernel in the sense of \eqref{nokernel}. Hence $G_{h_0}$, the Green's function of $- \triangle + h_0$ with Neumann boundary condition, is uniquely defined. Since $h_0(0) = 0$ and $h_0'(0) = 0$, $G_{h_0}$ satisfies \eqref{greenfunc2}.

We will need a refinement of the Pohozaev identity for \eqref{eqvk}. Let $0 < \delta < R$ be fixed. Integrating \eqref{eqvk} against $rv_k'(r)$ gives, after a few integration by parts (see again \cite{HebeyZLAM}, Proposition $6.2$), that for any $k \ge 0$
\begin{equation} \label{poho6}
\begin{aligned}
& \int_{B_{\delta}} \big( h_0 + \frac12 r h_0'(r) \big) v_k^2 dx 
& = \omega_{5} \delta^{5} \Bigg( -\frac{\delta}{2}(v_k'(\delta))^2 - 2 v_k(\delta) v_k'(\delta) + \delta \frac{h_0(\delta)}{2} (v_k(\delta))^2 - \frac{\delta}{3} v_k(\delta)^{3}\Bigg)
\end{aligned}
\end{equation}
holds. By Proposition \ref{theorieC0} applied to $u_k$ and by definition of $v_k$ we have 
\begin{equation} \label{estvk1}
 v_k(r) = \big( 1 + o(1) \big) B_{1,k}(r) + O(\mu_{1,k}^2) + o(\Vert u_0 \Vert_{L^\infty(B_R)}) 
 \end{equation}
as $k \to + \infty$, for any $r \in [0,R]$, where $B_{1,k}$ is as in \eqref{bulles}. We claim that we actually have 
\begin{equation} \label{estvk2}
 v_k(r) = \big( 1 + o(1) \big) B_{1,k}(r) + O(\mu_{1,k}^2) 
 \end{equation}
for any $r \in [0,R]$. This follows from an adaptation of the arguments in the proof of Proposition \ref{theorieC0}. More precisely, it is easily seen with \eqref{estvk1} that \eqref{C06} still holds for $v_k$ with $L=1$ and $\eta_k(\delta_\e)$ now defined as $\eta_k(\delta_\e) =\max_{[\delta_\e, R]} |v_k|$. As a consequence, \eqref{C07} remains true and is proven in the same way. Since $- \triangle + h_0$ has no kernel the same arguments as in the proof of \eqref{prop1} then show that $\eta_k(\delta_\e) = O(\mu_{1,k}^2)$, from which \eqref{estvk2} follows.

With \eqref{eqvk} and \eqref{estvk2} we then get that $\mu_{1,k}^{-2} v_k(r) \to G_{h_0}(r)$ in $C^2_{loc}(]0,\delta])$ as $k \to + \infty$. Using \eqref{greenfunc2} we then have 
\[ \begin{aligned}
\mu_{1,k}^{-4} \omega_{5} \delta^{5} \Bigg( -\frac{\delta}{2}(v_k'(\delta))^2 - 2 v_k(\delta) v_k'(\delta) + \delta \frac{h_0}{2} (v_k(\delta))^2 - \frac{\delta}{3} v_k(\delta)^{3}\Bigg) 
\to 2 C_0 \ln \delta + O(1) \quad \text{ as } k \to + \infty,
\end{aligned} \]
where $O(1)$ denotes a quantity that remains bounded as $\delta \to 0$. (Note however that $\delta$ is fixed in this proof). Independently, and by Proposition \ref{theorieC0}, we have
\[  \int_{B_{\delta}} \big( h_0 + \frac12 r h_0'(r) \big) v_k^2 dx  =(24)^5 \omega_5 h_0''(0) \mu_{1,k}^4 \ln \frac{1}{\mu_{1,k}} + o \big(\mu_{1,k}^4 \ln \frac{1}{\mu_{1,k}}  \big)   \]
as $k \to + \infty$. Plugging the latter two estimates in \eqref{poho6} shows that 
\[ h_0''(0) = O \Big( \big(\ln \frac{1}{\mu_{1,k}} \big)^{-1} \Big) \]
as $k \to + \infty$, hence $h_0''(0) = 0$. Since $h_0 = 1 - 2 u_0$ we thus have $u_0'(0) = u_0''(0) = 0$. However, applying \eqref{LNTkr} at $r=0$ with $u_0(0) = \frac12$ and $u_0'(0) = 0$ yields $u_0''(0) = \frac{1}{24}$, a contradiction.
\end{proof}

Lemma \ref{lemme6} provides an additional precompactness results in case the non-degeneracy assumption is satisfied. The peculiarity of the six-dimensional case for critical equations like \eqref{LNT} has been known since \cite{DruetJDG} (see also \cite{HebeyZLAM}). Theorem \ref{thstabi2etoile} and Lemma \ref{lemme6} are among the few precompactness results available in dimension $6$ for equations like \eqref{LNTk}, and they entirely rely on the assumption that the solutions considered are radial. Note that the non-degeneracy of $u_0$ is a generic assumption in the choice of $R$, see Proposition \ref{nondegu0} below. This genericity is however not enough to close Theorem \ref{thstabi2etoile}.

\begin{rmq}[Critical case, $N=6$]\label{Rem:N=6crit}
Theorem \ref{thstabi2etoile} states that precompactness for finite-energy solutions of \eqref{LNTk} when $N=6$ and $p_k = 3$ for all $k \ge 0$ holds at least if $R \not \in \{R_\ell\}_{\ell \ge 1}$ for a given discrete sequence $(R_\ell)_{\ell\ge 1}$. If $R = R_\ell$ for some $k \ge 0$, as shown in the proof of Theorem \ref{thstabi2etoile}, every finite-energy sequence $(u_k)_{k \ge0}$ of \eqref{LNTk} has the same weak limit which is given by \eqref{contrainteu0}. Lemma \ref{lemme6} then allows to completely prove Conjecture \ref{conjstabi2etoile} in dimension $N=6$ if one can show $u_0$ is non-degenerate in the sense of \eqref{nondege} when $R = R_\ell$. 
The solution $u_0$ is non-degenerate (for $R = R_\ell$) if the unique solution $v_0$ of 
\begin{equation}\label{eq:lin}
 - v'' - \frac{5}{r}v' + v-2 u_0 v = 0 \text{ in } [0, + \infty) , \quad v'(0) = 0, v(0) = 1 
 \end{equation}
is such that $v'(R_\ell)\ne 0$. Indeed this linear equation has a two-dimensional basis of solutions and $v_0$ is the only one which can span an element of the kernel of the linearization at $u_0$. 
Numerics provide an evidence that $u_0$ is non-degenerate, see Figure \ref{fig-nondege}.
Proving this fact theoretically seems quite tricky and very sensitive on the parameters involved. For instance, degenerate radial solutions have been highlighted in \cite{MR3564729} for other values of the parameters (dimension and power).
\begin{figure}[h!t]\label{fig-nondege}
\begin{center}
{\includegraphics[height=6cm]{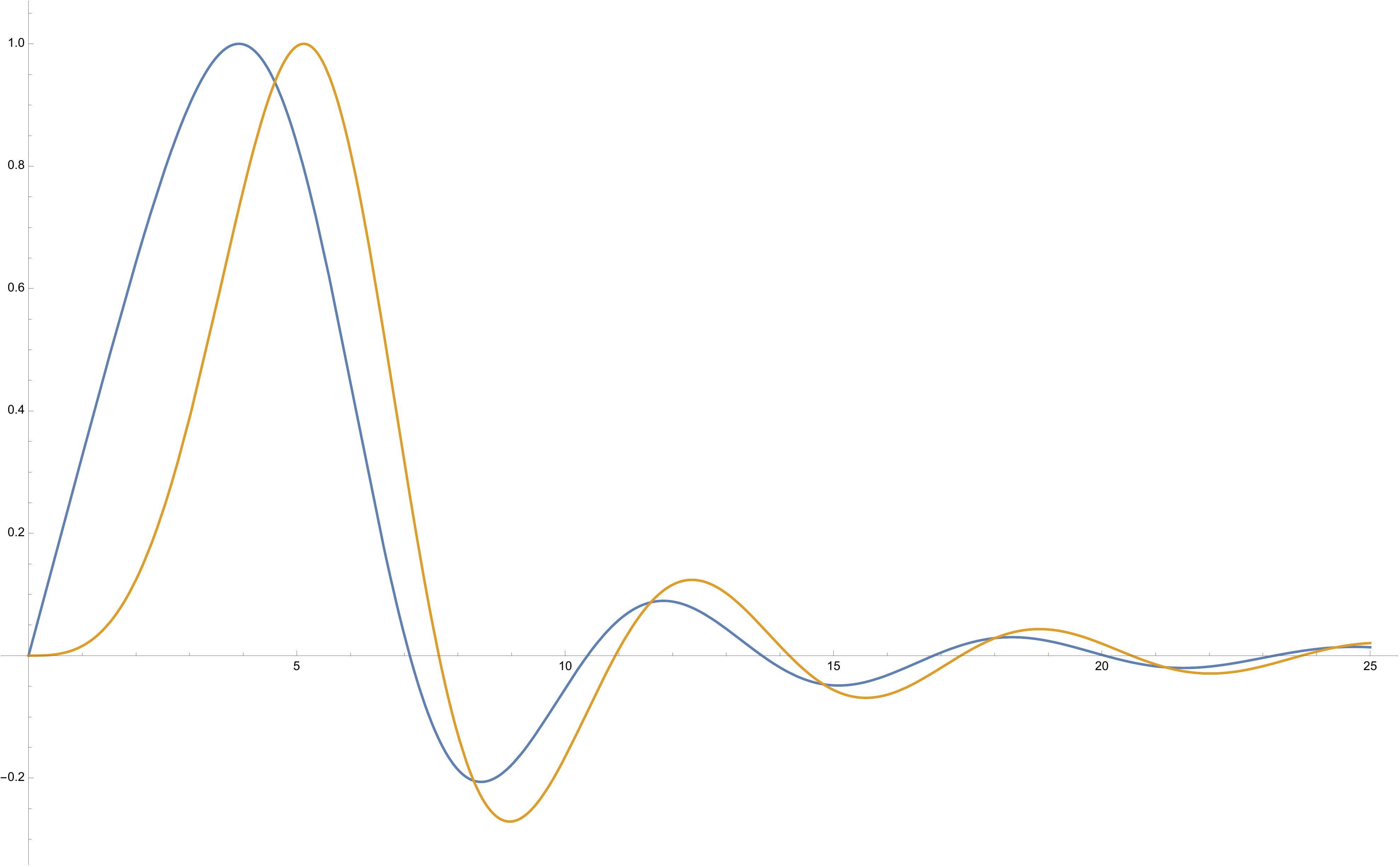}}
\caption{The derivative of (the numerically computed) $u_0$ is plotted in blue while the derivative of the (numerical) solution $v_0$ of \eqref{eq:lin} is in orange. Both curves have been normalized to fit in the same plot (since we are only interested by their roots). 
It seems clear, at least for the first zeros, that the derivative of $u_0$ and $v_0$ do not vanish at the same points.}
\end{center}
\end{figure}
\end{rmq}

\section{Lyapunov-Schmidt construction} \label{LS}

The goal of this section is to prove Theorems \ref{thinstabi-typeB} to \ref{thinstabi}. For clarity, we will change the notations with respect to the previous sections. For $\e \neq 0, |\e| < 1$, we consider solutions of the Neumann problem
\begin{equation}
\label{eq}
\left \{ \begin{aligned} -\Delta u +u & = u^{\frac{N+2}{N-2}-\varepsilon} \ \text{in}\ B_R \subset \R^N,\\ u&>0 ,\\ \partial_\nu u & =0\ \text{on}\ \partial B_R. \end{aligned} \right.
\end{equation}
If $(\e_k)_{k \ge0}$ is a sequence of real numbers going to $0$ as $k \to + \infty$, a solution of \eqref{eq} with $\e = \e_k$ is a positive solution of \eqref{LNTk} with $p_k = 2^*-\e_k$. We let $u_0\in C^{2,\theta}(\overline{B_R})$, $\theta \in (0,1)$, be a nondegenerate radial solution of \eqref{eq} with $\e=0$. Solutions of type $u_0+B$ or $B$ are defined as in \eqref{u0B}.

We will construct blowing up famillies $(u_\e)_{\e >0}$ of radial solutions of \eqref{eq} in the following cases: 
\begin{enumerate}
\item (subcritical blow-up) for $\e\to 0$ with $\e >0$, 
\begin{itemize}
\item solutions of type $B$ for any $N \ge 4$ as announced in Theorem \ref{thinstabi-typeB}.
\item solutions of type $u_0 + B$ as announced in Theorem \ref{thinstabi_u0+Bsub} under the following assumptions: 

$\diamond$ $N \ge 7$ or 

$\diamond$ $N=6$ and $u_0(0)< \frac12$. 
\end{itemize}
\item (supercritical blow-up) for  $\e\to 0$ with $\e < 0$, solutions of type $u_0 + B$ as announced in Theorem \ref{thinstabi} under the following assumptions: 

$\diamond$ $3\leq N \le 5$ or 

$\diamond$ $N=6$ and $u_0(0)> \frac12$.
\end{enumerate}
In view of Theorem \ref{thstabi2etoile} to \ref{thblowup-intro-supercritical}, this covers all the cases where radial blow-up can occur, except the case $N=3$ for which we refer to Remark \ref{remN3} below. Also, we do not address the existence of towers of bubbles. 
As proven in Theorem \ref{thblowup}, the assumption $u_0 >0$ is necessary when $\e <0$.

\subsection{Preliminaries and notation}

\medskip

In all the following, we will denote by $\left\langle .,.\right\rangle$ the scalar product given, for $u,v\in H^1 (B_R)$, by
$$\left\langle u,v\right\rangle= \int_{B_R} (\nabla u  \cdot\nabla v+uv) dx,$$
where $\cdot$ denotes the scalar product in $\R^n$, and by $\Vert \cdot\Vert$ the associated norm in $H^1(B_R)$.
We denote by $i^\ast : L^{\frac{2N}{N+2}}(B_R)\rightarrow H^1(B_R)$ the adjoint operator of the embedding $i: H^1(B_R)\rightarrow L^{\frac{2N}{N-2}}(B_R)$, 
i.e. for all $\varphi \in L^{\frac{2N}{N+2}}(B_R)$, the function $u=i^\ast (\varphi)\in H^1(B_R)$ is the unique solution 
of 
\begin{equation*}
\left \{ \begin{aligned} -\Delta u +u & = \varphi \ \text{in}\ B_R \subset \R^N,\\ \partial_\nu u & =0,\ \text{on}\ \partial B_R. \end{aligned} \right.
\end{equation*}
In particular, $\Vert u \Vert \le C \Vert \varphi \Vert_{L^{\frac{2N}{N+2}}(B_R)}$ for some $C = C(N,R) >0$. In view of equation \eqref{eq}, we consider the following problem:
find a positive $u\in H^1(B_R)$ such that
$$u=i^\ast(f_\varepsilon (u)),$$
where we have let, for $u \in \R$, 
$$f_\varepsilon (u)=(u)_+^{2^\ast -1-\varepsilon}$$
and $u_+ = \max(u,0)$. For $s> \frac{2N}{N-2}$ and by definition of $i$ we have, by standard elliptic regularity results,
\begin{equation} \label{esterreur}
\|i^\ast (w)\| + \| i^{\ast}(w) \|_{L^s(B_R)} \leq C \|w\|_{L^{\frac{Ns}{N+2s}}(B_R)}
\end{equation}
for some $C = C(N,R,s) >0$.  For $\varepsilon$ small enough, we define
$$\alpha_\varepsilon = \begin{cases} 2^\ast - \frac{N}{ 2}\varepsilon,& \text{if}\ \varepsilon <0,\\ 2^\ast , & \text{if}\ \varepsilon >0. \end{cases}$$
This exponent coincides with the exponent $\alpha_k$ defined by \eqref{energy} since $p_k = 2^*-\e$ in this section. We let $\mathcal{H}_\varepsilon = H^1 (B_R) \cap L^{\alpha_\varepsilon} (B_R)$ endowed with the norm
$$\|w\|_{\alpha_\e} =\|w\| +\|w\|_{L^{\alpha_\varepsilon}(B_R)},$$
which is a Banach space. It is known (see \cite{CaGiSp}, \cite{Obata}) that all the positive solutions $u$  of the equation
$$-\Delta u=u^{2^\ast-1} \text{ in } \R^N$$
are given by
$$B_{\delta, y}(x)=\delta^{\frac{2-N}{2}}B_0 \Big(\frac{x-y}{\delta} \Big),\ \delta >0,\ y\in \R^n$$
where $B_0$ is as in \eqref{B0}. It is also well known (see \cite{BianchiEgnell}) that all $v\in \dot{H}^1(\R^N)$ 
solving
$$-\Delta v= (2^\ast -1)B_0^{2^\ast-2}v$$
are linear combinations of
$$V_0(x)= \dfrac{|x|^2-1}{(1+|x|^2)^{\frac{N}{2}}}$$
and
$$V_i(x)= \dfrac{x_i}{(1+|x|^2)^{\frac{N}{2}}},\ i=1,\ldots ,N. $$
If $v$ is in addition assumed to be radial it is then a multiple of $V_0$.

\medskip

From now on we will only consider \emph{radial} solutions of \eqref{eq}. Let $\chi : \R\rightarrow \R$ be a smooth cutoff function such that $0\leq \chi\leq 1$, $\chi(x)=1$ if $|x|\leq  \frac{R}{4}$ and $\chi(x)=0$ if $|x|\geq  \frac{R}{2}$. We define, for any $\lambda >0$, 
$$B_{\lambda}(x)=\chi (|x|)\lambda^{\frac{2-N}{2}}B_0(\lambda^{-1}|x|)$$
and
$$Z_{\lambda ,0}(x)=\chi (|x|) \lambda^{\frac{2-N}{2}} V_0 (\lambda^{-1}|x|).$$
These are radial functions in $B_R$ and are compactly supported in $B_R$, thus have zero Neumann derivative. We denote by $\Pi_{\lambda}$, respectively $\Pi_{\lambda}^{\bot}$, the orthogonal projection of $H^1(B_R)$ onto
$$K_{\lambda}=\mathrm{span}\left\{Z_{\lambda ,0}\right\},$$
respectively onto
\begin{equation}\label{nodegen}K_{\lambda}^{\bot}=\left\{\phi \in H^1(B_R)\slash \left\langle \phi , Z_{\lambda ,0}\right\rangle =0 \right\}.\end{equation}
We recall that a solution $u_0$ of \eqref{eq} with $\e=0$ is said to be nondegenerate if \eqref{nondege} is satisfied with $h_0 \equiv 1$. 

\begin{rmq}[Encoding the radius of the ball with a parameter]  \label{remscaling}
If $\mu >0$ is fixed it is easily seen that $u$ solves $-\Delta u + \mu u =u^{2^*-1}$ in $B_R$, with $\partial_{\nu} u = 0$ on $\partial B_R$, if and only if $v(x):= \mu^{-\frac{N-2}{4}} u (\frac{x}{\sqrt{\mu}})$ solves $-\Delta v+v=v^{2^*-1}$ in $B_{R \sqrt{\mu}}$ with $\partial_{\nu} v = 0$ on $\partial B_{R \sqrt{\mu}}$ (no radiality is assumed here). Clearly, $v$ is non-degenerate if and only if $u$ is non-degenerate.
\end{rmq} 

With Remark \ref{remscaling} in mind, the following result shows that the existence of non-degenerate solutions of \eqref{eq} when $\e = 0$ is a generic property in the choice of the radius $R$:
\begin{prop}
\label{nondegu0}
Let $u_0 \in H^1(B_R)$ be a positive solution to 
$$\left \{ \begin{aligned}-\Delta u_0 + \mu u_0 & = u_0^{2^\ast -1},& \text{in}\ B_R,\\ \partial_\nu u_0 & =0,& \text{on}\ \partial B_R\end{aligned} \right.$$ for some $\mu>0$. Fix $\nu>0$. Then, there exist $\mu_\nu >0$ and $u_{0,\nu} >0$ such that $|\mu_\nu -\mu|+\|u_0 - u_{0,\nu} \|_{C^{2,\theta}(\overline{B_R})}<\nu$, $\theta \in (0,1)$, and $u_{0,\nu}>0$ is a non-degenerate solution of
$$\left \{ \begin{aligned}-\Delta u_{0,\nu}+\mu_\nu u_{0,\nu}& =u_{0,\nu}^{2^\ast -1}, \text{ in }\ B_R,\\ \partial_\nu u_{0,\nu} &=0 \text{ on }\ \partial B_R.\end{aligned} \right.$$
\end{prop}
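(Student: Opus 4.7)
The plan is to combine the real-analytic dependence of radial solutions on their parameters with a spherical harmonic decomposition of the linearization. First I would use Remark \ref{remscaling} to reduce the problem to a radius perturbation: a rescaling shows that perturbing $\mu$ with $R$ fixed is equivalent to fixing $\mu$ and perturbing $R$, so I would seek a nearby radius $R_\nu$ carrying a non-degenerate positive radial Neumann solution close, after rescaling back, to $u_0$. Radial solutions are naturally parameterized by the ODE initial value $a := u(0)$: letting $u(r;a)$ be the real-analytic solution of
\[
-u''-\tfrac{N-1}{r}u'+\mu u = u^{2^*-1},\qquad u(0)=a,\ u'(0)=0,
\]
Neumann radial solutions on $B_R$ correspond to zeros of the real-analytic function $F(a,R) := u'(R;a)$, and $u_0$ provides such a zero $(a_0, R)$ with $a_0 := u_0(0)$.

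Next I would characterize non-degeneracy mode by mode. Expanding in spherical harmonics, the linearization $L := -\Delta + \mu - (2^*-1)u_0^{2^*-2}$ acts on the mode-$\ell$ subspace as the 1D operator
\[
L_\ell\psi := -\psi'' - \tfrac{N-1}{r}\psi' + \Big(\tfrac{\ell(\ell+N-2)}{r^2}+\mu-(2^*-1)u_0^{2^*-2}\Big)\psi.
\]
Letting $\psi_\ell(r;a,R)$ be the Frobenius solution of $L_\ell\psi = 0$ regular at $r=0$ (normalized so $\psi_\ell(r)\sim r^\ell$) and setting $H_\ell(a,R) := \psi_\ell'(R;a,R)$, the solution $u(\cdot;a)$ on $B_R$ is non-degenerate in the sense of \eqref{nondege} if and only if $H_\ell(a,R) \neq 0$ for every $\ell \geq 0$. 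The centrifugal term makes $L_\ell$ coercive on $B_R$ as soon as $\ell(\ell+N-2)/R^2 > (2^*-1)\|u_0\|_\infty^{2^*-2}$, so $H_\ell$ cannot vanish beyond a threshold $\ell^*$, and only finitely many modes need to be examined.

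The zero set $\{F=0\}$ is a real-analytic variety in the $(a,R)$-plane; near $(a_0,R)$ it decomposes, by Weierstrass preparation, into finitely many analytic branches, and I would parameterize one through $(a_0,R)$ analytically by $t$ with $(a(t_0), R(t_0)) = (a_0, R)$. Each of the finitely many compositions $t \mapsto H_\ell(a(t), R(t))$ is then real-analytic, and provided none of them is identically zero each has only isolated zeros near $t_0$; picking $t_\nu$ arbitrarily close to $t_0$ avoiding them all yields a positive, non-degenerate solution $u(\cdot; a(t_\nu))$ on $B_{R(t_\nu)}$, $C^{2,\theta}$-close to $u_0$ by analytic dependence, and translating $R(t_\nu)$ back via Remark \ref{remscaling} produces the desired $\mu_\nu$ and $u_{0,\nu}$.

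I expect the main obstacle to be verifying that no $H_\ell$ vanishes identically along the branch. I would first observe that $H_\ell(a,R)$ is not identically zero on the whole $(a,R)$-plane: as $a \to 0$, the solution $u(\cdot;a)$ converges to $0$ uniformly on compacts, so $L_\ell$ converges in norm to $(-\Delta + \mu)$ restricted to the mode-$\ell$ subspace, which is strictly positive under Neumann conditions, giving $H_\ell > 0$ for small $a$ and every $R$. Hence $\{H_\ell = 0\}$ is a proper real-analytic subvariety, whose intersection with the analytic curve $\{F=0\}$ is either discrete or contains a full irreducible component; the latter persistent degeneracy is the genuine difficulty. I would rule it out by a Crandall--Rabinowitz-type bifurcation argument, noting that for $\ell \geq 1$ a persistent non-radial kernel along a one-parameter family of radial solutions would generate a two-parameter family of bifurcating non-radial solutions, incompatible with the analytic structure of the solution set, while for the radial mode $\ell = 0$ persistent vanishing of $H_0$ along the branch contradicts the fact that the branch is itself parameterized by $t$ via an analytic submersion.
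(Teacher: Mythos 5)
Your route is genuinely different from the paper's, and it hits a real gap at the step you yourself flag as the main obstacle.

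The paper (following \cite{RobertVetois3}) never decomposes the linearization into modes. It introduces the shifted Rayleigh quotient
\[
A_\eta = \inf_{u\in H^1(B_R)\setminus\{0\}}\frac{\int_{B_R}\big(|\nabla u|^2 + (\mu - (2^*-1)u_0^{2^*-2} - \eta)u^2\big)\,dx}{\big(\int_{B_R}|u|^{2^*}\,dx\big)^{2/2^*}},
\]
takes a positive minimizer $w_\eta$ of the associated Euler--Lagrange problem, proves by a maximum-principle argument on the quotient of two positive solutions that $u_0$ is the \emph{unique} positive solution when $\eta=0$ (whence $A_0 = -(2^*-2)$ and $w_\eta \to u_0$ in $C^{2,\theta}$), and then rescales: setting $\mu_\eta = \mu - (2^*-1)(u_0^{2^*-2}-w_\eta^{2^*-2}) - \eta$ and $u_{0,\eta} = (2^*-1+A_\eta)^{(N-2)/4}w_\eta$, the linearization at $u_{0,\eta}$ is, up to a term vanishing as $\eta \to 0$, the constant shift $-\Delta + \mu - (2^*-1)u_0^{2^*-2} - \eta$, which has no kernel for all but a discrete set of $\eta$'s. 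Non-degeneracy is delivered by an additive spectral shift, with no need to analyze individual modes, and the argument applies without any radiality assumption on $u_0$, whereas your ODE parameterization by $a = u(0)$ inherently does.

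Your proposal stakes everything on showing that no $H_\ell$ vanishes identically along the analytic branch $\{F=0\}$, and the argument offered there does not work. For $\ell \geq 1$, a persistently vanishing $H_\ell$ along a one-parameter family of radial solutions is precisely the regime in which the Crandall--Rabinowitz transversality hypothesis fails; in that case the bifurcation theorem is silent, no non-radial solutions are forced to appear, and there is no conflict whatsoever with the analytic structure of the solution set. Persistent symmetry-induced degeneracies along solution curves are commonplace, and ruling them out here would itself require a substantive new argument. The $\ell = 0$ case is also not dispatched as stated: $H_0 = \partial_a F$, and $\partial_a F \equiv 0$ on $\{F=0\}$ is not directly contradicted by the branch being an analytic curve --- it forces the branch tangent to be horizontal wherever $\partial_R F \neq 0$, so one still has to show that $a \mapsto F(a, R_0)$ cannot vanish on a continuum; calling the parameterization an ``analytic submersion'' does not supply this. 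The reduction to $R$-perturbations via Remark \ref{remscaling}, the spherical-harmonic splitting, and the coercivity-based finite-mode truncation are all correct, but the core non-identical-vanishing step is left as a heuristic, whereas the paper's $\eta$-shift resolves the non-degeneracy in one stroke.
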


\begin{proof}
We proceed as in \cite{RobertVetois3}. We set, for $\eta \geq 0$,
$$A_\eta =\inf_{u\in H^1 (B_R) \backslash \{0\}} \dfrac{\int_{B_R} \Big(|\nabla u|^2 + (\mu - (2^\ast -1)u_0^{2^\ast -2} -\eta )u^2 \Big)dx }{(\int_{B_R} |u|^{2^\ast} dx)^{\frac{2}{2^\ast}}}.$$
It is easy to see that $A_\eta <0$ and $\lim_{\eta \rightarrow 0} A_\eta =A_0<0$. Direct minimisation yields, for all $\eta \ge 0$, a minimizer $w_\eta \in C^{2,\theta}(\overline{B_R})$, $w_\eta >0$, such that 
\begin{equation} \label{Eeta}
\left \{ \begin{aligned} -\Delta w_\eta + (\mu - (2^\ast -1)u_0^{2^\ast -2} -\eta)w_\eta & = A_\eta w_\eta^{2^\ast -1} \text{  in } B_R \\
 \partial_{\nu} w_\eta  & =0 \text{ on } \partial B_R \end{aligned} \right. .
 \end{equation}
With \eqref{Eeta}, and since $A_\eta < 0$ for all $\eta \ge 0$, it is easily seen that the family $(w_\eta)_{\eta \geq 0}$ is relatively compact in $C^2 (\overline{B_R})$. 

We now claim that, when $\eta = 0$, $u_0$ is the only solution of \eqref{Eeta} that is positive in $\overline{B_R}$. We assume that $u$ and $v$ are two such solutions and we let $x_0 \in \overline{B_R}$ be the maximum point of $\frac{u}{v}$ in $\overline{B_R}$. Straightforward computations show that $w := \frac{u}{v}$ satisfies
$$ - \triangle w - A_0 w^{2^*-1}\Big(1 - \frac{1}{w^{2^*-2}} \Big) = 2 \langle \nabla w, \nabla \ln v \rangle \quad \text{ in } \overline{B_R}. $$
By definition of $x_0$, and since $\partial_\nu w = 0$ in $\partial B_R$, we always have $\nabla w(x_0)= 0$, regardless of whether $x_0$ is an interior or boundary point. If we write $w(x) = w(r\omega)$ in spherical coordinates for $x = r\omega \in \overline{B_R}$, the condition that $x_0$ is the global maximum of $w$ in $\overline{B_R}$ implies both $\triangle_{\mathbb{S}^{n-1}}w(R \cdot) \le 0$ and, since $\partial_{\nu} w(x_0) = 0$, $\partial_{\nu}^2 w(x_0) \le 0$. Therefore $- \triangle w(x_0) \ge 0$, so that applying the latter equation at $x_0$ and since $A_0 <0$ yields $w(x_0) \le 1$. This implies that $u \le v$ in $\overline{B_R}$. The same argument with the roles of $u$ and $v$ reversed also shows that $v \le u $ in $\overline{B_R}$, and thus $u \equiv v$ in $\overline{B_R}$.

A first consequence of this uniqueness result is that $A_0 = - (2^*-2)$, which is easily seen integrating \eqref{Eeta} by parts against $u_0$ when $\eta =0$. As a second consequence, we get that $\|w_\eta - u_0\|_{C^{2,\theta}(\overline{B_R})} \rightarrow 0$ as $\eta \rightarrow 0$. Let now, for $\eta >0$, $\mu_\eta = \mu - (2^\ast -1) (u_0^{2^\ast -2} -w_\eta^{2^\ast -2}) - \eta$. Clearly $\lim_{\eta \to 0} \mu_\eta = \mu  >0$. By \eqref{Eeta} $w_\eta$ satisfies
$$ - \triangle w_\eta + \mu_\eta w_\eta = \big( 2^*-1 + A_\eta \big) w_\eta^{2^*-1}  \quad \text{ in } \overline{B_R}.  $$
We let $u_{0,\eta} =  \big( 2^*-1 + A_\eta \big)^{\frac{N-2}{4}}w_\eta$. Since $2^*-1 + A_\eta \to 1$ as $\eta \to 0$ we have $u_{0,\eta} \to u_0$ in $C^2(\overline{B_R})$ as $\eta \to 0$. Let $\nu >0$ be fixed. It remains to choose $\eta >0$ small enough so that $- \triangle + \big( 1 - (2^*-1) u_0^{2^*-2} - \eta \big)$ has no kernel in the sense of \eqref{nokernel} and $|\mu_\eta -\mu|+\|u_0 - u_{0,\eta} \|_{C^{2,\theta}(\overline{B_R})}<\nu$. This choice of $\eta$ implies that $u_{0,\eta}$ is still non-degenerate and concludes the proof.
\end{proof}

\subsection{Finite dimensional reduction}

To prove Theorem \ref{thinstabi} we will look, for $\e \neq 0$, for a radial solution $u_\e$ to \eqref{eq} of the form $$u_\e=u_0+ B_{\lambda_\e (t)}+\phi_\e(t),$$
where $u_0$ is a nondegenerate nonnegative radial solution of \eqref{eq}, $\phi_\e (t)\in K_{\lambda_\e (t)}^{\bot}$ is a radial function and, for any $t >0$ and $\e \neq 0$,
\begin{equation}\label{delta}
\lambda_\e (t)=\left\{\begin{aligned}
& (|\e|t)^2 & \text{ if } N=3, \\
&|\e|t  &\text{ if } N=4 \text{ and }   u_0 >0, \\
&\Big( \frac{|\e|}{\ln \frac{1}{|\e|}}\Big)^{\frac12} t 
& \text{ if }\  N=4 \text{ and } u_0=0,  \\
&(|\varepsilon|t)^{\frac{2}{3}} & \text{ if }\ N=5  \text{ and }  u_0 >0, \\ 
&\sqrt{|\varepsilon|t} & \text{ if } N=5 \text{ and } u_0=0,\\
&\sqrt{|\varepsilon|t} & \text{ if } N\ge 6. \
\end{aligned}\right. \end{equation}

In the following, for the sake of simplicity, we will often write $\lambda(t)$ and $\phi(t)$ instead of $\lambda_\e(t)$ and $\phi_\e(t)$. It is well-known that equation \eqref{eq} is equivalent to the following system
\begin{equation}
\label{eq1}
\Pi_{\lambda (t) }(u_0 + B_{\lambda (t) }+\phi(t) -i^\ast (f_\varepsilon(u_0+ B_{\lambda (t) }+\phi(t) )))=0,
\end{equation}
and
\begin{equation}
\label{eq2}
\Pi_{\lambda (t) }^\bot(u_0 + B_{\lambda (t) }+\phi(t)  -i^\ast (f_\varepsilon(u_0 +B_{\lambda (t) }+\phi(t))))=0.
\end{equation}
Since $u_0$ and $B_{\lambda(t)}$ satisfy the Neumann boundary condition, and by definition of $i^\ast$, $\phi (t)$ also satisfies the Neumann boundary condition in the two previous equations. We begin by solving \eqref{eq2}. Classically this requires a quantitative estimate on the error of the approximate solution  $u_0 + B_{\lambda(t)}$

From now on, if $a_\varepsilon$ and $b_\varepsilon$ denote two positive families of real numbers, we will write $a_\varepsilon \lesssim b_\varepsilon$ if there exists a constant $C$ independent of $\varepsilon$ and $t$ such that
$$a_\varepsilon \leq C b_\varepsilon \text{ for all } \varepsilon.$$

\begin{lem}
\label{reste}
Let $0 <a <b$ be real numbers. There exists a positive constant $C_{a,b}$ such that for $|\varepsilon|$ small  we have
 $$
 \begin{aligned}
 \left\|i^\ast (f_\varepsilon (u_0+B_{\lambda (t)}))-u_0-B_{\lambda (t) }\right\|_{\alpha_\varepsilon} 
 & \leq C_{a,b}
\left \{ \begin{aligned}
& \Big(\frac{|\e|}{\ln \frac{1}{|\e|}}\Big)^{\frac{1}{2}} \text{ if } N=4\ \text{ and } u_0=0,\\ 
& |\varepsilon|^{\frac{3}{4}}   \text{ if } N=5 \text{ and } u_0=0,\\ 
&|\varepsilon| \ln \frac{1}{|\varepsilon |}   \text{ otherwise }. \end{aligned} \right.
\end{aligned}$$
uniformly  for $t \in [a, b]$. 
\end{lem}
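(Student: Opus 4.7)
The plan is to reduce the norm estimate to a pointwise analysis of the residual of the ansatz. First I would invoke \eqref{esterreur} with $s=\alpha_\varepsilon$ when $\varepsilon<0$ (so $\alpha_\varepsilon>2^\ast$), and, when $\varepsilon>0$ (so $\alpha_\varepsilon=2^\ast$), combine the classical $L^{2N/(N+2)}$--$H^1$ estimate for $i^\ast$ with the Sobolev embedding $H^1(B_R)\hookrightarrow L^{2^\ast}(B_R)$. Both routes give
\begin{equation*}
\|i^\ast(f_\varepsilon(u_0+B_{\lambda(t)}))-u_0-B_{\lambda(t)}\|_{\alpha_\varepsilon}
\;\le\; C\, \|R_\varepsilon\|_{L^{q_\varepsilon}},
\end{equation*}
with $q_\varepsilon\to 2N/(N+2)$ as $\varepsilon\to 0$ and $R_\varepsilon:=f_\varepsilon(u_0+B_{\lambda(t)})+\Delta(u_0+B_{\lambda(t)})-(u_0+B_{\lambda(t)})$.

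Next I would split $R_\varepsilon=I_1+I_2+I_3$ with
\begin{align*}
I_1 &:=\; f_\varepsilon(u_0+B_\lambda)-(u_0+B_\lambda)^{2^\ast-1},\\
I_2 &:=\; (u_0+B_\lambda)^{2^\ast-1}-u_0^{2^\ast-1}-B_\lambda^{2^\ast-1},\\
I_3 &:=\; u_0^{2^\ast-1}+B_\lambda^{2^\ast-1}+\Delta(u_0+B_\lambda)-(u_0+B_\lambda).
\end{align*}
Using that $u_0$ solves \eqref{eq} with $\varepsilon=0$ collapses $I_3$ to $B_\lambda^{2^\ast-1}+\Delta B_\lambda-B_\lambda$. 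Introducing the untruncated bubble $\widetilde B_\lambda(x)=\lambda^{(2-N)/2} B_0(x/\lambda)$, which satisfies $-\Delta\widetilde B_\lambda=\widetilde B_\lambda^{\,2^\ast-1}$ in $\R^N$, and expanding the commutator of $-\Delta$ with the cutoff $\chi$, reduces $I_3$ to $-B_\lambda$ modulo a truncation error supported in the annulus $\{R/4\le |x|\le R/2\}$ and pointwise of order $\lambda^{(N-2)/2}$, which is negligible in every $L^q$ norm.

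Third, I would bound each $I_j$ in $L^{q_\varepsilon}$. The rescaling $y=x/\lambda$ shows that $\|B_\lambda\|_{L^{q_\varepsilon}}$ scales as $\lambda^{(N-2)/2}$ for $N\in\{3,4,5\}$ (where $q_\varepsilon<N/(N-2)$), as $\lambda^{2}\log^{2/3}(1/\lambda)$ for $N=6$ (the borderline case $q_\varepsilon=N/(N-2)$), and as $\lambda^{2}$ for $N\ge 7$, yielding the estimate on $\|I_3\|_{L^{q_\varepsilon}}$. For $I_1$, the Taylor expansion $v^{2^\ast-1-\varepsilon}=v^{2^\ast-1}\bigl(1-\varepsilon\log v+O(\varepsilon^2\log^2 v)\bigr)$ combined with the straightforward bound $\|(u_0+B_\lambda)^{2^\ast-1}\log(u_0+B_\lambda)\|_{L^{q_\varepsilon}}\lesssim \log(1/\lambda)$, the logarithm coming from the peak value $B_\lambda(0)\sim \lambda^{-(N-2)/2}$, gives $\|I_1\|_{L^{q_\varepsilon}}\lesssim |\varepsilon|\log(1/|\varepsilon|)$. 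When $u_0\not\equiv 0$, the interaction term $I_2$ is controlled via the pointwise bound $|(a+b)^{2^\ast-1}-a^{2^\ast-1}-b^{2^\ast-1}|\lesssim a^{2^\ast-2}b+ab^{2^\ast-2}$ (with the obvious analogue when $2^\ast-1<2$) by integrating separately over the bubble core $\{|x|\le\sqrt\lambda\}$, where $B_\lambda\gg u_0$, and its complement; in every dimension this contribution turns out to be dominated by one of $I_1$ or $I_3$. When $u_0\equiv 0$, $I_2$ vanishes identically, which is precisely what unlocks the two sharper rates in the statement.

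Collecting the three estimates and plugging in $\lambda=\lambda_\varepsilon(t)$ from \eqref{delta}, whose value is tuned so that $\|B_{\lambda_\varepsilon(t)}\|_{L^{q_\varepsilon}}$ matches the target rate in each of the six subcases, yields the claim. Uniformity in $t\in[a,b]$ is immediate from the explicit form of $\lambda_\varepsilon(t)$, which depends on $t$ only through a bounded positive factor on $[a,b]$. The main difficulty will be the careful bookkeeping at the borderline dimensions $N=4$ and $N=6$, where logarithmic corrections appear, and in distinguishing the $u_0\equiv 0$ regimes from the generic case.
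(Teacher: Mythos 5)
Your decomposition is genuinely different from the paper's. The paper splits the residual as
$I_1 = f_\e(u_0+B_\lambda)-f_\e(u_0)-f_\e(B_\lambda)$, $I_2 = f_\e(u_0)+\Delta u_0-u_0$, $I_3 = f_\e(B_\lambda)+(\Delta-\mathrm{Id})B_\lambda$,
so that the $\e$--perturbation is distributed across all three pieces, whereas you factor it out first and then separate the critical interaction from the critical bubble residual. Both routes are valid and give the same rates; yours has the cosmetic advantage of isolating the single Taylor expansion $v^{-\e}=1-\e\ln v+O(\e^2\ln^2 v)$ in one place and of making the remark ``$I_2\equiv 0$ when $u_0\equiv 0$'' completely transparent, at the cost of having to run the cutoff commutator argument yourself on $I_3$ (the paper gets a very similar set of cutoff error terms inside its $I_3$). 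Two small points to tighten: first, your phrase that $I_2$ is ``dominated by one of $I_1$ or $I_3$'' is slightly too strong — for $N\ge 5$ and $u_0\not\equiv 0$ the interaction term is in fact \emph{of the same order} as the bubble residual ($\lambda^{(N-2)/2}$ for $N=5$, $\lambda^2|\ln\lambda|^{2/3}$ for $N=6$, $\lambda^2$ for $N\ge 7$), which is fine for the stated bound but should be said accurately; second, the reduction from $\|\cdot\|_{\alpha_\e}$ to a single $L^{q_\e}$ bound needs the elliptic estimate \eqref{esterreur} with $s=\alpha_\e$ precisely when $\e<0$, and the observation $\tfrac{N\alpha_\e}{N+2\alpha_\e}=\tfrac{2N}{N+2}+O(|\e|)$, which you should make explicit rather than just saying ``$q_\e\to 2N/(N+2)$''. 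Aside from this bookkeeping, the proposal is correct.
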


\begin{proof} 
Let $0<a <b$ be real numbers. By \eqref{delta}, unless $N=4$ and $u_0 \equiv 0$, $\lambda_\e(t)$ converges uniformly to zero for $t \in [a,b]$ as $\e \to 0$. If $N=4$ and $u_0 \equiv 0$, again by \eqref{delta}, $\lambda_\e(t)$ converges uniformly to zero for $t \in [a,+ \infty)$ as $\e \to 0$. 
For simplicity we set $\alpha_\varepsilon=\alpha$ in this proof. We have, by \eqref{esterreur}
\begin{align*}
\left\|i^\ast (f_\varepsilon (u_0+B_{\lambda (t)}))-u_0-B_{\lambda (t) }  \right\|_{\alpha}
&\lesssim \left\| (f_\varepsilon (u_0+B_{\lambda (t)}))-(-\Delta +Id)(u_0+B_{\lambda (t) })\right\|_{L^\frac{N \alpha }{N+2\alpha}(B_R) .}
\end{align*}
The triangle inequality yields
\begin{align*} 
\left\|i^\ast (f_\varepsilon (u_0+B_{\lambda (t)}))-u_0-B_{\lambda (t) }  \right\|_{\alpha}
&\lesssim  \left\|f_\varepsilon (u_0+B_{\lambda (t)})-f_\varepsilon (u_0)- f_\varepsilon (B_{\lambda (t)})\right\|_{L^\frac{N\alpha}{N+2\alpha}(B_R)}\nonumber\\
&\quad +   \left\|f_\varepsilon (u_0)+\Delta u_0- u_0\right\|_{L^\frac{N\alpha}{N+2\alpha}(B_R)}\nonumber\\
&\quad +    \left\|f_\varepsilon (B_{\lambda (t)})+ (\Delta - Id)(B_{\lambda (t)})\right\|_{L^\frac{N\alpha}{N+2\alpha}(B_R)}\nonumber\\
&\lesssim  (I_1 +I_2+I_3).
\end{align*}
We begin by estimating $I_1$. Using once more the triangle inequality, we get
\begin{align}
I_1& \lesssim  \left\|(f_\varepsilon (u_0+B_{\lambda (t)})-f_\varepsilon (B_{\lambda (t)}) ) 1_{B_ {\sqrt{\lambda (t)}}} \right\|_{L^\frac{N\alpha}{N+2\alpha}(B_R)}\nonumber\\
&\quad +\left\|(f_\varepsilon (u_0+B_{\lambda (t)})-f_\varepsilon (u_0))1_{B_R\backslash B_{\sqrt{\lambda (t)}}}\right\|_{L^\frac{N\alpha}{N+2\alpha}(B_R)}\nonumber\\
&\quad +\left\|f_\varepsilon (B_{\lambda (t)})1_{B_R\backslash B_{\sqrt{\lambda (t)}}} \right\|_{L^\frac{N\alpha}{N+2\alpha}} +   \left\|f_\varepsilon (u_0)1_{ B_{\sqrt{\lambda (t)}}}  \right\|_{L^\frac{N\alpha}{N+2\alpha}(B_R)}\nonumber\\
&\lesssim \left\|u_0 B_{\lambda (t)}^{2^\ast -2-\varepsilon} 1_{B_{\sqrt{\lambda (t)}}} \right\|_{L^\frac{N\alpha}{N+2\alpha}(B_R)}
+\left\|u_0^{2^\ast-2-\varepsilon}B_{\lambda (t)}1_{B_R \backslash B_{\sqrt{\lambda (t)}}}\right\|_{L^\frac{N\alpha}{N+2\alpha}(B_R)} 
 \nonumber\\
&\quad +\left\|f_\varepsilon (B_{\lambda (t)})1_{B_R\backslash B_{\sqrt{\lambda (t)}}} \right\|_{L^\frac{N\alpha}{N+2\alpha}(B_R)} +   \left\|f_\varepsilon (u_0)1_{ B_{\sqrt{\lambda (t)}}}  \right\|_{L^\frac{N\alpha}{N+2\alpha}(B_R)}.
\end{align}
%\end{eqnarray*}
A first remark is that, by definition of $\lambda(t) = \lambda_\e(t)$ in \eqref{delta}, we have
\begin{equation} \label{controledelta}
\lambda_\e(t)^{\e} = 1 + o(1)\quad  \text{ uniformly for } t \in [a,b]  
\end{equation}
and 
\begin{equation} \label{controledelta2}
\e |\ln \lambda(t)| \lesssim |\e| \ln \frac{1}{|\e|} \quad  \text{ uniformly for } t \in [a,b] . 
\end{equation}
By definition of $\alpha = \alpha_\e$, we have $\frac{N\alpha}{N+2 \alpha} = \frac{2N}{N+2} + O(|\e|)$ as $\e \to 0$. Since $B_{\lambda_\e(t)} \lesssim \lambda(t)^{- \frac{N-2}{2}}$ in $\overline{B_R}$ we can compute, using \eqref{controledelta} and \eqref{controledelta2}:
$$ \begin{aligned}
 \left\|u_0 B_{\lambda (t)}^{2^\ast -2-\varepsilon} 1_{B_{\sqrt{\lambda (t)}}} \right\|_{L^\frac{N\alpha}{N+2\alpha}(B_R)} &\lesssim  \left\|u_0 B_{\lambda (t)}^{2^\ast -2} 1_{B_{\sqrt{\lambda (t)}}} \right\|_{L^\frac{2N}{N+2}(B_R)} \\
 &\lesssim \left \{ \begin{aligned} & \lambda(t)^{\frac{N-2}{2}} & \text{ if } N=3,4,5, \\ & \lambda(t)^{2} |\ln \lambda(t)|^{\frac{2}{3}} & \text{ if } N=6, \\ & \lambda(t)^{2} & \text{ if } N\ge 7. \end{aligned} \right. 
 \end{aligned}
$$
Similar computations using \eqref{controledelta} and \eqref{controledelta2} show that
$$ \begin{aligned}
I_1 \lesssim \lambda(t)^{\frac{N+2}{4}} +  \left \{ 
\begin{aligned} 
& \lambda(t)^{\frac{N-2}{2}} & \text{ if } N=3,4,5, \\ 
& \lambda(t)^{2} |\ln \lambda(t)|^{\frac{2}{3}} & \text{ if } N=6, \\ 
& \lambda(t)^{2} & \text{ if } N\ge 7. 
\end{aligned} \right. 
\end{aligned} 
$$
Let us turn to $I_2$. From Taylor's expansion, we see that
$$I_2=\left\|f_\varepsilon (u_0)-f_0(u_0) \right\|_{L^\frac{N\alpha}{N+2\alpha}(B_R)}\lesssim |\varepsilon|.$$
Next we estimate $I_3$. We have that
\begin{align*}
I_3 &\lesssim \left\|\chi^{2^\ast-1-\varepsilon}(B_{\lambda (t)}^{2^\ast-1-\varepsilon}-B_{\lambda (t)}^{2^\ast-1} )\right\|_{L^\frac{N\alpha}{N+2\alpha}(B_{R} )}\\
&\quad +\left\|(\chi^{2^\ast-1-\varepsilon}-\chi^{2^\ast-1})B^{2^\ast -1}_{\lambda (t) }  \right\|_{L^\frac{N\alpha}{N+2\alpha}(B_R)}\\
&\quad + \left\|f_0 (B_{\lambda (t)})-(-\Delta +Id)(B_{\lambda (t)}) \right\|_{L^\frac{N\alpha}{N+2\alpha}(B_R)}.
\end{align*}
Using Taylor's expansions, \eqref{controledelta} and \eqref{controledelta2} we find
\[\left\|\chi^{2^\ast-1-\varepsilon}(B_{\lambda (t)}^{2^\ast-1-\varepsilon}-B_{\lambda(t)}^{2^\ast-1} )\right\|_{L^\frac{N\alpha}{N+2\alpha}(B_R)}\lesssim |\varepsilon| \ln \frac{1}{|\e|} ,\]
and
\[\left\|(\chi^{2^\ast-1-\varepsilon}-\chi^{2^\ast-1})B_{\lambda (t)}^{2^\ast -1}  \right\|_{L^\frac{N\alpha}{N+2\alpha}(B_R)}\lesssim \lambda(t)^{\frac{N+2}{2}} ,\]
where it is intended that all these expansions are uniform for $t \in [a,b]$. Finally, for the last term in the estimation of $I_3$ we have, using again \eqref{controledelta} and \eqref{controledelta2}:
\begin{align*}
& \left\|f_0 (B_{\lambda (t)})-(-\Delta +Id)(B_{\lambda (t)}) \right\|_{L^\frac{N\alpha}{N+2\alpha} (B_R)}\\
&\lesssim \|B_{\lambda (t)} \Delta \chi\|_{L^\frac{N\alpha}{N+2\alpha}(B_R)} +\|\nabla \chi \nabla B_{\lambda(t)} \|_{L^\frac{N\alpha}{N+2\alpha}(B_R)} +\|\chi B_{\lambda (t)}\|_{L^\frac{N\alpha}{N+2\alpha}(B_R)}\\
& \lesssim  \lambda(t)^{\frac{N-2}{2}} + \left \{ \begin{aligned} & \lambda(t)^{\frac{N-2}{2}} & \text{ if } N=3,4,5, \\ & \lambda(t)^{2} |\ln \lambda(t)|^{\frac{2}{3}} & \text{ if } N=6, \\ & \lambda(t)^{2} & \text{ if } N\ge 7. \end{aligned} \right.
\end{align*}
Combining the previous estimates shows that 
\begin{equation} \label{errfinale}\begin{aligned}
 \Vert i^\ast (f_\varepsilon (u_0+B_{\lambda (t)}))-u_0-B_{\lambda (t) }  \Vert_{\alpha} & \lesssim  \lambda(t)^{\frac{N+2}{4}} + |\e|  \ln \frac{1}{|\e|}  \\
& \quad +  \lambda(t)^{\frac{N-2}{2}} + \left \{ \begin{aligned} & \lambda(t)^{\frac{N-2}{2}} & \text{ if } N=3,4,5 \\ & \lambda(t)^{2} |\ln \lambda(t)|^{\frac{2}{3}} & \text{ if } N=6 \\ & \lambda(t)^{2} & \text{ if } N\ge 7 \end{aligned} \right.
\end{aligned} 
\end{equation}
It is now easily seen with \eqref{delta} that \eqref{errfinale} concludes the proof of Lemma \ref{reste}. 
\end{proof} 

The general resolution of \eqref{eq2} up to kernel elements is given by the following Proposition: 
\begin{prop}
\label{prop4.1}
Let $u_0\in C^{2,\theta}(B_R)$ be a nondegenerate radial solution of \eqref{eq} with $\e=0$. Let $a<b$ be two real numbers. There exists a positive constant $C_{a,b}$ depending only on $a$ and $b$ such that for $|\varepsilon|$ small enough and for all $t \in [a,b]$ there exists a unique radial function $\phi(t) = \phi_\varepsilon(t) \in K_{\lambda(t)}^\bot$ solving equation \eqref{eq2} and satisfying
\begin{equation}
\label{eqprop4.1}
\begin{aligned}
\left\|\phi (t)\right\|_{\alpha_\varepsilon} 
\leq C_{a,b} \left\|i^\ast (f_\varepsilon (u_0+B_{\lambda (t)}))-u_0-B_{\lambda (t) }\right\|_{\alpha_\varepsilon} 
\end{aligned}
\end{equation}
Moreover, $t \in [a,b]  \mapsto \phi(t) \in H^1(B_R)$ is continuously differentiable. 
\end{prop}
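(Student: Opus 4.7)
The plan is to rewrite \eqref{eq2} as a fixed point problem on the Banach space $K_{\lambda(t)}^\bot \cap \mathcal H_\e$ (equipped with $\|\cdot\|_{\alpha_\e}$) and then apply a standard contraction mapping argument. Set
\[
W_{\lambda(t)} := u_0 + B_{\lambda(t)}, \qquad L_{\e,t}(\phi) := \phi - \Pi_{\lambda(t)}^\bot i^\ast\!\bigl(f_\e'(W_{\lambda(t)})\,\phi\bigr),
\]
\[
R_{\e,t} := \Pi_{\lambda(t)}^\bot\!\bigl(i^\ast(f_\e(W_{\lambda(t)})) - W_{\lambda(t)}\bigr),\qquad
N_{\e,t}(\phi):= \Pi_{\lambda(t)}^\bot i^\ast\!\bigl(f_\e(W_{\lambda(t)}+\phi)-f_\e(W_{\lambda(t)})-f_\e'(W_{\lambda(t)})\phi\bigr).
\]
Equation \eqref{eq2} is then equivalent to $L_{\e,t}(\phi)=R_{\e,t}+N_{\e,t}(\phi)$ in $K_{\lambda(t)}^\bot$.

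The heart of the argument is to prove that $L_{\e,t}$ is uniformly invertible on $K_{\lambda(t)}^\bot\cap \mathcal H_\e$: there exists $C_{a,b}>0$ such that $\|L_{\e,t}(\phi)\|_{\alpha_\e} \ge C_{a,b}^{-1}\|\phi\|_{\alpha_\e}$ for all $t\in[a,b]$, all $\phi\in K_{\lambda(t)}^\bot$ and all $|\e|$ small. I would proceed by contradiction: assuming a sequence $(t_k,\e_k,\phi_k)$ with $\|\phi_k\|_{\alpha_{\e_k}}=1$ and $L_{\e_k,t_k}(\phi_k)\to 0$, I would rescale $\phi_k$ around the bubble at scale $\lambda(t_k)$ and also study it away from the origin. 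After extracting, the rescaled limit solves the linearized critical equation $-\Delta v=(2^\ast-1)B_0^{2^\ast-2}v$ on $\R^N$, hence is a linear combination of the $V_i$; radiality and the orthogonality with $Z_{\lambda(t_k),0}$ pass to the limit and force $v\equiv 0$. The non-rescaled limit solves the linearization of \eqref{eq} around $u_0$ with Neumann boundary condition and, by the nondegeneracy assumption \eqref{nondege}, must also vanish. Combining these with the pointwise bounds provided by the representation formula for $i^\ast$ (exactly as in Section \ref{apriori}, estimate \eqref{C07}) contradicts $\|\phi_k\|_{\alpha_{\e_k}}=1$. The asymptotically supercritical case $\e<0$ is what forces the use of the norm $\|\cdot\|_{\alpha_\e}$ rather than the mere $H^1$ norm, since only this norm is preserved under the natural scaling; this is also where the restriction on the range of $t$ (via \eqref{delta}) and the uniform control \eqref{controledelta} of $\lambda(t)^\e$ enter.

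Once uniform invertibility is established, \eqref{eq2} is equivalent to the fixed point equation
\[
\phi = T_{\e,t}(\phi) := L_{\e,t}^{-1}\bigl(R_{\e,t}+N_{\e,t}(\phi)\bigr).
\]
Lemma \ref{reste} bounds $\|R_{\e,t}\|_{\alpha_\e}$ by the quantity appearing in the right-hand side of \eqref{eqprop4.1}, uniformly in $t\in[a,b]$. For the nonlinear term, the elementary pointwise inequality $|f_\e(a+b)-f_\e(a)-f_\e'(a)b|\lesssim |b|^{\min(2,\,2^\ast-1-\e)} + |b|^{2^\ast-1-\e}$ combined with \eqref{esterreur} yields, for $\|\phi\|_{\alpha_\e},\|\phi'\|_{\alpha_\e}\le 1$,
\[
\|N_{\e,t}(\phi)\|_{\alpha_\e}\lesssim \|\phi\|_{\alpha_\e}^{\min(2,2^\ast-1-\e)},\qquad
\|N_{\e,t}(\phi)-N_{\e,t}(\phi')\|_{\alpha_\e}\lesssim (\|\phi\|_{\alpha_\e}+\|\phi'\|_{\alpha_\e})^{\sigma}\|\phi-\phi'\|_{\alpha_\e}
\]
for some $\sigma>0$. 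Hence $T_{\e,t}$ is a contraction on the ball of radius $2 C_{a,b}\|R_{\e,t}\|_{\alpha_\e}$ in $K_{\lambda(t)}^\bot\cap\mathcal H_\e$, and the Banach fixed point theorem provides a unique $\phi_\e(t)$ satisfying \eqref{eqprop4.1}. Finally, $C^1$ dependence of $\phi_\e(t)$ on $t$ follows from the implicit function theorem applied to the $C^1$ map $(t,\phi)\mapsto \phi - T_{\e,t}(\phi)$, whose differential at $\phi_\e(t)$ is $\mathrm{Id}-dN_{\e,t}$, invertible for $|\e|$ small by the contraction estimate above.

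The main obstacle is the uniform invertibility of $L_{\e,t}$ on $K_{\lambda(t)}^\bot$ in the $\|\cdot\|_{\alpha_\e}$ norm, rather than merely in $H^1$, because the two potential limit profiles (the bubble at scale $\lambda(t)$ and the regular part $u_0$) live at very different scales and the orthogonality condition in $K_{\lambda(t)}^\bot$ only kills the bubble's radial kernel direction $V_0$; the nondegeneracy of $u_0$ is precisely what is needed to rule out the other limit. This blow-up/rescaling argument is routine in the subcritical regime but, in the supercritical one $\e<0$, requires working throughout with the combined norm $\|\cdot\|_{\alpha_\e}$ and the sharp pointwise decay of bubbles, exactly in the spirit of Proposition \ref{theorieC0}.
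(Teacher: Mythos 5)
The paper does not include its own proof of this proposition; it declares the argument classical and cites \cite{RobertVetois3, RobertVetois, ReyWei}. Your proposal reproduces precisely the classical Lyapunov--Schmidt scheme found in those references: decompose \eqref{eq2} into linear part $L_{\e,t}$, error $R_{\e,t}$ and nonlinear remainder $N_{\e,t}$, prove uniform coercivity of $L_{\e,t}$ on $K_{\lambda(t)}^\bot$ by a blow-up/contradiction argument that uses both the radial nondegeneracy of $B_0$ (so only the $V_0$-direction survives and is killed by orthogonality) and the nondegeneracy of $u_0$ (which kills the non-rescaled limit), then close with the Banach fixed point theorem, with the implicit function theorem yielding $C^1$-dependence on $t$. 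You also correctly flag the two points where the present setting requires more care than the purely subcritical literature: the use of the combined norm $\|\cdot\|_{\alpha_\e}$ rather than $H^1$ to cover $\e<0$, and the uniform control \eqref{controledelta} of $\lambda(t)^\e$ over $t\in[a,b]$. So the approach is correct and coincides with the one the paper intends.

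One small imprecision: your pointwise Taylor inequality for the nonlinear remainder should, when $2^\ast-1-\e>2$ (that is, for $N\le 5$), read
$\big|f_\e(W+\phi)-f_\e(W)-f_\e'(W)\phi\big|\lesssim |W|^{2^\ast-3-\e}|\phi|^2+|\phi|^{2^\ast-1-\e}$
rather than $|\phi|^{\min(2,2^\ast-1-\e)}+|\phi|^{2^\ast-1-\e}$, since $W=u_0+B_{\lambda(t)}$ is unbounded near the origin and the factor $|W|^{2^\ast-3-\e}$ cannot be dropped pointwise. This does not affect the integrated estimate $\|N_{\e,t}(\phi)\|_{\alpha_\e}\lesssim\|\phi\|_{\alpha_\e}^{\min(2,2^\ast-1-\e)}$ you invoke, because the weight $|W|^{2^\ast-3-\e}$ lands in the appropriate Lebesgue class after H\"older and \eqref{esterreur}, so the contraction argument goes through as stated.
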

The proof is very classical and we omit it. We refer for instance to \cite{RobertVetois3, RobertVetois} for the same kind of result (see also \cite{ReyWei}).

\subsection{The reduced problem.}
For $\varepsilon$ small enough, we define the energy in $H^1(B_R)$ associated to \eqref{eq} by
$$J_\varepsilon (u)=\dfrac{1}{2}\int_{B_R} (|\nabla u|^2 + u^2 )dx - \int_{B_R} F_\varepsilon(u)dx,$$
where $F_\varepsilon(s)= \frac{1}{2^*-\e} s_+^{2^*-\e}$. We set 
\begin{equation} \label{energiereduite}
I_\varepsilon(t)=J_\varepsilon \big(u_0+B_{\lambda(t)}+\phi(t) \big), \quad t\in [a,b],
\end{equation} 
where $\phi (t)\in K_{\lambda (t)}^\bot$ is the function defined in Proposition \ref{prop4.1}. In the next proposition, we give an expansion of $I_\varepsilon$ as $\varepsilon\to 0$.

\begin{prop}
\label{energiered}
Let $u_0\in C^{2,\theta}(B_R)$, $\theta\in (0,1)$ be a nondegenerate radial solution of \eqref{eq} with $\e=0$.  For $N \ge 3$ we let 
\begin{equation} \label{defc5}
c_5(N,u_0)=\dfrac{1}{N}K_N^{-N} \left( \dfrac{2(N-1)}{(N-2)(N-4)}1_{N\geq 6}- \dfrac{2^{N} u_0(0)\omega_{N-1}}{(N (N-2))^{\frac{N-2}{4}} \omega_N} 1_{N\leq 6}\right),
\end{equation}
where $\omega_N$ stands for the volume of $\mathbb{S}^N$ and $K_N$ is as in \eqref{KNN}.We consider $I_\e$ defined in \eqref{energiereduite}.

\begin{itemize}
\item  Assume that $N=4$ and $u_0 \equiv 0$. There exist numerical  constants $\hat{c}_i, 1 \le i \le 5$, with $\hat{c}_5 >0$, such that, as $\e \to 0$,
\begin{equation} \label{fin1}
I_\varepsilon(t)=\hat{c}_1+\hat{c}_2\varepsilon +\hat{c}_3\varepsilon \ln \frac{1}{ |\varepsilon|} +\hat{c}_4 \e \ln \ln \frac{1}{|\e|} + \hat{c}_5\big( \e\ln \frac{1}{t} + |\e|\frac32 t^2 \big)  +o(\e). 
\end{equation}

\item In all the other cases: there exist constants $c_i(N,u_0)$, $i=1,2,5$ depending on $N$ and $u_0$ and $c_i(N)$, $i=3,4$, depending only $N$ such that, as $\e \to 0$:
\begin{equation}
\label{fin3}
\begin{aligned}
I_\varepsilon(t)& =c_1(N,u_0)+c_2(N,u_0)\varepsilon+c_3(N) \varepsilon \ln \frac{1}{|\varepsilon|} +c_4(N) \varepsilon \ln \frac{1}{t}+c_5(N,u_0) |\varepsilon| t +o(\varepsilon),
\end{aligned}
\end{equation}
and the constant $c_4(N)$ is positive in these cases. 
\end{itemize}
All these expansions are uniform with respect to $t\in [a,b]$. 
\end{prop}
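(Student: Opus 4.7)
The plan is to first reduce the problem to expanding $J_\e(u_0+B_{\lambda(t)})$ without the correction $\phi(t)$, and then to carefully compute each integral as $\e\to 0$ with $\lambda=\lambda_\e(t)$ as prescribed by \eqref{delta}.

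\textbf{Step 1: reduction to $\phi=0$.} Setting $R_\e(t)=i^\ast(f_\e(u_0+B_{\lambda(t)}))-u_0-B_{\lambda(t)}$, Proposition \ref{prop4.1} and Lemma \ref{reste} give $\|\phi(t)\|_{\alpha_\e}=O(\|R_\e(t)\|_{\alpha_\e})$, and in all the cases considered $\|R_\e(t)\|_{\alpha_\e}^2=o(\e)$ uniformly in $t\in[a,b]$. A second-order Taylor expansion of $J_\e$ around $u_0+B_{\lambda(t)}$, combined with the bound $|\langle (u_0+B_\lambda)-i^\ast(f_\e(u_0+B_\lambda)),\phi\rangle|\le \|R_\e\|\,\|\phi\|=O(\|R_\e\|^2)$, then yields $I_\e(t)=J_\e(u_0+B_{\lambda(t)})+o(\e)$ uniformly in $t$.

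\textbf{Step 2: expansion of the quadratic part.} Writing $J_\e(u_0+B_\lambda)=\tfrac12\|u_0+B_\lambda\|^2-\int_{B_R}F_\e(u_0+B_\lambda)\,dx$ and integrating by parts (using that $B_\lambda$ is compactly supported in $B_{R/2}$ and that $u_0$ solves $-\Delta u_0+u_0=u_0^{2^\ast-1}$ when $u_0\not\equiv 0$), we obtain
\begin{equation*}
\tfrac12\|u_0+B_\lambda\|^2 = \tfrac12\|u_0\|^2 + \int_{B_R} u_0^{2^\ast-1}B_\lambda\,dx + \tfrac12\|B_\lambda\|^2.
\end{equation*}
The cross term $\int u_0^{2^\ast-1}B_\lambda$ is evaluated by the change of variables $x=\lambda y$ and a Taylor expansion of $u_0$ at $0$: it produces a leading contribution proportional to $u_0(0)\omega_{N-1}\lambda^{(N-2)/2}\int_0^{\infty}B_0^{2^\ast-1}(s)s^{N-1}\,ds$, which is the source of the $u_0(0)$-part of $c_5$ in dimensions $N\le 6$. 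The norm $\|B_\lambda\|^2$ splits as $\int|\nabla B_\lambda|^2=K_N^{-N}+O(\lambda^{N-2})$ (from scale invariance and the cutoff error) plus $\int B_\lambda^2$, which is of order $\lambda$ for $N=3$, $\lambda^2\log(1/\lambda)$ for $N=4$, and $\lambda^2\int_{\R^N}B_0^2$ for $N\ge 5$; in this last case we use the explicit value $\int B_0^2=\tfrac{2(N-1)}{(N-2)(N-4)}\omega_{N-1}(N(N-2))^{(N-2)/2}\cdot\text{const}$ to identify the $c_5$ contribution for $N\ge 6$.

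\textbf{Step 3: expansion of the nonlinear part.} We split $B_R=\Omega_1\cup\Omega_2$ with $\Omega_1=B_{\sqrt{\lambda}}$. On $\Omega_1$ we Taylor-expand $F_\e(u_0+B_\lambda)$ around $B_\lambda$, and on $\Omega_2$ around $u_0$. The dominant contribution is $\int F_\e(B_\lambda)\,dx$, computed by scaling and Taylor-expansion in $\e$:
\begin{equation*}
\int_{B_R} F_\e(B_\lambda)\,dx = \tfrac{1}{2^\ast}K_N^{-N} + \e\,\tfrac{(N-2)^2}{4N}K_N^{-N}\log(1/\lambda) + \e\,\mathfrak{C}+o(\e),
\end{equation*}
for a constant $\mathfrak{C}$ independent of $(t,\e)$. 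The cross term $\int f_\e(u_0)B_\lambda$ cancels the $u_0^{2^\ast-1}B_\lambda$ term from Step~2 up to $o(\e)$, while the remaining contributions from Taylor remainders are negligible. Substituting $\lambda=\lambda_\e(t)$, the term $\log(1/\lambda)$ decomposes as $A_N\log(1/|\e|)+B_N\log(1/t)+O(1)$, which yields the $c_3(N)\e\log(1/|\e|)$ and $c_4(N)\e\log(1/t)$ contributions with $c_4(N)=B_N\cdot\tfrac{(N-2)^2}{4N}K_N^{-N}>0$; the $|\e|t$ contributions from $\int u_0^{2^\ast-1}B_\lambda$ (in dimensions $N\le 6$) and from $\tfrac12\int B_\lambda^2$ (in dimensions $N\ge 6$) combine to give precisely \eqref{defc5}. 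The case $N=4$, $u_0\equiv 0$ is treated separately: the $\log(1/\lambda)$ factor in $\int B_\lambda^2$ interacts with $\lambda=(|\e|/\log(1/|\e|))^{1/2}t$, producing the extra $\hat c_4\e\log\log(1/|\e|)$ term and the different $|\e|\log(1/|\e|)t^2$ shape in \eqref{fin1}.

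\textbf{Main obstacles.} The bookkeeping of constants is the delicate point. To obtain the exact expression \eqref{defc5}, we must relate $\int_0^\infty B_0^{2^\ast-1}(s)s^{N-1}\,ds$ and $\int B_0^2$ to $K_N^{-N}$ and $\omega_N$ through the Pohozaev and Emden--Fowler identities satisfied by $B_0$, and the borderline dimension $N=6$, where both contributions to $c_5$ are genuinely of order $|\e|t$, requires particular care in matching the coefficients coming from $\int B_\lambda^2$ and from the cross term.
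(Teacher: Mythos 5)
Your overall strategy is the same as the paper's: reduce $I_\e(t)$ to $J_\e(u_0+B_{\lambda(t)})$ up to $o(\e)$ via Proposition \ref{prop4.1} and Lemma \ref{reste}, split $J_\e(u_0+B_{\lambda(t)})$ into a main part, a self-energy of $B_\lambda$, cross terms and a remainder, and estimate each piece with the scalings \eqref{delta}. However, there is a concrete confusion in Steps 2--3 about which cross term produces the $u_0(0)$-piece of $c_5$, and this confusion creates an internal contradiction.

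In Step 2 you write $\tfrac12\|u_0+B_\lambda\|^2=\tfrac12\|u_0\|^2+\int u_0^{2^*-1}B_\lambda+\tfrac12\|B_\lambda\|^2$ (correct, since $u_0$ solves \eqref{eq} at $\e=0$), and then assert that the cross term $\int u_0^{2^*-1}B_\lambda$ is ``the source of the $u_0(0)$-part of $c_5$'' and equals, to leading order, $u_0(0)\,\omega_{N-1}\lambda^{(N-2)/2}\int_0^\infty B_0^{2^*-1}(s)s^{N-1}\,ds$. This is not the right attribution: by a change of variables $x=\lambda y$ the term $\int u_0^{2^*-1}B_\lambda$ scales like $u_0^{2^*-1}(0)\,\lambda^{(N-2)/2}\int_{|y|\lesssim 1/\lambda}B_0(y)\,dy$, so it is proportional to $u_0^{2^*-1}(0)$ (not $u_0(0)$) and carries the \emph{non-integrable} weight $\int B_0$, not $\int B_0^{2^*-1}$. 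Moreover, as you correctly observe in Step 3, this term cancels with $\int f_\e(u_0)B_\lambda$ up to $o(\e)$, so it cannot at the same time contribute the order-$|\e|t$ piece of $c_5$. Step 3 then re-asserts that the ``$|\e|t$ contributions from $\int u_0^{2^*-1}B_\lambda$'' combine to give \eqref{defc5}; this directly contradicts the cancellation claim two lines earlier. The term that actually produces the $u_0(0)$-part of $c_5$ is the \emph{other} cross term $-\int u_0\, f_\e(B_\lambda)=-\int u_0\,B_\lambda^{2^*-1-\e}$ (the paper's $I_{2,\e,t}$): by scaling and Taylor-expanding $u_0$ at $0$ this equals $-u_0(0)\,\omega_{N-1}\lambda^{(N-2)/2}\int_0^\infty B_0^{2^*-1}(s)s^{N-1}\,ds+O(\lambda^{N/2})$, which is precisely the formula you wrote but attached to the wrong integral. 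In your Step 3 decomposition, $\int u_0\,f_\e(B_\lambda)$ arises from the Taylor expansion of $F_\e(u_0+B_\lambda)$ around $B_\lambda$ on $\Omega_1=B_{\sqrt\lambda}$ and should be isolated and kept, not grouped with the remainder. With this correction (swap the roles of the two cross terms, keep $-\int u_0\,f_\e(B_\lambda)$ and discard $\int u_0^{2^*-1}B_\lambda - \int f_\e(u_0)B_\lambda$ as $o(\e)$), your proof matches the paper's, including the separate treatment of $N=4$, $u_0\equiv0$ where the $\lambda^2\ln(1/\lambda)$ behaviour of $\int B_\lambda^2$ forces the modified scaling and produces the $\hat c_4\,\e\ln\ln(1/|\e|)$ term.
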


\begin{proof}
As before we write $\lambda(t)$ instead of $\lambda_\e(t)$ in this proof, where $\lambda(t)$ is given by \eqref{delta}. Using Lemma \ref{reste} and Proposition \ref{prop4.1}, we have
\begin{align*}
I_\varepsilon (t)-J_\varepsilon (u_0+B_{\lambda (t)})
&=
\left\langle u_0+B_{\lambda (t)}-i^\ast (f_\varepsilon(u_0+B_{\lambda (t)})),\phi (t) \right\rangle+O(\left\|\phi (t)\right\|^2)\\
&= \begin{cases}
O \Big( \frac{|\varepsilon|}{\ln \frac{1}{|\e|}} \Big),& \text{if}\ N=4\ \text{and}\ u_0=0,\\ 
 O \big(|\varepsilon|^{\frac{3}{2}} \big),& \text{if}\ N=5\ \text{and}\ u_0=0,\\
O \big( \varepsilon^2 \ln \frac{1}{|\varepsilon|}^2 \big),& \text{otherwise}, \end{cases} \\
& = o(\e)
\end{align*}
when $\varepsilon\rightarrow 0$, uniformly for $t \in [a,b]$. Now, the proof of the proposition is reduced to estimate $J_\varepsilon (u_0+B_{\lambda (t)})$.
Since $u_0$ is a solution of \eqref{eq}, it follows that
\begin{align*}
J_\varepsilon(u_0+B_{\lambda (t)}) &= \dfrac{1}{N}\int_{B_R} u_0^{2^\ast}dx + \dfrac{\varepsilon}{2^\ast}\int_{B_R} u_0^{2^\ast} (\ln u_0 - \dfrac{1}{2^\ast})dx + I_{1,\varepsilon,t}+I_{2,\varepsilon,t}+I_{3,\varepsilon,t}+O(\varepsilon^2),
\end{align*} 
where 
$$
I_{1,\varepsilon,t}=\dfrac{1}{2}\int_{B_R} \big( |\nabla B_{\lambda (t)}|^2 + B_{\lambda (t)}^2 \big)dx -\int_{B_R} F_\varepsilon(B_{\lambda (t)})dx, \quad 
I_{2,\varepsilon,t}=-\int_{B_R} f_\varepsilon(B_{\lambda (t)}) u_0dx,$$
and
$$I_{3,\varepsilon,t}=-\int_{B_R} (F_\varepsilon(u_0+B_{\lambda (t)})-F_\varepsilon(u_0)-F_\varepsilon(B_{\lambda (t)})-f_\varepsilon(u_0)B_{\lambda (t)}- f_\varepsilon(B_{\lambda (t)})u_0)dx.$$

We begin by the estimate of $I_{3,\e,t}$. Remark that this term vanishes when $u_0 \equiv 0$. Using Taylor expansion and rough estimates, we get
\begin{align*}
|I_{3,\varepsilon,t}| \lesssim {} &  \left\|(F_\varepsilon(u_0+B_{\lambda (t)})-F_\varepsilon(B_{\lambda (t)})-f_\varepsilon(B_{\lambda (t)})u_0)\right\|_{L^1(B_{\sqrt{\lambda (t))}})}\\
&+ \left\|(F_\varepsilon(u_0+B_{\lambda (t)})-F_\varepsilon(u_0)-f_\varepsilon(u_0)B_{\lambda (t)})\right\|_{L^1 (B_R\backslash B_{\sqrt{\lambda(t)}})}\\
&+ \left\|F_\varepsilon(u_0)\right\|_{L^1 (B_{\sqrt{\lambda(t)}})}+\left\|f_\varepsilon(u_0)B_{\lambda (t)} \right\|_{L^1 (B_{\sqrt{\lambda (t)}})}\\
&+\left\|F_\varepsilon(B_{\lambda (t)})\right\|_{L^1 (B_R\backslash B_{\sqrt{\lambda(t)}})}+\left\|u_0 f_\varepsilon(B_{\lambda (t)})\right\|_{L^1 (B_R\backslash B_{\sqrt{\lambda(t)}})}\\
\lesssim {} &  \left\|u_0^2 B_{\lambda (t)}^{2^\ast-2-\varepsilon}\right\|_{L^1 (B_{\sqrt{\lambda(t)}})}+\left\|u_0^{2^\ast -2-\varepsilon}B_{\lambda (t)}^2 \right\|_{L^1 (B_R\backslash B_{\sqrt{\lambda(t)}})}\\
&+\left\|F_\varepsilon(B_{\lambda (t)})\right\|_{L^1 (B_R\backslash B_{\sqrt{\lambda(t)}})}+\left\|u_0 f_\varepsilon(B_{\lambda (t)})\right\|_{L^1 (B_R \backslash B_{\sqrt{\lambda(t)}})}\\
&+ \left\|F_\varepsilon(u_0)\right\|_{L^1(B_{\sqrt{\lambda(t)}})}+\left\|f_\varepsilon(u_0)B_{\lambda (t)}\right\|_{L^1 (B_{\sqrt{\lambda(t)}})}\\
\lesssim {} &  \left\|u_0^2 B_{\lambda (t)}^{2^\ast-2-\varepsilon}\right\|_{L^1 (B_{\sqrt{\lambda(t)}})}+\left\|u_0^{2^\ast -2-\varepsilon}B_{\lambda (t)}^2 \right\|_{L^1 (B_R\backslash B_{\sqrt{\lambda(t)}})}\\
&+ \lambda (t)^{\frac{N}{2}} .
\end{align*}
 Therefore estimating the last two terms and using the definition of $\lambda (t)$ given by \eqref{delta}, we obtain
\begin{equation}
\label{enei1}
\begin{aligned} 
|I_{3,\varepsilon,t}|& \lesssim \left\{\begin{array}{lll}
\lambda (t) \ \mathrm{if}\ N=3 \\
\lambda (t)^2 |\ln \lambda(t)| \ \mathrm{if}\ N=4 \ \mathrm{and} \ u_0 >0\\
\lambda (t)^{\frac{N}{2}}\ \mathrm{if}\ N\ge5 \\
\end{array} \right. \\
& = o(\e) 
\end{aligned}
\end{equation}
uniformly for $t \in [a,b]$. Now, let us estimate $I_{2,\varepsilon,t}$. For any $N \ge 3$ we have, with \eqref{delta} \eqref{controledelta} and \eqref{controledelta2},
\begin{align}
\label{enei2}
 I_{2,\varepsilon,t} = {} & - u_0 (0) \omega_{N-1}(N(N-2) )^{\frac{N-2}{4} (2^\ast -1-\varepsilon)}\lambda (t)^{\frac{N-2}{2}(1+\varepsilon)}\nonumber\\
&  \times\int_0^{\frac{R}{4\lambda(t)}} 
\dfrac{r^{N-1}}{(1+r^2)^{\frac{N+2}{2}-\varepsilon \frac{N-2}{2}}}dr\nonumber +  O(\lambda (t)^{\frac{N+2}{2} - \frac{N-2}{2}\e})\nonumber\\
= {}&  -\dfrac{2^{N}u_0(0) K_N^{-N}\omega_{N-1}\lambda (t)^{\frac{N-2}{2}}}{N^{\frac{N+2}{4}} (N-2)^{\frac{N-2}{4}} \omega_N}+O(\lambda (t)^{\frac{N}{2}}).
\end{align}
The last integral in \eqref{enei2} has been computed by standard considerations, see for instance \cite[equation $(4.6)$]{PremoselliVetois3}.

Finally, let us consider $I_{1,\varepsilon ,t}$. We get
$$\int_{B_R} |\nabla B_{\lambda (t)}|^2 dx = K_N^{-N} +O(\lambda (t)^{N-2}),$$

$$\int_{B_R} B_{\lambda (t)}^2 dx =\begin{cases}O(\lambda (t)) ,& N=3,\\ \frac{3}{2} K_4^{-4}\lambda (t)^2 |\ln \lambda (t)| +O(\lambda (t)^2 ),& N=4,\\  \frac{4(N-1)}{N(N-2)(N-4)} K_N^{-N}\lambda^2 (t)+O(\lambda (t)^{N-2} ),& N\geq 5,  \end{cases}$$
and, with \eqref{controledelta} and \eqref{controledelta2},
$$\int_{B_R} F_\varepsilon (B_{\lambda (t)}) dx = \dfrac{N-2}{2N} K_N^{-N} (\lambda (t)^{\frac{N-2}{2}\varepsilon}+\dfrac{2\beta_N}{N-2}\varepsilon )+O(\lambda (t)^{N - (N-2) \e}) + O(\e^2),$$
where $K_N$ is defined as in \eqref{KNN} and
$$\beta_N = 2^{N-3} (N-2)^2 \dfrac{\omega_{N-1}}{\omega_N} \int_0^\infty \dfrac{r^{\frac{N-2}{2}} \ln (1+r)}{(1+r)^N} dr+\dfrac{(N-2)^2}{4N} (1-2N \ln \sqrt{N(N-2)}) .$$
So in view of the previous estimates and \eqref{delta}, \eqref{controledelta} and \eqref{controledelta2}, we find
\begin{equation}
\label{enei3}
\begin{aligned}
I_{1,\varepsilon,t}& = \dfrac{K_N^{-N}}{N}\Bigg(1-\beta_N \varepsilon-\Big(\dfrac{N-2}{2}\Big)^2\varepsilon \ln \lambda (t) + O(\e^2)\\
&+ \left\{ \begin{aligned} & O(\lambda (t)) & \text{ if }  N=3\\ & 3 \lambda (t)^2 |\ln \lambda (t)| +O(\lambda (t)^2 ) & \text{ if } N=4\\  & \frac{2(N-1)}{(N-2)(N-4)} \lambda^2 (t)+O(\lambda (t)^{N-2} )& \text{ if }  N\geq 5  \end{aligned} \right \} \Bigg)\\
\end{aligned}
\end{equation}
Assume first that $N=3$. Combining \eqref{enei1}, \eqref{enei2} and \eqref{enei3}, we obtain, with \eqref{delta},
\begin{equation} \label{DLN3}
\begin{aligned}
J_\varepsilon(u_0+B_{\lambda (t)}) &=  \frac{1}{3}\int_{B_R} u_0^{6}dx + \dfrac{\varepsilon}{6}\int_{B_R} u_0^{6} (\ln u_0 - \dfrac{1}{6})dx +\dfrac{1}{3}K_3^{-3}\left(1-\beta_3 \varepsilon \right)\\
&  - \frac16 K_3^{-3} \e \ln |\e|
 -\frac{1}{12} K_3^{-3} \varepsilon \ln t+ c_5(3,u_0)|\e| t+o(\varepsilon).
  \end{aligned}
\end{equation}
Assume now that $N \ge 5$. Combining again \eqref{enei1}, \eqref{enei2} and \eqref{enei3}, we obtain, with \eqref{delta}
\begin{equation} \label{DLN5}
\begin{aligned}
 J_\varepsilon(u_0+B_{\lambda (t)}) 
&=  \frac{1}{N}\int_{B_R} u_0^{2^\ast}dx + \dfrac{\varepsilon}{2^\ast}\int_{B_R} u_0^{2^\ast} (\ln u_0 - \dfrac{1}{2^\ast})dx +\dfrac{1}{N}K_N^{-N}\left(1-\beta_N \varepsilon \right) \\
& +\dfrac{1}{N}K_N^{-N} \Bigg( -\Big( \dfrac{N-2}{2} \Big)^2\varepsilon \ln \lambda (t) +   \dfrac{2(N-1)}{(N-2)(N-4)}\lambda (t)^2\\
&- \dfrac{2^{N}u_0(0) \omega_{N-1}\lambda (t)^{\frac{N-2}{2}}}{(N (N-2))^{\frac{N-2}{4}} \omega_N}\Bigg)+o(\varepsilon).
\end{aligned}
\end{equation}
By \eqref{delta} $\lambda(t)$ is a power of $|\e|t$ when $N \ge 5$. As a consequence we have $\ln \lambda(t) = C( \ln |\e| + \ln t)$ for some dimensional constant $C$, $\lambda(t)^2 = o\big( \lambda(t)^{\frac{3}{2}}\big)$ if $N = 5$ and $\lambda(t)^{\frac{N-2}{2}} = o\big(\lambda(t)^2\big)$ if $N \ge 7$ as $\e \to 0$. Proposition \ref{energiered} then follows from \eqref{delta},\eqref{controledelta} and \eqref{controledelta2} when $N \ge 5$. 

We now assume that $N = 4$. Assume first that $u_0 >0$. Then $\lambda(t) = t|\e|$, so that $\lambda(t)^2 \ln \lambda(t) = o\big( \lambda(t) \big)$ as $\e \to 0$. Combining \eqref{enei1}, \eqref{enei2} and \eqref{enei3} with \eqref{delta} we thus get
\begin{equation} \label{DLN41}
\begin{aligned}
 J_\varepsilon(u_0+B_{\lambda (t)}) 
&=  \frac{1}{4}\int_{B_R} u_0^{4}dx + \dfrac{\varepsilon}{4}\int_{B_R} u_0^{4} (\ln u_0 - \dfrac{1}{4})dx +\dfrac{1}{4}K_4^{-4}\left(1-\beta_4 \varepsilon \right) \\
& -\dfrac{1}{4}K_4^{-4} \e \ln |\e| - \dfrac{1}{4}K_4^{-4} \Big(\varepsilon \ln t + c_1(4, u_0) |\e| t\Big)+o(\varepsilon).
\end{aligned}
\end{equation}
Assume finally that $N=4$ and $u_0 \equiv 0$. In this case $I_{2,\e,t}$ and $I_{3,\e,t}$ vanish and \eqref{enei3}, \eqref{delta}, \eqref{controledelta} and \eqref{controledelta2} show that
\begin{equation} \label{DLN42}
\begin{aligned}
I_\varepsilon(t)& = \dfrac{K_4^{-4}}{4}\big(1-\beta_4 \varepsilon\big)  + O(\e^2|\ln \e|^2) +O(\lambda (t)^2 )+\frac{1}{4} K_4^{-4} \Big(\varepsilon+ 3 \lambda (t)^2 \Big) \ln \frac{1}{\lambda (t)}.
\end{aligned}
\end{equation}
Since $\lambda(t) = \big( \frac{|\e|}{\ln \frac{1}{|\e|}}\big)^{\frac12} t$ by \eqref{delta}, we have
\begin{equation*}
\begin{aligned}
\Big( \varepsilon+ 3 \lambda (t)^2 \Big) \ln \frac{1}{\lambda (t)} 
& =  \big( \e + 3 |\e|\frac{t^2}{\ln \frac{1}{|\e|}} \big) \big( \frac12 \ln \frac{1}{|\e|} + \frac12\ln  \ln \frac{1}{|\e|} + \ln \frac{1}{t} \big) \\
& = \hat{c}_3 |\e| \ln \frac{1}{|\e|} + \hat{c}_4 |\e| \ln \ln \frac{1}{|\e|}  +\big( \e\ln \frac{1}{t} + |\e|\frac32 t^2 \big) + o(|\e|)
\end{aligned} 
\end{equation*}
uniformly for $t \in [a,b]$.
This concludes the proof of Proposition \ref{energiered}. 
\end{proof}

\subsection{Proof of Theorems \ref{thinstabi-typeB}, \ref{thinstabi_u0+Bsub} and \ref{thinstabi}.} 
We are now in position to conclude the proof of Theorems \ref{thinstabi-typeB} to  \ref{thinstabi}. We prove the three theorems together. It is a classical result that if, for some $\e \neq 0$, $t_\e >0$ is a critical point of $t \mapsto I_\e(t)$ given by \eqref{energiereduite} then the function $ u_0+B_{\lambda (t_\varepsilon)}+ \phi(t_\varepsilon)$ is a radial solution to \eqref{eq}. We refer for instance to \cite{RobertVetois}. Using Proposition \ref{energiered} we will construct, in each case, a critical point of $t \mapsto I_\e(t)$. In every case we end up with a critical point $u_\e$ of $J_\e$, which is a solution of 
\begin{equation*}
\left \{ \begin{aligned} -\Delta u_\e +u_\e & = (u_\e)_+^{2^\ast -2-\varepsilon} u_\e ,\ \text{in}\ B_R \subset \R^N,\\ \partial_\nu u_\e & =0,\ \text{on}\ \partial B_R. \end{aligned} \right.
\end{equation*}
Maximum principles and standard elliptic theory show that $u_\e >0$ in $\overline{B_R}$, $u_\e \in C^{2,\theta}(\overline{B_R})$ and that $u_\e$ blows up with a single bubble at $0$ as $\e \to 0$ (independently of whether $u_0 \equiv 0$ or $u_0 >0$). Proposition \ref{theorieC0} thus applies: if $i$ denotes the number of zeros of $u_0-1$, it shows that $u_\varepsilon-1$ has $i+1$ zeros if $u_0 (0) <1$, while $u_\e-1$ vanishes $i$-times if $u_0(0) \geq 1$. 

\medbreak

We now construct critical points of $I_\e$. 
We treat first the case $N \neq 4 $ or $u_0 >0$, i.e. type $B$ solutions in dimension $N\ne 4$ or solutions of type $u_0+B$ with $u_0>0$. Let $\mathcal{G}$ be the function defined on $\R^+$ by
$$\mathcal{G}(t)=c_4(N) \ln  \frac{1}{t} + \text{sign}(\varepsilon) c_5 (N,u_0) t , $$
where $c_4(N) >0$ is given by Proposition \ref{energiered}, $c_5(N,u_0)$ is as in \eqref{defc5}. Expansion \eqref{fin3} shows that 
$$\lim_{\varepsilon \rightarrow 0} \dfrac{I_{\varepsilon  }(t) - c_1 (N,u_0) - c_2 (N,u_0) \varepsilon -c_3 (N) \varepsilon \ln \frac{1}{|\varepsilon|} }{\varepsilon}= \mathcal{G} (t), $$
uniformly with respect to $t$ in compact sets of $]0, + \infty[$. It is easily checked that, under the assumptions of Theorem \ref{thinstabi},
$\mathcal{G}$ has a unique global maximum or minimum point $t_0$ in $]0, + \infty[$ that only depends on $N$ and $u_0(0)$. Choose $0 < a <b$ such that $a < t_0 < b$. It is easily checked that $t_0$ is a non-degenerate extremal point of $\mathcal{G}$, hence it is a $C^0$-stable critical point for $\mathcal{G}$ in $[a,b]$, according to the terminology of \cite{Li1997}. For $\e$ small enough there thus exists a family $(t_\varepsilon)_\e$ of critical points of $I_\e$ in $[a,b]$ and $u_\varepsilon = u_0 +B_\lambda (t_\varepsilon )+\phi (t_\varepsilon)$ is a critical point of $J_\e$. We have thus proved Theorem \ref{thinstabi-typeB} when $N\ne 4$ and Theorems \ref{thinstabi_u0+Bsub} and \ref{thinstabi}.

\medbreak

It remains to complete the proof of Theorem \ref{thinstabi-typeB} when $N =  4$, i.e. to show the existence of type $B$ solutions in dimension $N=4$ when $\e >0$. Define, for $t>0$, 
$$\mathcal{H}(t) =  \ln \frac{1}{t} + \frac32 t^2. $$
Expansion \eqref{fin1} shows that
$$\lim_{\varepsilon \to 0} \frac{I_{\varepsilon }(t) -\hat{c}_1-\hat{c}_2\varepsilon -\hat{c}_3\varepsilon \ln \frac{1}{ |\varepsilon|} -\hat{c}_4 \e \ln \ln \frac{1}{|\e|}}{\hat{c}_5 \e} = \mathcal{H} (t) $$
uniformly with respect to $t$ in compact subsets of $]0, + \infty[$. It is easily checked that $\mathcal{H}$ has unique critical point $t_0 = \frac{1}{\sqrt{3}}$ in $]0, + \infty[$, that also happens to be a strict global minimum. If we choose $0 < a <b$ so that $a < t_0 < b$, $t_0$ is now a $C^0$-stable critical point of $\mathcal{H}$. Hence, for $\e$ small enough, there thus exists a family $(t_\varepsilon)_\e$ of critical points of $I_\e$ in $[a,b]$ and $u_\varepsilon = u_0 +B_\lambda (t_\varepsilon )+\phi (t_\varepsilon)$ is a critical point of $J_\e$.

\qed

\begin{rmq}[The dimension $N=3$]  \label{remN3}
We construct solutions of type $B$ in Theorem \ref{thinstabi-typeB} only in the case $N \ge 4$. When $N=3$, one-bubble blowing up solutions with zero weak limit concentrating at an interior point have been constructed in \cite[Theorem 1.2]{ReyWei} when $\e \neq 0$. The examples of \cite{ReyWei} apply to our setting of \eqref{eq}: keeping in mind Remark \ref{remscaling} they yield solutions of type $B$ when $\e >0$ and $R$ is large and when $\e <0$ and $R$ is small (be careful that the sign of $\e$ is reversed in \cite{ReyWei}). More recently, the purely critical case has been considered in \cite{MR3938021} in dimension $N=3$. The examples of \cite{MR3938021} rely however on slight perturbations of $R$ and is therefore of a different nature. 
 \end{rmq}

\begin{rmq}[The dimension $N=6$]  \label{Rem:N=6}
In dimension $N=6$, an extra assumption arises to construct solutions of type $u_0+B$. 
It is therefore natural to ask if solutions with the required assumptions can indeed be used in the constructions we did. The references \cite{Ni83,MR1106295,MR3564729} give a first answer to this question. A careful analysis of the unique solution of 
\begin{equation} \label{contrainteu0-1/2}
 - u'' - \frac{5}{r}u' + u-u^2 = 0 \text{ in } [0, + \infty) , \quad u'(0) = 0, u(0) = a, 
 \end{equation}
 shows the following things.
\begin{enumerate}
\item We can take $u_0=1$ in Theorem \ref{thinstabi} if $1$ is not one of the radial eigenvalue $\mu_i(R)_{i\ge 0}$ of $-\Delta$ on the ball $B_R$ with Neumann boundary condition, i.e. only a discrete set of radii is forbidden.
 \item For every $R>0$ such that $\mu_1(R)>1$ (i.e. $R$ small), there exists $a>1$ such that the solution of \eqref{contrainteu0-1/2} is a decreasing solution of \eqref{eq}, with $N=6$ and $\e=0$, see \cite[Theorem - assertion (b)- page 276]{MR1106295}. These solutions can generically be used in Theorem \ref{thinstabi}. The existence of these decreasing solutions precisely disprove Lin-Ni conjecture in dimension $6$. Our solutions of type $u_0+B$ is an other example of non constant solutions in balls of any size (but the exponent is not fixed). 
 \item For every $R>0$ such that $\mu_1(R)<1$ (i.e. $R$ large), there exists $a<1$ such that the solution of \eqref{contrainteu0-1/2} is an increasing solution of \eqref{eq}, with $N=6$ and $\e=0$, see \cite[Theorem - assertion (a)- page 276]{MR1106295}. These solutions can generically be used in in Theorem \ref{thinstabi} when $u(0)>1/2$ and in Theorem \ref{thinstabi_u0+Bsub} when $u(0)<1/2$. By continuity and monotonicity, the precise range of admissible radii can be expressed in the following way. If $R_0>0$ is such that $\mu_1(R_0)=1$, we can generically use these solutions in Theorem \ref{thinstabi} for $R_0<R<R_1$ where $R_1$ is given in the proof of Theorem \ref{thstabi2etoile}. We can generically use them in Theorem \ref{thinstabi_u0+Bsub} when $R>R_1$. 
 
\item As shown by Ni in \cite{Ni83}, whatever $a\ne 1$, the solution of \eqref{contrainteu0-1/2} can generically be used in Theorem \ref{thinstabi} when $a>1/2$ and in Theorem \ref{thinstabi_u0+Bsub} when $a<1/2$. A precise analysis of the admissible radii would be lengthly. A qualitative analysis is somehow readable in the bifurcation analysis \cite{MR3564729} recalled in Section \ref{motivation}. In fact, with the Remark \ref{remscaling} at hand, a bifurcation analysis can be done exactly as in \cite[Section 4]{MR3564729} yielding solutions at the critical exponent $p=3$ when $R$ varies.This provides many examples of more complex patterns (those are discussed in Section \ref{motivation}).
 
\end{enumerate}
 \end{rmq}

\bibliographystyle{alpha}
\bibliography{biblio2}

\newcommand{\etalchar}[1]{$^{#1}$}
\def \cfac#1{\ifmmode\setbox7\hbox{$\accent"5E#1$}\else
  \setbox7\hbox{\accent"5E#1}\penalty 10000\relax\fi\raise 1\ht7
  \hbox{\lower1.15ex\hbox to 1\wd7{\hss\accent"13\hss}}\penalty 10000
  \hskip-1\wd7\penalty 10000\box7}
\begin{thebibliography}{dPMRW19}

\bibitem[Aub82]{Aub}
Thierry Aubin.
\newblock {\em Nonlinear analysis on manifolds. {M}onge-{A}mp\`ere equations},
  volume 252 of {\em Grundlehren der Mathematischen Wissenschaften [Fundamental
  Principles of Mathematical Sciences]}.
\newblock Springer-Verlag, New York, 1982.

\bibitem[AWZ13]{MR3073256}
Weiwei Ao, Juncheng Wei, and Jing Zeng.
\newblock An optimal bound on the number of interior spike solutions for the
  {L}in-{N}i-{T}akagi problem.
\newblock {\em J. Funct. Anal.}, 265(7):1324--1356, 2013.

\bibitem[AY91]{MR1106295}
Adimurthi and S.~L. Yadava.
\newblock Existence and nonexistence of positive radial solutions of {N}eumann
  problems with critical {S}obolev exponents.
\newblock {\em Arch. Rational Mech. Anal.}, 115(3):275--296, 1991.

\bibitem[AY97]{MR1480241}
Adimurthi and S.~L. Yadava.
\newblock Nonexistence of positive radial solutions of a quasilinear {N}eumann
  problem with a critical {S}obolev exponent.
\newblock {\em Arch. Rational Mech. Anal.}, 139(3):239--253, 1997.

\bibitem[BC14]{MR3215478}
Laurent Bakri and Jean-Baptiste Casteras.
\newblock Non-stability of {P}aneitz-{B}ranson type equations in arbitrary
  dimensions.
\newblock {\em Nonlinear Anal.}, 107:118--133, 2014.

\bibitem[BCF20]{MR4053034}
Denis Bonheure, Jean-Baptiste Casteras, and Juraj Foldes.
\newblock Singular radial solutions for the {K}eller-{S}egel equation in high
  dimension.
\newblock {\em J. Math. Pures Appl. (9)}, 134:204--254, 2020.

\bibitem[BCN17a]{MR3625083}
Denis Bonheure, Jean-Baptiste Casteras, and Benedetta Noris.
\newblock Layered solutions with unbounded mass for the {K}eller-{S}egel
  equation.
\newblock {\em J. Fixed Point Theory Appl.}, 19(1):529--558, 2017.

\bibitem[BCN17b]{MR3641921}
Denis Bonheure, Jean-Baptiste Casteras, and Benedetta Noris.
\newblock Multiple positive solutions of the stationary {K}eller-{S}egel
  system.
\newblock {\em Calc. Var. Partial Differential Equations}, 56(3):Paper No. 74,
  35, 2017.

\bibitem[BCR21]{MR4299895}
Denis Bonheure, Jean-Baptiste Casteras, and Carlos Rom\'{a}n.
\newblock Unbounded mass radial solutions for the {K}eller-{S}egel equation in
  the disk.
\newblock {\em Calc. Var. Partial Differential Equations}, 60(5):Paper No. 198,
  30, 2021.

\bibitem[BE91]{BianchiEgnell}
Gabriele Bianchi and Henrik Egnell.
\newblock A note on the {S}obolev inequality.
\newblock {\em J. Funct. Anal.}, 100(1):18--24, 1991.

\bibitem[BGNT16]{MR3487266}
Denis Bonheure, Massimo Grossi, Benedetta Noris, and Susanna Terracini.
\newblock Multi-layer radial solutions for a supercritical {N}eumann problem.
\newblock {\em J. Differential Equations}, 261(1):455--504, 2016.

\bibitem[BGT16]{MR3564729}
Denis Bonheure, Christopher Grumiau, and Christophe Troestler.
\newblock Multiple radial positive solutions of semilinear elliptic problems
  with {N}eumann boundary conditions.
\newblock {\em Nonlinear Anal.}, 147:236--273, 2016.

\bibitem[BK79]{BrezisKato}
Ha\"{\i}m Br\'{e}zis and Tosio Kato.
\newblock Remarks on the {S}chr\"{o}dinger operator with singular complex
  potentials.
\newblock {\em J. Math. Pures Appl. (9)}, 58(2):137--151, 1979.

\bibitem[BKP91]{MR1103293}
C.~Budd, M.~C. Knaap, and L.~A. Peletier.
\newblock Asymptotic behavior of solutions of elliptic equations with critical
  exponents and {N}eumann boundary conditions.
\newblock {\em Proc. Roy. Soc. Edinburgh Sect. A}, 117(3-4):225--250, 1991.

\bibitem[BNW12]{MR2948289}
Denis Bonheure, Benedetta Noris, and Tobias Weth.
\newblock Increasing radial solutions for {N}eumann problems without growth
  restrictions.
\newblock {\em Ann. Inst. H. Poincar\'{e} C Anal. Non Lin\'{e}aire},
  29(4):573--588, 2012.

\bibitem[BST13]{MR3056708}
Denis Bonheure, Enrico Serra, and Paolo Tilli.
\newblock Radial positive solutions of elliptic systems with {N}eumann boundary
  conditions.
\newblock {\em J. Funct. Anal.}, 265(3):375--398, 2013.

\bibitem[CdPM20]{MR4046015}
Carmen Cort\'{a}zar, Manuel del Pino, and Monica Musso.
\newblock Green's function and infinite-time bubbling in the critical nonlinear
  heat equation.
\newblock {\em J. Eur. Math. Soc. (JEMS)}, 22(1):283--344, 2020.

\bibitem[CF20]{MR4149344}
Jean-Baptiste Casteras and Juraj F\"{o}ldes.
\newblock Singular radial solutions for the {L}in-{N}i-{T}akagi equation.
\newblock {\em Calc. Var. Partial Differential Equations}, 59(5):Paper No. 168,
  20, 2020.

\bibitem[CGMN19]{arxiv.1912.00721}
Charles Collot, Tej-Eddine Ghoul, Nader Masmoudi, and Van~Tien Nguyen.
\newblock Refined description and stability for singular solutions of the 2d
  keller-segel system.
\newblock {\em ar{X}iv.1912.00721}, 2019.

\bibitem[CGS89]{CaGiSp}
Luis~A. Caffarelli, Basilis Gidas, and Joel Spruck.
\newblock Asymptotic symmetry and local behavior of semilinear elliptic
  equations with critical {S}obolev growth.
\newblock {\em Comm. Pure Appl. Math.}, 42(3):271--297, 1989.

\bibitem[DdPD{\etalchar{+}}19]{arxiv.1911.12417}
Juan Davila, Manuel del Pino, Jean Dolbeault, Monica Musso, and Juncheng Wei.
\newblock Infinite time blow-up in the patlak-keller-segel system: existence
  and stability.
\newblock {\em arXiv:1911.12417}, 2019.

\bibitem[DdPW20]{MR4054257}
Juan D\'{a}vila, Manuel del Pino, and Juncheng Wei.
\newblock Singularity formation for the two-dimensional harmonic map flow into
  {$S^2$}.
\newblock {\em Invent. Math.}, 219(2):345--466, 2020.

\bibitem[DHR04]{DruetHebeyRobert}
Olivier Druet, Emmanuel Hebey, and Fr{\'e}d{\'e}ric Robert.
\newblock {\em Blow-up theory for elliptic {PDE}s in {R}iemannian geometry},
  volume~45 of {\em Mathematical Notes}.
\newblock Princeton University Press, Princeton, NJ, 2004.

\bibitem[dPMM14]{MR3262455}
Manuel del Pino, Fethi Mahmoudi, and Monica Musso.
\newblock Bubbling on boundary submanifolds for the {L}in-{N}i-{T}akagi problem
  at higher critical exponents.
\newblock {\em J. Eur. Math. Soc. (JEMS)}, 16(8):1687--1748, 2014.

\bibitem[dPMP05]{MR2114411}
Manuel del Pino, Monica Musso, and Angela Pistoia.
\newblock Super-critical boundary bubbling in a semilinear {N}eumann problem.
\newblock {\em Ann. Inst. H. Poincar\'{e} C Anal. Non Lin\'{e}aire},
  22(1):45--82, 2005.

\bibitem[dPMRW19]{MR3938021}
Manuel del Pino, Monica Musso, Carlos Rom\'{a}n, and Juncheng Wei.
\newblock Interior bubbling solutions for the critical {L}in-{N}i-{T}akagi
  problem in dimension 3.
\newblock {\em J. Anal. Math.}, 137(2):813--843, 2019.

\bibitem[dPMW21a]{MR4307216}
Manuel del Pino, Monica Musso, and Juncheng Wei.
\newblock Existence and stability of infinite time bubble towers in the energy
  critical heat equation.
\newblock {\em Anal. PDE}, 14(5):1557--1598, 2021.

\bibitem[dPMW21b]{MR4157676}
Manuel del Pino, Monica Musso, and Juncheng Wei.
\newblock Geometry driven type {II} higher dimensional blow-up for the critical
  heat equation.
\newblock {\em J. Funct. Anal.}, 280(1):Paper No. 108788, 49, 2021.

\bibitem[dPPV16]{MR3527634}
Manuel del Pino, Angela Pistoia, and Giusi Vaira.
\newblock Large mass boundary condensation patterns in the stationary
  {K}eller-{S}egel system.
\newblock {\em J. Differential Equations}, 261(6):3414--3462, 2016.

\bibitem[Dru02]{Druetdim3}
Olivier Druet.
\newblock Elliptic equations with critical {S}obolev exponents in dimension 3.
\newblock {\em Ann. Inst. H. Poincar\'{e} Anal. Non Lin\'{e}aire},
  19(2):125--142, 2002.

\bibitem[Dru03]{DruetJDG}
Olivier Druet.
\newblock From one bubble to several bubbles: the low-dimensional case.
\newblock {\em J. Differential Geom.}, 63(3):399--473, 2003.

\bibitem[DRW12]{DruetRobertWei}
Olivier Druet, Fr\'{e}d\'{e}ric Robert, and Juncheng Wei.
\newblock The {L}in-{N}i's problem for mean convex domains.
\newblock {\em Mem. Amer. Math. Soc.}, 218(1027):vi+105, 2012.

\bibitem[DS02]{DevillanovaSolimini}
Giuseppe Devillanova and Sergio Solimini.
\newblock Concentration estimates and multiple solutions to elliptic problems
  at critical growth.
\newblock {\em Adv. Differential Equations}, 7(10):1257--1280, 2002.

\bibitem[GMR]{GhoussoubMazumdarRobert}
Nassif Ghoussoub, Saikat Mazumdar, and Fr{\'e}d{\'e}ric Robert.
\newblock Multiplicity and stability of the pohozaev obstruction for
  hardy-schr{\"o}dinger equations with boundary singularity.
\newblock {\em Mem. Amer. Math. Soc.}
\newblock To appear.

\bibitem[Heb14]{HebeyZLAM}
Emmanuel Hebey.
\newblock {\em Compactness and stability for nonlinear elliptic equations}.
\newblock Zurich Lectures in Advanced Mathematics. European Mathematical
  Society (EMS), Z\"urich, 2014.

\bibitem[JL73]{MR340701}
D.~D. Joseph and T.~S. Lundgren.
\newblock Quasilinear {D}irichlet problems driven by positive sources.
\newblock {\em Arch. Rational Mech. Anal.}, 49:241--269, 1972/73.

\bibitem[JM20]{MR4108414}
Jacek Jendrej and Yvan Martel.
\newblock Construction of multi-bubble solutions for the energy-critical wave
  equation in dimension 5.
\newblock {\em J. Math. Pures Appl. (9)}, 139:317--355, 2020.

\bibitem[KL22]{KonigLaurain}
Tobias K{\"o}nig and Paul Laurain.
\newblock Multibubble blow-up analysis for the {B}rezis-{N}irenberg problem in
  three dimensions.
\newblock {\em arXiv:2208.12337}, 2022.

\bibitem[KS70]{KellerSegel}
Evelyn~F. Keller and Lee~A. Segel.
\newblock Initiation of slime mold aggregation viewed as an instability.
\newblock {\em J. Theoret. Biol.}, 26(3):399--415, 1970.

\bibitem[KST09]{MR2494455}
Joachim Krieger, Wilhelm Schlag, and Daniel Tataru.
\newblock Slow blow-up solutions for the {$H^1(\Bbb R^3)$} critical focusing
  semilinear wave equation.
\newblock {\em Duke Math. J.}, 147(1):1--53, 2009.

\bibitem[Li97]{Li1997}
YanYan Li.
\newblock On a singularly perturbed elliptic equation.
\newblock {\em Adv. Differential Equations}, 2(6):955--980, 1997.

\bibitem[Lio82]{MR683027}
Pierre-Louis Lions.
\newblock Sym\'{e}trie et compacit\'{e} dans les espaces de {S}obolev.
\newblock {\em J. Functional Analysis}, 49(3):315--334, 1982.

\bibitem[LN88]{MR974610}
Chang~Shou Lin and Wei-Ming Ni.
\newblock On the diffusion coefficient of a semilinear {N}eumann problem.
\newblock In {\em Calculus of variations and partial differential equations
  ({T}rento, 1986)}, volume 1340 of {\em Lecture Notes in Math.}, pages
  160--174. Springer, Berlin, 1988.

\bibitem[LNT88]{MR929196}
C.-S. Lin, W.-M. Ni, and I.~Takagi.
\newblock Large amplitude stationary solutions to a chemotaxis system.
\newblock {\em J. Differential Equations}, 72(1):1--27, 1988.

\bibitem[LW08]{MR2431658}
Fanghua Lin and Changyou Wang.
\newblock {\em The analysis of harmonic maps and their heat flows}.
\newblock World Scientific Publishing Co. Pte. Ltd., Hackensack, NJ, 2008.

\bibitem[LZ99]{LiZhu}
Yanyan Li and Meijun Zhu.
\newblock Yamabe type equations on three-dimensional {R}iemannian manifolds.
\newblock {\em Commun. Contemp. Math.}, 1(1):1--50, 1999.

\bibitem[Miy15]{MR3408072}
Yasuhito Miyamoto.
\newblock Structure of the positive radial solutions for the supercritical
  {N}eumann problem {$\varepsilon^2\Delta u-u+u^p=0$} in a ball.
\newblock {\em J. Math. Sci. Univ. Tokyo}, 22(3):685--739, 2015.

\bibitem[MM02]{MR1923818}
Andrea Malchiodi and Marcelo Montenegro.
\newblock Boundary concentration phenomena for a singularly perturbed elliptic
  problem.
\newblock {\em Comm. Pure Appl. Math.}, 55(12):1507--1568, 2002.

\bibitem[MM07]{MR2296306}
Fethi Mahmoudi and Andrea Malchiodi.
\newblock Concentration on minimal submanifolds for a singularly perturbed
  {N}eumann problem.
\newblock {\em Adv. Math.}, 209(2):460--525, 2007.

\bibitem[MNW05]{MR2124160}
A.~Malchiodi, Wei-Ming Ni, and Juncheng Wei.
\newblock Multiple clustered layer solutions for semilinear {N}eumann problems
  on a ball.
\newblock {\em Ann. Inst. H. Poincar\'{e} C Anal. Non Lin\'{e}aire},
  22(2):143--163, 2005.

\bibitem[MPV09]{MichelettiPistoiaVetois}
Anna~Maria Micheletti, Angela Pistoia, and J\'er\^ome V\'etois.
\newblock Blow-up solutions for asymptotically critical elliptic equations on
  {R}iemannian manifolds.
\newblock {\em Indiana Univ. Math. J.}, 58(4):1719--1746, 2009.

\bibitem[MPV17]{MorabitoPistoiaVaira}
Filippo Morabito, Angela Pistoia, and Giusi Vaira.
\newblock Towering phenomena for the {Y}amabe equation on symmetric manifolds.
\newblock {\em Potential Anal.}, 47(1):53--102, 2017.

\bibitem[Ni83]{Ni83}
Wei~Ming Ni.
\newblock On the positive radial solutions of some semilinear elliptic
  equations on {${\bf R}^{n}$}.
\newblock {\em Appl. Math. Optim.}, 9(4):373--380, 1983.

\bibitem[Oba72]{Obata}
Morio Obata.
\newblock The conjectures on conformal transformations of {R}iemannian
  manifolds.
\newblock {\em J. Differential Geometry}, 6:247--258, 1971/72.

\bibitem[Poh65]{Pohozaev}
S.~I. Poho\v{z}aev.
\newblock On the eigenfunctions of the equation {$\Delta u+\lambda f(u)=0$}.
\newblock {\em Dokl. Akad. Nauk SSSR}, 165:36--39, 1965.

\bibitem[Pre21]{Premoselli13}
Bruno Premoselli.
\newblock A priori estimates for finite-energy sign-changing solutions
  blowing-up solutions of critical elliptic equations.
\newblock {\em arXiv:2111.02470}, 2021.

\bibitem[Pre22]{Premoselli12}
Bruno Premoselli.
\newblock Towers of bubbles for {Y}amabe-type equations and for the
  {B}r\'{e}zis-{N}irenberg problem in dimensions {$n\geq 7$}.
\newblock {\em J. Geom. Anal.}, 32(3):Paper No. 73, 65, 2022.

\bibitem[PSnT22]{PST}
Angela Pistoia, Alberto Salda\~{n}a, and Hugo Tavares.
\newblock Existence of solutions to a slightly supercritical pure neumann
  problem.
\newblock {\em arXiv:2209.02113}, 2022.

\bibitem[PV13]{PistoiaVetois}
Angela Pistoia and J\'er\^ome V\'etois.
\newblock Sign-changing bubble towers for asymptotically critical elliptic
  equations on {R}iemannian manifolds.
\newblock {\em J. Differential Equations}, 254(11):4245--4278, 2013.

\bibitem[PV15]{MR3304582}
Angela Pistoia and Giusi Vaira.
\newblock Steady states with unbounded mass of the {K}eller-{S}egel system.
\newblock {\em Proc. Roy. Soc. Edinburgh Sect. A}, 145(1):203--222, 2015.

\bibitem[PV19]{PremoselliVetois}
Bruno Premoselli and J\'{e}r\^{o}me V\'{e}tois.
\newblock Compactness of sign-changing solutions to scalar curvature-type
  equations with bounded negative part.
\newblock {\em J. Differential Equations}, 266(11):7416--7458, 2019.

\bibitem[PV22a]{PremoselliVetois3}
Bruno Premoselli and J\'{e}r\^{o}me V\'{e}tois.
\newblock Sign-changing blow-up for the yamabe equation at the lowest energy
  level.
\newblock {\em arXiv:2206.08770}, 2022.

\bibitem[PV22b]{PremoselliVetois2}
Bruno Premoselli and J{\'e}r{\^o}me V{\'e}tois.
\newblock Stability and instability results for sign-changing solutions to
  second-order critical elliptic equations.
\newblock {\em Journal de Math{\'e}matiques Pures et Appliqu{\'e}es},
  167:257--293, 2022.

\bibitem[Roba]{RobertNeumann}
Fr{\'e}d{\'e}ric Robert.
\newblock Construction and asymptotics for the green's function with neumann
  boundary condition.
\newblock
  https://iecl.univ-lorraine.fr/files/2021/04/NotesGreenNeumannRobert.pdf.

\bibitem[Robb]{RobDirichlet}
Fr{\'e}d{\'e}ric Robert.
\newblock Existence et asymptotiques optimales des fonctions de {G}reen des
  op{\'e}rateurs elliptiques d'ordre deux.
\newblock
  https://iecl.univ-lorraine.fr/files/2021/04/NotesGreenNeumannRobert.pdf.

\bibitem[RV]{RobertVetois5}
Fr{\'e}d{\'e}ric Robert and J{\'e}r{\^o}me V{\'e}tois.
\newblock Blowing-up solutions for second-order critical elliptic equations:
  the impact of the scalar curvature.
\newblock {\em Int. Math. Res. Not. IMRN}.
\newblock To appear.

\bibitem[RV13]{RobertVetois3}
Fr{\'e}d{\'e}ric Robert and J{\'e}r{\^o}me V{\'e}tois.
\newblock Sign-changing blow-up for scalar curvature type equations.
\newblock {\em Comm. Partial Differential Equations}, 38(8):1437--1465, 2013.

\bibitem[RV14]{RobertVetois}
Fr{\'e}d{\'e}ric Robert and J{\'e}r{\^o}me V{\'e}tois.
\newblock A general theorem for the construction of blowing-up solutions to
  some elliptic nonlinear equations with {L}yapunov-{S}chmidt's
  finite-dimensional reduction.
\newblock {\em Concentration Compactness and Profile Decomposition (Bangalore,
  2011), Trends in Mathematics, Springer, Basel}, pages 85--116, 2014.

\bibitem[RV15]{RobertVetois4}
Fr{\'e}d{\'e}ric Robert and J{\'e}r{\^o}me V{\'e}tois.
\newblock Sign-changing solutions to elliptic second order equations: glueing a
  peak to a degenerate critical manifold.
\newblock {\em Calc. Var. Partial Differential Equations}, 54(1):693--716,
  2015.

\bibitem[RW04]{ReyWei}
Olivier Rey and Juncheng Wei.
\newblock Blowing up solutions for an elliptic {N}eumann problem with sub- or
  supercritical nonlinearity. {I}. {$N=3$}.
\newblock {\em J. Funct. Anal.}, 212(2):472--499, 2004.

\bibitem[SnT22]{MR4400170}
Alberto Salda\~{n}a and Hugo Tavares.
\newblock On the least-energy solutions of the pure {N}eumann {L}ane-{E}mden
  equation.
\newblock {\em NoDEA Nonlinear Differential Equations Appl.}, 29(3):Paper No.
  30, 36, 2022.

\bibitem[ST11]{MR2765510}
Enrico Serra and Paolo Tilli.
\newblock Monotonicity constraints and supercritical {N}eumann problems.
\newblock {\em Ann. Inst. H. Poincar\'{e} C Anal. Non Lin\'{e}aire},
  28(1):63--74, 2011.

\bibitem[Str77]{Strauss}
Walter~A. Strauss.
\newblock Existence of solitary waves in higher dimensions.
\newblock {\em Comm. Math. Phys.}, 55(2):149--162, 1977.

\bibitem[Str84]{Struwe}
Michael Struwe.
\newblock A global compactness result for elliptic boundary value problems
  involving limiting nonlinearities.
\newblock {\em Math. Z.}, 187(4):511--517, 1984.

\bibitem[SW19]{MR3936129}
Philippe Souplet and Michael Winkler.
\newblock Blow-up profiles for the parabolic-elliptic {K}eller-{S}egel system
  in dimensions {$n\geq 3$}.
\newblock {\em Comm. Math. Phys.}, 367(2):665--681, 2019.

\bibitem[Thi16]{ThizyLinNi}
Pierre-Damien Thizy.
\newblock The {L}in-{N}i conjecture in negative geometries.
\newblock {\em J. Differential Equations}, 260(4):3658--3690, 2016.

\bibitem[Tru68]{Trudinger}
Neil~S. Trudinger.
\newblock Remarks concerning the conformal deformation of {R}iemannian
  structures on compact manifolds.
\newblock {\em Ann. Scuola Norm. Sup. Pisa (3)}, 22:265--274, 1968.

\bibitem[Win20]{MR4179771}
Michael Winkler.
\newblock Blow-up profiles and life beyond blow-up in the fully parabolic
  {K}eller-{S}egel system.
\newblock {\em J. Anal. Math.}, 141(2):585--624, 2020.

\bibitem[WWY10]{MR2645043}
Liping Wang, Juncheng Wei, and Shusen Yan.
\newblock A {N}eumann problem with critical exponent in nonconvex domains and
  {L}in-{N}i's conjecture.
\newblock {\em Trans. Amer. Math. Soc.}, 362(9):4581--4615, 2010.

\bibitem[WWY11]{MR2806101}
Liping Wang, Juncheng Wei, and Shusen Yan.
\newblock On {L}in-{N}i's conjecture in convex domains.
\newblock {\em Proc. Lond. Math. Soc. (3)}, 102(6):1099--1126, 2011.

\end{thebibliography}

\end{document}